\theoremstyle{plain}
\newtheorem{theorem}{Theorem}[section]
\newtheorem{proposition}[theorem]{Proposition}
\newtheorem{lemma}[theorem]{Lemma}
\theoremstyle{definition}
\newtheorem{definition}[theorem]{Definition}
\newtheorem{remark}[theorem]{Remark}
\newtheorem{hypothesis}[theorem]{Hypothesis}
\def\H{\mathbb{H}}
\def\V{\mathbb{V}}
\def\U{\mathbb{U}}
\def\R{\mathbb{R}}
\def\E{\mathbb{E}}
\def\P{\mathbb{P}}
\def\N{\mathbb{N}}
\def\Q{\mathrm{Q}}
\def\d{\mathrm{d}}
\def\PP{\mathrm{P}}
\def\wi{\widehat}
\begin{document}

\title{ \textbf{Existence of optimal controls for stochastic partial differential equations with fully local monotone coefficients} }
\author{
  \textbf{Gaofeng Zong}\thanks{E-mail address: zonggf@sdufe.edu.cn}\\
  \small{  School of Statistics and Mathematics,  }\\
  \small{  Shandong University of Finance and Economics, Jinan, 250014, China}\\
    }

\date{}

\maketitle

\begin{abstract}
This paper deals with a stochastic optimal feedback control problem for the controlled stochastic partial differential equations. More precisely, we establish the existence of stochastic optimal feedback control for the controlled stochastic partial differential equations with fully monotone coefficients by a minimizing sequence for the control problem. Using the Faedo-Galerkin approximations, the uniform estimates and the tightness in some appropriate space for the Faedo-Galerkin approximating solution can be obtain to prove the well-posedness of the controlled stochastic partial differential equations with fully monotone coefficients. The results obtained in the present paper may be applied to various types of controlled stochastic partial differential equations, such as the controlled stochastic convection diffusion equation.
\end{abstract}
	
\paragraph{Keywords:}
Stochastic partial differential equation; optimal feedback control; stochastic optimal control; fully local monotone.

\medskip

\noindent
{\bf AMS Subject Classification:}  60H15; 93E20; 35R60.

\section{Introduction}\label{Introduction}\setcounter{equation}{0}

 Let  $\H$ and $\V$ be a separable Hilbert space and a reflexive Banach space,  respectively, such that the embedding $\V\subset\H$ is compact. Let $\V^*$ be the dual space of $\V$ and $\H^*$($\cong\H$) be the dual space of $\H$. The norms of $\H,\V$ and $\V^*$ are denoted by $\|\cdot\|_\H,\|\cdot\|_\V $ and $\|\cdot\|_{\V^*},$ respectively, and we have  a  Gelfand triplet
 $$	\V\subset \H\subset \V^*.$$
  Let us represent $(\cdot,\cdot)$ for the inner product in $\H$ and  $\langle \cdot,\cdot\rangle,$ the duality pairing between $\V^*$ and $\V$. Also, if $u\in\H$ and $v\in\V,$ then we have $\langle u, v\rangle=	(u, v). $ Let $\U$ be an  another separable Hilbert space, and $L_2(\U,\H)$ be the space of all Hilbert-Schmidt operators from $\U$ to $\H$ with the norm $\|\cdot\|_{L^2}$ and inner product $(\cdot,\cdot)_{L^2}$.
	
Let $(\Omega,\mathcal{F}, \P)$ be a complete probability space equipped with an increasing family of normal filtration $\{\mathcal{F}_t\}_{t\geq0}$ of $\mathcal{F}$ satisfying usual condition.
  	Let $W(\cdot)$ be a cylindrical Wiener process  on  $\U$ defined on the filtered probability space $(\Omega,\mathcal{F},\{\mathcal{F}_t\}_{t\geq 0},\P)$.

Let $T>0$ be some fixed time in this paper. Consider the following controlled stochastic partial differential equation with fully monotone coefficients in Gelfand triplet:
\begin{equation}\label{2312-22eq01.1}
  \d X_\Phi(t) = [A(t, X_\Phi(t)) + \Phi(t, X_\Phi(t))] \d t + B(t, X_\Phi(t)) \d W(t),
\end{equation}
with initial value $ X_\Phi(0)= x\in \H,$
where
\begin{eqnarray*}
  A &:&  [0,T] \times \V \to \V^*,\\
   \Phi &:&  [0,T] \times \V  \to \H,\\
   B &:&  [0,T] \times \V  \to L_2(\U,\H),
\end{eqnarray*}
 are progressively measurable, $A$ satisfies fully locally monotone condition, see condition (A2) below, and the function $\Phi$ acts as a feedback control of the equation (\ref{2312-22eq01.1}).

In this paper, we will study a stochastic optimal feedback control problem for the controlled stochastic partial differential equations \eqref{2312-22eq01.1}.
For this, let us now define the cost functional
\begin{eqnarray}\label{2312-22eq1.1.1}
  && J(\Phi):= \E\bigg[ \int_0^T \Big[ f(s, X_\Phi(s)) + g(\Phi(s, X_\Phi(s))) \Big] \d s  + h(X_\Phi(T))  \bigg],
\end{eqnarray}
where the control $\Phi$ belongs to the set of the admissible controls $\mathcal{U}$.
Our control problem is to minimise the cost functional \eqref{2312-22eq1.1.1} over $\mathcal{U}$, we denote by $(\mathcal{P})$ the problem of minimising $J$ among the admissible controls. Any $\Phi^*\in \mathcal{U}$ satisfying $$J(\Phi^*)=\inf\{J(\Phi): \Phi\in \mathcal{U}\}$$ is called an optimal feedback control.
In the cost functional (\eqref{2312-22eq1.1.1}), the weak sequentially lower semicontinuous  of the  functional is necessary for our optimal feedback  control problem $(\mathcal{P})$.

To obtain the stochastic optimal feedback control of the stochastic partial differential equations is an important question in optimal control theory and has important applications in stochastic filtering theory, in risk and game theory, in physical studies, technical, industrial and economical decisions, see Fabbri et al. (2017) for more details.
Related to the stochastic optimal control of stochastic partial differential equations, there are many investigations about the existence of optimal controls,  formulations of stochastic maximum principles, and studies about the dynamic programming principles.
Many stochastic optimal control of some special stochastic partial differential equations, such as stochastic Navier-Stokes equation, stochastic reaction-diffusion equation, stochastic semilinear equation, stochastic second grad fluids, stochastic Cahn-Hilliard equation, have been investigated in Lesei (2002), Coayla-Teran et al. (2020), Cipriano (2019), Tahraoui and Cipriano (2023), respectively.   

Our main goal in the present paper consists in the study of the optimal feedback control problem for this controlled stochastic partial differential equation with fully monotone coefficients, where fully monotone means
 $$\langle A(t,x) - A(t,y), x-y \rangle \leq [C+\rho(x)+\eta(y)]\|x-y\|_\H^2,$$
for any $x,y\in \V$, where $\rho(x), \eta(y)$ are locally bounded functions on the space $\V$.
An article closely related to the present paper is Coayla-Teran et al. (2020). The authors in Coayla-Teran et al. (2020) establish the existence of optimal controls for the controlled stochastic partial differential equation with locally monotone coefficients. If we compare the locally monotone condition used in Coayla-Teran et al. (2020), the main difference is that in the fully monotone condition both measurable functions $\rho$ and  $\eta$ can be non-zero.
In fact,  in Coayla-Teran et al. (2020), the authors have imposed a necessary condition that only one of them can be non-zero, i.e., either $\rho(x)\equiv 0$ or $\eta(y)\equiv0$, see (A2) and (C3) in Coayla-Teran et al. (2020).

It should be pointed out that all the examples considered in Coayla-Teran et al. (2020) can be covered by our framework of  fully monotone, including the controlled linear stochastic evolution equation, the controlled stochastic reaction-diffusion equation, the controlled stochastic nonlocal parabolic equation, the controlled stochastic semilinear equations.
 Moreover, our main results are also applicable to the controlled stochastic 3D tamed Navier-Stokes equations, the controlled stochastic quasilinear equation,  the controlled stochastic Cahn-Hilliard equation, the controlled stochastic Allen-Cahn-Navier-Stokes system,  which are not covered by the framework of locally monotone in Coayla-Teran et al. (2020).

The proofs of the main results are mainly inspired by the methods in Lesei (2002), Coayla-Teran et al. (2020) and  R\"{o}ckner, Shang and Zhang (2024). Here we developed these methods to the stochastic partial differential equation with fully local monotone coefficient and to study the existence of optimal feedback controls of this stochastic system. Since our considered controlled stochastic partial differential equation may be nonlinear, we are dealing with a non-convex stochastic optimization problem, which is a difficult question, see the book of Ekeland and Temam (1999) for more details about convex and non-convex optimization.
In the present paper, we follow the main idea used in Lesei (2002), in which the controlled stochastic Navier-Stokes equation was studied, to demonstrate the existence of optimal controls to the controlled stochastic partial differential equation with fully monotonicity condition.
We search for an optimal control that minimizes a given cost functional by solving an auxiliary equation of the controlled stochastic partial differential equation with fully local monotone coefficient.

The well-poseness problem of the controlled stochastic partial differential equation with fully monotonicity condition is fundamentally very difficult to solve.
Motivated by an outstanding  work of R\"{o}ckner, Shang and Zhang (2024) on stochastic partial differential equation without the control term, we consider the stochastic partial differential equation (\ref{2312-22eq01.1}) controlled by $\Phi\in \mathcal{U}$ where $\mathcal{U}:= \{\Phi: [0,T]\times \mathbb{V} \to \mathbb{H}\}$ denotes the set of the admissible controls satisfying some necessary assumption, see the following section.
The study of theory of the stochastic partial differential equation using monotone method was initially introduced by Pardoux in Pardoux (1972, 1975), and then carried out by Krylov and Rozovski\u{\i} (1979), Gy\"{o}ngy (1982), Pr\'{e}v\"{o}t and R\"{o}ckner (2007), R\"{o}ckner and Wang (2008), Zhang (2009).
A significant progress along this research direction is presented by Liu and R\"{o}ckner (2010), in which the coefficients in the stochastic partial differential equation satisfy the local monotonicity conditions.
Recently, R\"{o}ckner, Shang and Zhang (2024) established the well-posedness results for a class of stochastic partial differential equation with fully locally monotone coefficients driven by multiplicative noise, and solved an open problem for almost one decade about the well-posedness of equation with fully locally monotone coefficients driven by multiplicative noise.
For more results about stochastic partial differential equation with monotone coefficients, see Liu (2011), Liu and R\"{o}ckner (2010), Liu,  R\"{o}ckner and da Silva (2021), Xiong and Zhai (2021),  Kumar and Mohan (2022a, 2022b), Pan and Shang (2022), and therein.

The process of overcoming these difficulties caused by nonlinearity and non convexity is as follows: Firstly, we establish the well-poseness of the controlled stochastic partial differential equation with fully monotonicity condition, and we borrows the ideas from Liu and R\"{o}ckner (2015), R\"{o}ckner, Shang and Zhang (2024), Kumar and Mohan (2022a) to achieve this goal.
Secondly, we prove that the set of admissible controls is weak sequentially compact and then the weak convergence of the feedback controls imply strong convergence of the corresponding solutions.

The remainder of this paper is organized as follows. In Section 2, we state some necessary assumptions on the coefficients of  the controlled stochastic partial differential equation. Section 3 establishes the well-posedness of the controlled stochastic partial differential equation with fully monotonicity condition, in this section, the uniform estimates and the tightness in some appropriate space  for the Faedo-Galerkin approximating solution are also established to achieve our goal.
In Section 4, we present some convergence results to prove the existence of optimal feedback controls.

\section{Preliminaries}\setcounter{equation}{0}

We start with  some basic definitions related to the operators in the controlled stochastic partial differential equation  (\ref{2312-22eq01.1})  with fully monotone coefficients.
\begin{definition}\label{2312-22def1}
	An operator $A$ from $\V$ to $\V^*$ is said to be  pseudo-monotone if the following condition holds:  for any sequence $\{u_n\}$ with the weak limit $u$ in $\V$ and
	\begin{align}\label{2312-22eq02.1}
		\liminf_{n\to\infty}		\langle A(u_n),u_n-u\rangle\geq 0,
	\end{align}imply that
	\begin{align}\label{3.3}
		\limsup_{n\to\infty}	\langle A(u_n),u_n-v\rangle \leq \langle A(u),u-v \rangle, \ \text{ for all } \ v\in\V.
	\end{align}
\end{definition}

We consider the following assumptions on the coefficients $A$, $\Phi$, and $B$  in the controlled stochastic partial differential equation  (\ref{2312-22eq01.1}).
Let $f_A\in L^1(0,T;\mathbb{R}_+)$.  For the  operator $A$, we will assume:
\begin{hypothesis}\label{2312-22hypo1}
Let  $\beta\in (1,\infty)$.
	\begin{itemize}
	\item[(A.1)] (Hemi-continuity). The map $\mathbb{R}\ni\lambda \mapsto \langle A(t,u+\lambda v),w\rangle \in\R$ is continuous for any $u,v,w\in \V$ and for a.e. $t\in[0,T]$.
	\item[(A.2)] (Local monotonicity).  There exist  non-negative constants $\zeta$ and $C$ such that for any $u,v\in\V$ and a.e. $t\in[0,T]$,
	\begin{equation}\label{2312-22eq3.5}
2\langle A(t,u)-A(t,v),u-v\rangle    \leq \big[f_A(t)+\rho(u)+\eta(v)\big]\|u-v\|_{\H}^2,
	\end{equation}
   where $\rho$ and $\eta$ are two measurable functions from $\V$ to $\R$ satisfy $|\rho(u)|+|\eta(u)| \leq C(1+\|u\|_{V}^\beta)(1+\|u\|_{\H}^\zeta)$.
\item[(A.2)$'$] (General local monotonicity). For any constant $r>0$, there exists a function $M_{\cdot}(r)\in L^1(0,T;\R_+)$  such that for any $\|u\|_\V\vee \| v\|_\V\leq r$ and a.e. $t\in[0,T]$,
\begin{align}\label{2312-22eq3.6}
	\langle A(t,u)-A(t,v),u-v\rangle \leq M_t(r)\|u-v\|_{\H}^2.
\end{align}	
	\item[(A.3)]	(Coercivity). There exists a positive constant $C$ such that for any $u\in\V$ and a.e.  $t\in[0,T]$,
	\begin{align}\label{2312-22eq3.7}
		2\langle A(t,u),u\rangle
		\leq f_A(t)(1+\|u\|_{\H}^2)-C\|u\|_{\V}^\beta.
	\end{align}
\item[(A.4)] (Growth). There exist non-negative constants  $\alpha$ and $C$ such that for any $u\in\V$ and a.e. $t\in[0,T]$,
\begin{align}\label{2312-22eq3.8}
	\|A(t,u)\|_{\V^*}^{\frac{\beta}{\beta-1}}\leq (f_A(t)+C\|u\|_{\V}^\beta)(1+\|u\|_{\H}^\alpha).
\end{align}
	\end{itemize}
\end{hypothesis}

\begin{remark}
  In R\"{o}ckner, Shang and Zhang (2024), the authors introduced a general local monotonicity condition, which is weaker than (A.2) in Hypothesis  \ref{2312-22hypo1}, and then established the existence of the probabilistic strong solution for the stochastic partial differential equations without control terms. In present paper, we don't intend to use this weaker condition as our main focus is on analyzing control problems.
\end{remark}

For the  operator  $B$, we assume as follows.
\begin{hypothesis}\label{2312-22hypo2}
Let $g_B\in L^1(0,T;\R_+)$.
\begin{itemize}
\item[(B.1)] Let $f_A, \rho, \eta$ given as in (A.2),
  $$ \|B(t,u)-B(t,v)\|_{L_2}^2
		 \leq \big[f_A(t)+\rho(u)+\eta(v)\big]\|u-v\|_{\H}^2.$$
\item[(B.2)] For any sequence $\{u_m\}_{m=1}^\infty$ and $u$ in $\V$ with $\|u_m-u\|_{\H}\to 0$ as $m\to \infty$, we have
 \begin{align}\label{2312-22eq3.9}
	\|B(t,u_m)-B(t,u)\|_{L_2} \to 0,\ \text{ for a.e. } \ t\in[0,T].
\end{align}
\item[(B.3)]For any $p\geq 2$, any $u\in \V$ and a.e. $t\in[0,T]$,
\begin{align}\label{2312-22eq3.10}
	\|B(t,u)\|_{L_2}^p  \leq g_B(t)(1+\|u\|_{\H}^p).
\end{align}
	\end{itemize}
\end{hypothesis}

\begin{remark}
  In R\"{o}ckner, Shang and Zhang (2024), the authors established the uniqueness of the probabilistic strong solution for the stochastic partial differential equations without control terms under the condition (B.2) in Hypothesis  \ref{2312-22hypo2}.
However, condition (B.1) is also essential for our control problems, so for simplicity, in present paper we use condition (B.1).
The growth condition  (B.3) in Hypothesis  \ref{2312-22hypo2} is stronger than the growth condition  (H5) in R\"{o}ckner, Shang and Zhang (2024), since this growth condition  (B.3) is required in our analysis of auxiliary control processes, however, we believe that this is not an essential change.
\end{remark}

We will assume that the control coefficient $\Phi$ in the set of the admissible controls $\mathcal{U}$ for the controlled stochastic partial differential equation satisfies the following conditions: 
\begin{hypothesis}\label{2312-22hypo3}
Let $f_\Phi\in L^1(0,T; \mathbb{R}^+)$,
  \begin{itemize}
\item[(C.1)] (Lipschitz continuity). for some positive constant $\eta$, $\|\Phi(0,0)\|^2\leq \eta,$ and  for all $s,t\in [0,T]$, $x,y\in \mathbb{H}$,
    \begin{equation}\label{2312-25eq2.2}
      \|\Phi(t,x)- \Phi(s,y)\|^2\leq \lambda |t-s|^2 + \alpha\|x-y\|^2, 
    \end{equation}
    where $\lambda,\alpha$ are positive constants.
   \item[(C.2)] (Local monotonicity). There exist non-negative constants $\zeta$ and $C$ such that for any $u,v\in \mathbb{H}$ and a.e. $t\in[0,T]$,
    \begin{equation}\label{2312-25eq1.2.3}
      2( \Phi(t, u)- \Phi(t,v), u-v ) \leq  f_\Phi(t)  \|u-v\|_{\mathbb{H}}^2,
    \end{equation}
    \item[(C.3)] (Coercivity).  There exists a positive constant $C$ such that for any $u\in \mathbb{H}$ and a.e. $t\in[0,T]$,
    \begin{equation}\label{2312-25eq1.2.4}
      2( \Phi(t, u), u ) \leq f_\Phi(t)( 1+ \|u\|_{\mathbb{H}}^2).
    \end{equation}
    \item[(C.4)] (Growth).  There exists non-negative constants $\alpha$ and $C$ such that for any $p\geq 2$,  $u\in \mathbb{H}$ and a.e. $t\in[0,T]$,
    \begin{equation}\label{2312-25eq1.2.5}
      \| \Phi(t, u)\|_{\mathbb{H}}^p \leq C(f_\Phi(t) +\|u\|_{\mathbb{H}}^p).
    \end{equation}
  \end{itemize}
\end{hypothesis}

Furthermore, we will assume that the coefficients in the cost functional satisfy the following conditions.
Whenever the integral in the cost functional \eqref{2312-22eq1.1.1} exist and are finite, we will assume that the maps involved in \eqref{2312-22eq1.1.1} satisfy:
\begin{hypothesis}\label{2312-22hypo14}
\begin{itemize}
\item[(F.1)]   $f: [0,T]\times \V\to \R_+$, $g:\H\to \R_+$, and $h:\V\to \R_+$.
\item[(F.2)]  The mappings $g$ and $u\in L^2(0,T; \H) \mapsto \int_0^T g(u(s)) \d s$ are weak sequentially lower semi-continuous.
\item[(F.3)]  The mappings $f, h$ and $X\in L^2(0,T; \V) \mapsto \int_0^T f(s,X(s)) \d s$ are  sequentially lower semi-continuous.
	\end{itemize}
\end{hypothesis}

\begin{remark}\label{2312-22rem4}
The sequentially lower semi-continuous for $f$ implies that
$$X_n\to X ~ \text{ strongly in $\V$ }  \Rightarrow  f(X)\leq \lim_{n\to\infty}\inf f(X_n). $$
However,	the weak sequentially lower semi-continuous for $g$ implies that
$$u_n\rightharpoonup u ~ \text{ weakly in $\H$ }  \Rightarrow  g(u)\leq \lim_{n\to\infty}\inf g(u_n). $$
\end{remark}

\section{Well-posedness results}\setcounter{equation}{0}

Let us now provide the definition of probabilistically strong solution for the controlled stochastic partial differential equation (\ref{2312-22eq01.1}).

\begin{definition}\label{2312-25def2.1}
  A   \emph{probabilistically weak solution} to the equation  (\ref{2312-22eq01.1})  is a system $$((\wi{\Omega},\wi{\mathcal{F}},\{\wi{\mathcal{F}}_t\}_{t\geq0},\wi{\P}),\wi{X},\wi{W}),$$ where
	\begin{itemize}
		\item[(1)] $(\wi{\Omega},\wi{\mathcal{F}},\{\wi{\mathcal{F}}_t\}_{t\geq0},\wi{\P})$ is a filtered probability space with the filteration $\{\wi{\mathcal{F}}_t\}_{t\geq0}$,
		\item[(2)] $\wi{W}$ is a $\U$-cylindrical Wiener process on $(\wi{\Omega},\wi{\mathcal{F}},\{\wi{\mathcal{F}}_t\}_{t\geq0},\wi{\P})$,
		\item[(3)] $\wi{X}:[0,T]\times \wi{\Omega} \to \H$ is a predictable process with $\wi{\P}$-a.s., paths
		\begin{align*}
			\wi{X}(\cdot,\omega)\in \mathrm{C}([0,T];\H)\cap L^\beta(0,T;\V),
		\end{align*}such that for all $t\in[0,T]$ and  for all $v\in\V$, the following holds:
		\begin{align}\label{3.12}\nonumber
		(\wi{X}(t),v)&=(x,v) +\int_0^t\langle A(s,\wi{X}(s)),v\rangle \d s   +\int_0^t  ( \Phi(s,\wi{X}(s)),v) \d s  \\
                      & \ + \int_0^t(B(s,\wi{X}(s))\d\wi{W}(s),v), \     \wi{\P}\text{-a.s.}
		\end{align}
	\end{itemize}
\end{definition}

Let us provide the definition of pathwise strong probabilistic (analytically weak in PDE theory) solution for the controlled stochastic partial differential equation (\ref{2312-22eq01.1}).
\begin{definition}
	We are given a stochastic basis  $((\Omega,\mathcal{F},\{\mathcal{F}_t\}_{t\geq0}, \P),X, W)$ and  $x\in\H$.
Then, equation (\ref{2312-22eq01.1}) has a   \emph{pathwise probabilistically strong solution}  if and only if there exists a progressively measurable process $X_\Phi:[0,T]\times \Omega\to \H$ with $\P$-a.s., paths
 \begin{align*}
		X_\Phi(\cdot,\omega) \in \mathrm{C}([0,T]; \H)\cap L^\beta(0,T;\V),
	\end{align*}
and the following equation
\begin{align*}
		(X_\Phi(t), v)&=(x, v) +\int_0^t\langle A(s,X_\Phi(s)), v\rangle \d s  +\int_0^t ( \Phi(s,X_\Phi(s)), v) \d s \\
                 &   + \int_0^t(B(s, X_\Phi(s))\d W(s), v), \; \text{ for all } v\in\V,
\end{align*}
holds $\P$-a.s., for all $t\in[0,T]$.
\end{definition}

\begin{definition}
\begin{description}
  \item[(1)] For $i=1,2$, let us consider $X_i$ be any solution on the stochastic basis $$((\Omega,\mathcal{F},\mathcal{F}_{t\geq0}, \P), X_i, W)$$  to the equation (\ref{2312-22eq01.1}) with $X_i(0)=x$. Then, the solutions of the equation (\ref{2312-22eq01.1}) are pathwise unique if and only if
	\begin{align*}
		\P\big\{X_1(t)= X_2(t),\text{ for all } t\geq 0\big\}=1.
	\end{align*}
  \item[(2)] We say that solutions of the equation (\ref{2312-22eq01.1}) are unique in law if and only if the following holds:
	 If $((\Omega_i,\mathcal{F}_t,\{\mathcal{F}_i\}_{t\geq0}, \P_i), X_i, W_i)$ for $i=1,2$ are two  solutions to the equation (\ref{2312-22eq01.1}) with $X_i(0)=x$, for $i=1,2$, then $\mathcal{L}_{P_1}(X_1)=\mathcal{L}_{P_2}(X_2)$.
\end{description}
\end{definition}

Here, Pathwise uniqueness means indistinguishability. The well-known Yamada-Watanabe theorem  states that weak existence and pathwise uniqueness imply strong existence and weak uniqueness.

In R\"{o}ckner, Shang, and Zhang (2024), the results on the existence of probabilistically weak and strong solution of the equation (\ref{2312-22eq01.1}) without control hold.

\begin{theorem}\label{2312-22TH2.11}
  Assume that the embedding $\V\subset \H$ is compact and Hypothesis \ref{2312-22hypo1} (A.1)-(A.4), Hypothesis \ref{2312-22hypo2} (B.1)-(B.3) and Hypothesis \ref{2312-22hypo3} (C.1)-(C.4) hold. Then, for any initial data $x\in\H$, there exists a probabilistic weak solution $X_\Phi$ to the equation (\ref{2312-22eq01.1}). furthermore, for any $p\geq 2$, the following estimate holds:
  \begin{equation}\label{2312-25eq2.13}
    \E \Big[  \sup_{t\in [0,T]} \|X_\Phi(t)\|_\H^p   + \Big( \int_0^t \|X_\Phi(t)\|_\V^\beta \d t\Big)^{\frac{p}{2}} \Big]  < C(1+\|x\|_{\H}^p).
  \end{equation}
  Moreover, if Hypothesis 2.4 (H.2) holds, then solution of the  equation (\ref{2312-22eq01.1}) is pathwise unique and hence there exists a unique probabilistic strong solution to the  equation (\ref{2312-22eq01.1}) with the initial data $x\in \H$.
\end{theorem}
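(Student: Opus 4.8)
\section*{Proof proposal for Theorem \ref{2312-22TH2.11}}

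The plan is to combine a Faedo--Galerkin approximation with a tightness and Skorokhod representation argument, since the monotonicity of $A$ is only \emph{fully local} and the classical monotone-operator passage to the limit is not directly available. First I would fix an orthonormal basis $\{e_k\}_{k\ge1}\subset\V$ of $\H$, set $H_n:=\mathrm{span}\{e_1,\dots,e_n\}$ with orthogonal projections $P_n$, and solve the finite-dimensional It\^o equation
\[
\d X_n(t)=P_n\big[A(t,X_n(t))+\Phi(t,X_n(t))\big]\,\d t+P_nB(t,X_n(t))\,\d W_n(t),\qquad X_n(0)=P_nx,
\]
where $W_n$ denotes the projection of $W$ onto $n$ coordinates. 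Hemicontinuity (A.1) (which on $H_n$ gives continuity), the local monotonicity (A.2) (which on $\V$-balls reduces to the one-sided bound (A.2)$'$ because $\V\subset\H$ continuously), coercivity (A.3), together with (C.1), (C.3) and the growth (B.3), yield, by the standard theory of finite-dimensional SDEs with monotone drift, a unique global solution $X_n$.

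Next I would derive the uniform a priori bounds. Applying It\^o's formula to $\|X_n(t)\|_\H^p$, using (A.3) to absorb $2\langle A(s,X_n),X_n\rangle$ into $f_A(s)(1+\|X_n\|_\H^2)-C\|X_n\|_\V^\beta$, (C.3) for the control term, (B.3) for the quadratic variation, the Burkholder--Davis--Gundy inequality for the martingale part and Gr\"onwall's lemma, I obtain
\[
\sup_n\E\Big[\sup_{t\in[0,T]}\|X_n(t)\|_\H^p+\Big(\int_0^T\|X_n(s)\|_\V^\beta\,\d s\Big)^{p/2}\Big]\le C(1+\|x\|_\H^p),
\]
which is \eqref{2312-25eq2.13} at the Galerkin level; with the growth bound (A.4) this controls $\int_0^\cdot A(s,X_n(s))\,\d s$ in $L^{\beta/(\beta-1)}(0,T;\V^*)$ and, via BDG, the noise term in $C^\gamma([0,T];\H)$ for small $\gamma$, hence equicontinuity in a negative norm. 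Since $\V\subset\H$ is compact, an Aubin--Lions/Dubinskii-type compactness criterion shows the laws of $\{X_n\}$ are tight on $Z:=L^\beta(0,T;\H)\cap C([0,T];\V^*)$, enriched with the weak topology of $L^\beta(0,T;\V)$ and the weak-$*$ topology of $L^\infty(0,T;\H)$, while the laws of $W_n$ converge on $C([0,T];\U_0)$ for a Hilbert space $\U_0$ into which $\U$ embeds Hilbert--Schmidt.

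By Skorokhod's theorem I pass to a new stochastic basis carrying $(\wi X_n,\wi W_n)\to(\wi X,\wi W)$ a.s.\ in $Z\times C([0,T];\U_0)$ with $\wi X_n$ solving the same equation; the a priori bounds persist and Fatou gives \eqref{2312-25eq2.13} for $\wi X$. The strong convergence $\wi X_n\to\wi X$ in $L^\beta(0,T;\H)$, whence $\wi X_n(t)\to\wi X(t)$ in $\H$ for a.e.\ $t$ along a subsequence, is what tames the nonlinear terms: the control term is handled by (C.1), and the stochastic integral is identified, $\int_0^\cdot P_nB(s,\wi X_n)\,\d\wi W_n\to\int_0^\cdot B(s,\wi X)\,\d\wi W$, using demicontinuity (B.2), (B.3) and the uniform moments. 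The delicate step is the drift: letting $\wi A$ be the weak limit of $A(\cdot,\wi X_n)$ in $L^{\beta/(\beta-1)}(\wi\Omega\times(0,T);\V^*)$, one passes to the limit in the linear terms to see that $\wi X$ satisfies \eqref{3.12} with $\wi A$ in place of $A(s,\wi X(s))$, and then, comparing It\^o's formula for $\|\wi X_n(t)\|_\H^2$ with that for $\|\wi X(t)\|_\H^2$, one proves $\limsup_{n\to\infty}\E\int_0^T\langle A(s,\wi X_n(s)),\wi X_n(s)-\wi X(s)\rangle\,\d s\le0$. Here the \emph{fully} local monotonicity (A.2) enters decisively: the error $[f_A(s)+\rho(\wi X_n)+\eta(\wi X)]\|\wi X_n-\wi X\|_\H^2$ is controlled by combining the strong $\H$-convergence with the growth $|\rho|+|\eta|\le C(1+\|\cdot\|_\V^\beta)(1+\|\cdot\|_\H^\zeta)$ and the uniform estimates, so that both $\rho$ and $\eta$ being non-zero causes no difficulty. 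Pseudo-monotonicity of $A(t,\cdot)$ (a consequence of (A.1)--(A.4), as in R\"ockner--Shang--Zhang (2024)) then yields $\wi A(\cdot)=A(\cdot,\wi X(\cdot))$, so that $((\wi\Omega,\wi{\mathcal F},\{\wi{\mathcal F}_t\},\wi\P),\wi X,\wi W)$ is a probabilistically weak solution.

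Finally, for pathwise uniqueness, under the additional monotonicity hypothesis of the statement I take two solutions $X_1,X_2$ on the same basis with $X_1(0)=X_2(0)=x$ and apply It\^o's formula to $\|X_1(t)-X_2(t)\|_\H^2$. The drift contribution is bounded via (A.2) and (C.2) by $\int_0^t[f_A(s)+\rho(X_1(s))+\eta(X_2(s))+f_\Phi(s)]\|X_1-X_2\|_\H^2\,\d s$, and (B.1) handles the It\^o correction and, after BDG, the martingale; since $X_i\in C([0,T];\H)\cap L^\beta(0,T;\V)$ a.s., the growth bound forces $\rho(X_1),\eta(X_2)\in L^1(0,T)$ a.s., so a stochastic Gr\"onwall inequality, localised by $\tau_R=\inf\{t:\|X_1(t)\|_\V\vee\|X_2(t)\|_\V>R\}$ (with $\tau_R\uparrow T$ a.s.\ by \eqref{2312-25eq2.13}) and run against the exponential weight $\exp\big(-\int_0^\cdot(\cdots)\,\d s\big)$, gives $X_1\equiv X_2$. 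Weak existence and pathwise uniqueness then yield, by the Yamada--Watanabe theorem, a unique probabilistically strong solution and uniqueness in law. I expect the drift identification in the Skorokhod limit --- reconciling the merely pseudo-monotone, fully locally monotone $A$ with its weak limit while tracking the low integrability of $\rho,\eta$ --- to be the main obstacle; the uniqueness step is a comparatively routine localisation.
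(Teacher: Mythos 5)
Your proposal is correct and follows essentially the same route as the paper: Faedo--Galerkin approximation, uniform $p$-th moment and $L^\beta(0,T;\V)$ estimates via It\^o's formula, (A.3), (C.3), (B.3), BDG and Gr\"onwall, tightness on $L^\beta(0,T;\H)\cap C([0,T];\V^*)$, Skorokhod representation, identification of $B$, $\Phi$ and (via the energy comparison of It\^o expansions and the pseudo-monotonicity result of R\"ockner--Shang--Zhang) of the drift limit $\wi A=A(\cdot,\wi X)$, then pathwise uniqueness with the exponential weight $\exp(-\int_0^\cdot[f_A+\rho(X_1)+\eta(X_2)+f_\Phi]\,\d s)$ and Yamada--Watanabe. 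The only cosmetic difference is that you invoke an Aubin--Lions/Dubinskii-type compactness argument where the paper verifies a time-increment criterion plus the Aldous condition, but both rest on the same a priori bounds and the compact embedding $\V\subset\H$.
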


In order to obtain the existence of a probabilistically weak solution, we start with the construction of an approximating solutions using a Faedo-Galerkin approximation and then we prove the tightness of laws of the approximating solutions in the approximating space.

Let $\{e_j\}_{j=1}^\infty\subset \V$ be an orthonormal  basis of $\H$. Let  $\H_m$  be a finite dimensional space  spanned by $\{e_1,e_2,\ldots,e_m\}$.  We define a projection $\PP_m:\V^*\to \H_m$  by
\begin{align}\label{2312-25eq3.15}
	\PP_m u :=\sum_{j=1}^{m}\langle u,e_j\rangle e_j.
\end{align}
 Clearly, the restriction of this projection denoted by $\PP_m\big|_\H$ is just the orthogonal projection of $\H$ onto $\H_m$. Since $\{\phi_j\}_{j=1}^{\infty}$ is an orthonormal basis of the Hilbert space $\U$, let us set
\begin{align}\label{2312-25eq3.16}
	W_m(t)=\Q_m W(t)=\sum_{j=1}^{m}\langle W(t),\phi_j\rangle \phi_j,
\end{align}
where $\Q_m$ is the orthogonal projection onto $\text{span}\{\phi_1,\phi_2,\ldots,\phi_m\}$ in $\U$.

Now, we consider  the following stochastic differential equation in the finite-dimensional  space $\H_m$, for any $m\geq 1$
\begin{eqnarray}\label{2312-25eq3.17}
	Y_m(t)
 &=& \PP_m x+\int_{0}^{t}\PP_m A(s, Y_m(s))\d s  +\int_{0}^{t}\PP_m \Phi(s, Y_m(s))\Q_m\d s     \nonumber \\
 && \quad  +\int_{0}^{t}\PP_m B(s, Y_m(s))\Q_m\d W(s) .
\end{eqnarray}
The existence and uniqueness of the strong solution of finite dimensional system has been discussed in Theorem 1, Gy\"ongy (1982).
 We have the following uniform energy estimate for $\{Y_m\}$.

\begin{lemma}\label{2312-25lem1}
	For any $p\geq 2$, there exists a constant $C_p>0$ such that
\begin{align}\label{2312-25eq3.18}		
		\sup_{m\in\mathbb{N}}\left\{\E\bigg[\sup_{0\leq t\leq T}\|Y_m(t)\|_{\H}^p\bigg]+\E\bigg[\bigg(\int_{0}^{T}\|Y_m(t)\|_{\V}^\beta\d t\bigg)^{\frac{p}{2}}\bigg]\right\}\leq  C_p(1+\|x\|_{\H}^p).
\end{align}
\end{lemma}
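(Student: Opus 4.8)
The plan is to run the standard Galerkin energy estimate, organised so that the dissipative term produced by coercivity is kept on the left and directly yields the $L^\beta(0,T;\V)$ bound. Since the coefficients $A,\Phi,B$ are only locally bounded, the stochastic integral in \eqref{2312-25eq3.17} need not be a genuine martingale a priori, so I would first localise: with $\tau_R^m:=\inf\{t\in[0,T]:\|Y_m(t)\|_\H\ge R\}$ (and $\tau_R^m=T$ on the empty set), all the computations below are performed on $[0,t\wedge\tau_R^m]$, where every quantity is finite (on $\H_m$ the $\V$- and $\H$-norms are equivalent, so $\int_0^{\cdot\wedge\tau_R^m}\|Y_m\|_\V^\beta$ is bounded) and the stochastic integral is a martingale. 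Since $Y_m$ has continuous paths, $\tau_R^m\uparrow T$ a.s., so once the bound below is uniform in $R$, Fatou's lemma gives \eqref{2312-25eq3.18}; I suppress the localisation in the sketch.

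The first step is to apply the finite-dimensional Itô formula to $s\mapsto\|Y_m(s)\|_\H^2$ along \eqref{2312-25eq3.17}. Because $Y_m(s)\in\H_m$, the projections are harmless: $\langle\PP_m A(s,Y_m(s)),Y_m(s)\rangle=\langle A(s,Y_m(s)),Y_m(s)\rangle$, $(\PP_m\Phi(s,Y_m(s)),Y_m(s))=(\Phi(s,Y_m(s)),Y_m(s))$, and $\|\PP_m B(s,Y_m(s))\Q_m\|_{L_2}\le\|B(s,Y_m(s))\|_{L_2}$. Inserting the coercivity bounds (A.3) and (C.3) and the growth bound (B.3) with $p=2$, the three forcing terms combine into $h(s)(1+\|Y_m(s)\|_\H^2)$ with $h:=f_A+f_\Phi+g_B\in L^1(0,T;\R_+)$, while the dissipative contribution $-C\|Y_m(s)\|_\V^\beta$ moves to the left. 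Using that the resulting time-integrals are non-decreasing in $s$ and $\|Y_m(s)\|_\H^2\ge0$, a supremum over $s\le t$ gives the pathwise inequality
\begin{align*}
\sup_{s\le t}\|Y_m(s)\|_\H^2 + C\int_0^t\|Y_m(s)\|_\V^\beta\,\d s \le C\Big(\|x\|_\H^2 + \int_0^t h(s)\big(1+\|Y_m(s)\|_\H^2\big)\,\d s + \sup_{s\le t}|M_m(s)|\Big),
\end{align*}
where $M_m(t)=2\int_0^t\big(Y_m(s),\PP_m B(s,Y_m(s))\Q_m\,\d W(s)\big)$.

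Next I would raise this to the power $p/2$, use $(a+b+c)^{p/2}\le C_p(a^{p/2}+b^{p/2}+c^{p/2})$ together with Hölder in time for the drift term, and take expectations; the only term needing attention is the martingale one, for which the Burkholder--Davis--Gundy inequality and (B.3) give
\begin{align*}
\E\Big[\sup_{s\le t}|M_m(s)|^{p/2}\Big]\le C_p\,\E\Big[\Big(\sup_{s\le t}\big(1+\|Y_m(s)\|_\H^2\big)\Big)^{p/4}\Big(\int_0^t g_B(s)\big(1+\|Y_m(s)\|_\H^2\big)\,\d s\Big)^{p/4}\Big].
\end{align*}
Young's inequality then absorbs a small multiple of $\E[\sup_{s\le t}(1+\|Y_m\|_\H^2)^{p/2}]$ into the left-hand side, leaving a term bounded by $C_p\int_0^t g_B(s)\big(1+\E[\sup_{r\le s}\|Y_m(r)\|_\H^p]\big)\,\d s$. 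Writing $\psi_m(t)$ for the expectation of the $(p/2)$-power of the left-hand side of the displayed pathwise inequality and using $\|\PP_m x\|_\H\le\|x\|_\H$, this produces $\psi_m(t)\le C_p(1+\|x\|_\H^p)+C_p\int_0^t(h(s)+g_B(s))\big(1+\psi_m(s)\big)\,\d s$, and the integral form of Gronwall's lemma (valid since $h+g_B\in L^1(0,T)$) bounds $\psi_m(t)$ by $C_p(1+\|x\|_\H^p)$ uniformly in $m$, $R$ and $t$, which is \eqref{2312-25eq3.18}.

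The argument is otherwise routine; the points that need care are the localisation — required to make the $p$-th moments finite and the stochastic integral a martingale before the coefficients are known to be integrable along the solution — and the Young-inequality balancing, which must be carried out so that the $\E[\sup_{s\le t}\|Y_m\|_\H^p]$ generated by Burkholder--Davis--Gundy is reabsorbed with a constant depending only on $p$, not on $m$ or the dimension. Retaining the coercive $\|Y_m\|_\V^\beta$-term on the left throughout is exactly what makes the second term of \eqref{2312-25eq3.18} appear at no additional cost.
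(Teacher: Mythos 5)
Your proof is correct, and its overall architecture (localisation by stopping times, coercivity to keep the $\V$-dissipation on the left, BDG plus Young absorption, Gronwall with an $L^1$ kernel, then Fatou) is the same as the paper's. The genuine difference is where the $p$-th power enters: the paper applies It\^o's formula twice, first to $\|Y_m\|_\H^p$ --- which yields $\E[\sup_t\|Y_m(t)\|_\H^p]$ together with the weighted term $\E\int_0^T\|Y_m\|_\V^\beta\|Y_m\|_\H^{p-2}\,\d s$ but \emph{not} the bound on $\E[(\int_0^T\|Y_m\|_\V^\beta\,\d t)^{p/2}]$ --- and then a second time to $\|Y_m\|_\H^2$, raising that identity to the power $p/2$ and feeding in the first-step sup-bound to control the $\V$-integral. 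You instead apply It\^o only to $\|Y_m\|_\H^2$, take the pathwise supremum keeping $C\int_0^t\|Y_m\|_\V^\beta\,\d s$ on the left, and raise the whole inequality to the power $p/2$ before taking expectations; BDG is then used on $\sup_s|M_m(s)|^{p/2}$ rather than on the stochastic integral appearing inside the $p$-power It\^o expansion. This single-pass variant delivers both terms of \eqref{2312-25eq3.18} from one Gronwall argument and is arguably tidier; what it gives up is the intermediate mixed estimate $\E\int_0^T\|Y_m\|_\V^\beta\|Y_m\|_\H^{p-2}\,\d s$, which the paper's first pass produces as a by-product (though that quantity is not needed for the statement of the lemma). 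Your explicit justification of the localisation --- finiteness of the localized moments via norm equivalence on $\H_m$ and martingality of the localized stochastic integral --- is a point the paper passes over silently, and the H\"older-in-time step with respect to the measure $h(s)\,\d s$ (using only $h\in L^1(0,T)$) is carried out correctly, so no gap remains.
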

\begin{proof}
%
	Applying It\^o's formula to the real-valued process $\|Y_m(\cdot)\|_{\H}^p$, we find
\begin{align}\label{2312-25eq3.20}
\|Y_m(t)\|_{\H}^p &= \|\PP_m x\|_{\H}^p + p \int_{0}^{t}\|Y_m(s)\|_{\H}^{p-2} \langle A(s,Y_m(s)),Y_m(s)\rangle  \d s\nonumber\\
&\quad +\frac{p}{2}\int_{0}^{t}\|Y_m(s)\|_{\H}^{p-2}\big[2( \Phi(s,Y_m(s)),Y_m(s))
           +\|\PP_m B(s, Y_m(s))\Q_m\|_{L_2}^2\big]\d s \nonumber\\
& \quad +\frac{p(p-2)}{2}\int_{0}^{t}\|Y_m(s)\|_{\H}^{p-4}\|Y_m(s)\circ \PP_m B(s, Y_m(s))\Q_m\|_{\U}^2\d s  \nonumber\\
&\quad +p\int_{0}^{t}\|Y_m(s)\|_{\H}^{p-2}(  B(s, Y_m(s))\Q_m\d W(s), Y_m(s)).
\end{align}
Using (A.3) in Hypothesis \ref{2312-22hypo1}, (B.3)  in Hypothesis \ref{2312-22hypo2}, and (C.3)  in Hypothesis \ref{2312-22hypo3}, for \eqref{2312-25eq3.20}, we obtain
\begin{align}\label{2312-25eq3.020}
&\|Y_m(t)\|_\H^p  +\frac{pC}{2}\int_0^t\|Y_m(s)\|_\V^\beta\|Y_m(s)\|_\H^{p-2}\d s \nonumber\\
\leq &\|\PP_m x\|_\H^p +  C_p  \int_{0}^{t}[f_A(s) +f_\Phi(s) +g_B(s)](1+ \|Y_m(s)\|_\H^2)\|Y_m(s)\|_\H^{p-2} \d s    \nonumber \\
& 	+ p \int_0^t  \|Y_m(s)\|_\H^{p-2}\big( B(s,Y_m(s))\Q_m\d W(s),Y_m(s)\big).
\end{align}

Due to
\begin{eqnarray}\label{2312-25eq3.1020}
    &&   \int_{0}^{t}[f_A(s) +f_\Phi(s) +g_B(s)](1+ \|Y_m(s)\|_\H^2)\|Y_m(s)\|_\H^{p-2} \d s  \nonumber \\
   &\leq&     \int_{0}^{t}\big([f_A(s) +f_\Phi(s) +g_B(s)]  \|Y_m(s)\|_\H^{p}  + [f_A(s) +f_\Phi(s) +g_B(s)](1+ \|Y_m(s)\|_\H^{p})  \big)\d s   \nonumber \\
   &\leq&     \int_{0}^{t} [f_A(s) +f_\Phi(s) +g_B(s)] \d s  +  2 \int_{0}^{t} [f_A(s) +f_\Phi(s) +g_B(s)] \|Y_m(s)\|_\H^{p} \d s,
\end{eqnarray}
substituting \eqref{2312-25eq3.1020} in \eqref{2312-25eq3.020}, we obtain
\begin{eqnarray}\label{2312-25eq3.2020}
&&\|Y_m(t)\|_\H^p  +\frac{pC}{2}\int_0^t\|Y_m(s)\|_\V^\beta\|Y_m(s)\|_\H^{p-2}\d s \nonumber\\
&\leq &\|\PP_m x\|_\H^p   + C \int_{0}^{t} F(s) \d s  + C\int_{0}^{t} F(s) \|Y_m(s)\|_\H^{p} \d s   \nonumber \\
&& 	+ p \int_0^t  \|Y_m(s)\|_\H^{p-2}\big(  B(s,Y_m(s))\Q_m\d W(s),Y_m(s)\big),
\end{eqnarray}
where
\begin{equation}\label{2312-25eq3.3020}
F(s) := f_A(s) +f_\Phi(s) +g_B(s).
\end{equation}

Define a sequence of stopping times as follows
$$\tau_N^m := T\wedge\inf\{  t\geq 0: \|Y_m(t)\|_H  >N\}.$$
Then $\tau_N^m\to T,$ $\P$-a.s. as $N\to \infty$ for every $m$.
Taking the supremum over $t\leq r\wedge \tau_N^m$ where for any $r\in[0,T]$ and then taking expectations on both sides of the above inequality \eqref{2312-25eq3.2020} yield
\begin{align}\label{2312-25eq3.22}
  &\E\bigg[\sup_{t\in[0,r\wedge \tau_N^m]}\|Y_m(t)\|_\H^p\bigg]+\frac{pC}{2}\E\bigg[\int_{0}^{r\wedge \tau_N^m}\|Y_m(s)\|_{\V}^\beta\|Y_m(s)\|_\H^{p-2}\d s\bigg]  \nonumber \\
\leq & \|x\|_\H^p+C\int_{0}^{t}F(s)\d s+C\E\bigg[\int_{0}^{r\wedge \tau_N^m}F(s)\|Y_m(s)\|_\H^p\d s\bigg]  \nonumber \\
     &   +p\E\bigg[\sup_{t\in[0,r\wedge \tau_N^m]}\bigg|\int_{0}^{t}\|Y_m(s)\|_{\H}^{p-2}\big(B(s, Y_m(s))\Q_m\d W(s),Y_m(s)\big)\bigg|\bigg].
\end{align}
Now, we consider the last term in \eqref{2312-25eq3.22} and estimate it using Hypothesis \ref{2312-22hypo2} (B.3), Burkholder-Davis-Gundy inequality, H\"older's and Young's inequalities as
\begin{align}\label{2312-25eq3.23}
& p\E\bigg[\sup_{t\in[0,r\wedge \tau_N^m]}\bigg|\int_{0}^{t}\|Y_m(s)\|_{\H}^{p-2}\big(B(s, Y_m(s))\Q_m\d W(s),Y_m(s)\big)\bigg|\bigg]    \nonumber \\	
& \leq  C_p\E\bigg[\bigg(\int_{0}^{r\wedge \tau_N^m}\|Y_m(s)\|_{\H}^{2p-2}\|B(s,Y_m(s))\|_{L_2}^2\d s\bigg)^{\frac{1}{2}}\bigg]   \nonumber \\
& \leq 	C_p\E\bigg[\bigg(\sup_{s\in[0,r\wedge \tau_N^m]}\|Y_m(s)\|_\H^p \int_{0}^{r\wedge \tau_N^m}\|Y_m(s)\|_{\H}^{p-2}\|B(s, Y_m(s))\|_{L_2}^2\d s\bigg)^{\frac{1}{2}}\bigg]   \nonumber \\
& \leq 	\epsilon\E\bigg[\sup_{s\in[0,r\wedge \tau_N^m]}\|Y_m(s)\|_\H^p\bigg]  + C_{\epsilon, p}\E\bigg[\int_{0}^{r\wedge \tau_N^m}\|Y_m(s)\|_{\H}^{p-2}\|B(s,Y_m(s))\|_{L_2}^2\d s\bigg]   \nonumber \\
& \leq \epsilon\E\bigg[\sup_{s\in[0,r\wedge \tau_N^m]}\|Y_m(s)\|_\H^p\bigg]  +  C_{\epsilon,p}   \int_{0}^{T}g(s)\d s+C_{\epsilon,p}\E\bigg[\int_{0}^{r\wedge \tau_N^m}g(s)\|Y_m(s)\|_{\H}^p\d s\bigg],
\end{align}
where argument similar to \eqref{2312-25eq3.1020} is applied in the last inequality and $\epsilon>0$. Combining  \eqref{2312-25eq3.22} and  \eqref{2312-25eq3.23}, and choosing an appropriate parameter $\epsilon$ and applying Gronwall's inequality, we obtain
\begin{align}\label{2312-25eq3.24}
&\E\bigg[\sup_{s\in[0,r\wedge \tau_N^m]}\|Y_m(s)\|_\H^p\bigg]   +  C  \E\bigg[ \int_0^{r\wedge \tau_N^m}\|Y_m(s)\|_\V^\beta\|Y_m(s)\|_\H^{p-2}\d s \bigg] \nonumber\\
\leq & C\|x\|_\H^p  + C\int_{0}^{t}F(s)\d s+C\E\bigg[\int_{0}^{r\wedge \tau_N^m}F(s)\|Y_m(s)\|_\H^p\d s\bigg]  \nonumber \\
&     +  C_{\epsilon,p}   \int_{0}^{T}g(s)\d s+C_{\epsilon,p}\E\bigg[\int_{0}^{r\wedge \tau_N^m}g(s)\|Y_m(s)\|_{\H}^p\d s\bigg] \nonumber \\
\leq & C\|x\|_\H^p  + C_{\epsilon,p}\int_{0}^{t}F(s)\d s+C_{\epsilon,p}\E\bigg[\int_{0}^{r\wedge \tau_N^m}F(s)\|Y_m(s)\|_\H^p\d s\bigg]  \nonumber \\
\leq & C\bigg(\|x\|_\H^p  + \int_{0}^{T}F(s)\d s\bigg) \exp\bigg(  C \int_{0}^{T}F(s)\d s \bigg).
\end{align}
Letting $N\to\infty$, and applying Fatou's lemma, we find, for all $p\geq 2$,
\begin{eqnarray}\label{2312-25eq3.25}
   && \sup_{m\in \mathbb{N}} \Bigg\{\E\bigg[\sup_{t\in[0,T]}\|Y_m(t)\|_\H^p\bigg]   +    \E\bigg[ \int_0^{T}\|Y_m(s)\|_\V^\beta\|Y_m(s)\|_\H^{p-2}\d s \bigg] \Bigg\}   \nonumber\\
&\leq &   C (1+ \|x\|_\H^p),
\end{eqnarray}
where, we used $f_A, g_B$ and $f_\Phi$ belong to $L^1(0,T;\R^+)$.

\

Again, we apply It\^{o}'s formula to the process $\|Y_m(t)\|_\H^2$, to find
\begin{eqnarray}\label{2312-25eq3.26}
 && \|Y_m(t)\|_{\H}^2\nonumber \\
 &= &\|\PP_m x\|_{\H}^2 + \int_{0}^{t}\big[2\langle A(s,Y_m(s)),Y_m(s)\rangle  +  2(\Phi(s,Y_m(s)), Y_m(s)) \big]\d s   \nonumber \\
 &&  +  \int_{0}^{t}\|\PP_m B(s, Y_m(s))\Q_m\|_{L_2}^2 \d s  + 2\int_{0}^{t}(B(s, Y_m(s))\Q_m\d W(s),Y_m(s))
\end{eqnarray}
 Using Hypothesis \ref{2312-22hypo1} (A.3),  Hypothesis \ref{2312-22hypo2} (B.3), and  Hypothesis \ref{2312-22hypo3} (C.3) in \eqref{2312-25eq3.26},  it follows that
\begin{eqnarray}\label{2312-25eq3.27}
  &&\|Y_m(t)\|_\H^2+C\int_{0}^{t}\|Y_m(s)\|_\V^\beta\d s  \nonumber\\
  & \leq&  \|\PP_m x\|_\H^2+\int_{0}^{t}\left\{F(s)\|Y_m(s)\|_\H^2+F(s)\right\}\d s  \nonumber\\
  &&\quad   +2\int_{0}^{t}(B(s, Y_m(s))\Q_m\d W(s), Y_m(s)),
\end{eqnarray}
where $F(s)$ defined by \eqref{2312-25eq3.3020}.
Using the same argument above for \eqref{2312-25eq3.27}, by virtue of \eqref{2312-25eq3.25},  it is easy to obtain
\begin{eqnarray}\label{2312-25eq3.28}
    \sup_{m\in \mathbb{N}}     \E\bigg[ \int_0^{T}\|Y_m(s)\|_\V^\beta \d s \bigg]^{\frac{p}{2}}   \leq   C (1+ \|x\|_\H^p).
\end{eqnarray}
Combining  \eqref{2312-25eq3.25} and \eqref{2312-25eq3.28}, we get our desired result \eqref{2312-25eq3.18}, which completes the proof.
\end{proof}

Next, we establish the tightness property of the laws of the sequence $\{Y_m\}$.
For this, define stopping times as follows,
\begin{eqnarray}\label{2312-25eq3.29}
  \tau_m^N  :=T  \wedge\inf\{t\ge0:\|Y_m(t)\|_\H^2>N\}  \wedge\inf\bigg\{t\geq0: \int_0^t\|Y_m(s)\|_\V^\beta\d s >N\bigg\},
\end{eqnarray}
 with the convection that infimum of a void set is infinite.

\begin{lemma}
  For the stopping times $\tau_m^N$, we have
 \begin{equation}\label{2312-25eq3.30}
  \lim_{N\to\infty}\sup_{m\in \mathbb{N}}\P(\tau_m^N < T)=0.
 \end{equation}
\end{lemma}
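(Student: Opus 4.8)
The plan is to bound the event $\{\tau_m^N<T\}$ by the union of the two events that trigger the stopping time, and then apply Chebyshev's inequality together with the uniform energy estimate \eqref{2312-25eq3.18} of Lemma \ref{2312-25lem1} (with $p=2$), whose right-hand side is independent of $m$.

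First I would observe that on the event $\{\tau_m^N<T\}$ at least one of the two infima in \eqref{2312-25eq3.29} is attained strictly before $T$; since $t\mapsto\|Y_m(t)\|_\H^2$ has continuous paths and $t\mapsto\int_0^t\|Y_m(s)\|_\V^\beta\,\d s$ is continuous and nondecreasing, this forces either $\sup_{t\in[0,T]}\|Y_m(t)\|_\H^2\geq N$ or $\int_0^T\|Y_m(s)\|_\V^\beta\,\d s\geq N$. Hence
\begin{align*}
\P(\tau_m^N<T)\leq \P\Big(\sup_{t\in[0,T]}\|Y_m(t)\|_\H^2\geq N\Big)+\P\Big(\int_0^T\|Y_m(s)\|_\V^\beta\,\d s\geq N\Big).
\end{align*}
By Chebyshev's inequality and Lemma \ref{2312-25lem1} applied with $p=2$ (the exponent $\tfrac{p}{2}=1$ in the second summand of \eqref{2312-25eq3.18} being exactly what is needed), each of these probabilities is bounded by $N^{-1}C(1+\|x\|_\H^2)$ with $C$ independent of both $m$ and $N$. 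Taking the supremum over $m\in\mathbb{N}$ and then letting $N\to\infty$ gives $\sup_{m\in\mathbb{N}}\P(\tau_m^N<T)\leq 2C(1+\|x\|_\H^2)/N\to 0$, which is the claim.

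There is essentially no serious obstacle here; the only point requiring a little care is the passage from "$\tau_m^N<T$" to the two sup/integral bounds, i.e. justifying that a strict inequality at the stopping time propagates to a $\geq N$ bound on $[0,T]$, which is immediate from path continuity and monotonicity of the two functionals. Everything else is a direct consequence of the already-established moment bound \eqref{2312-25eq3.18}.
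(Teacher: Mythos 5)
Your proposal is correct and follows essentially the same route as the paper, which simply invokes Markov's (Chebyshev's) inequality together with the uniform energy estimate \eqref{2312-25eq3.18}; you have merely written out the union bound and the choice $p=2$ explicitly. No issues.
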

\begin{proof}
  This inequality \eqref{2312-25eq3.30} can be easily obtained  by Markov's inequality and Lemma \ref{2312-25lem1}.
\end{proof}

Let now prove the tightness property of the laws of $\{Y_m\}$.
\begin{lemma}\label{2312-22lem2}
	The set of measures $\{\mathcal{L}(Y_m):m\in\mathbb{N}\}$ is tight on $L^\beta(0,T;\H)$ for $\beta\in(1,\infty)$.
\end{lemma}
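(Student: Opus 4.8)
The plan is to establish tightness on $L^\beta(0,T;\H)$ by combining the uniform energy estimates from Lemma \ref{2312-25lem1} with a compactness criterion adapted to the Bochner space $L^\beta(0,T;\H)$. Since the embedding $\V\subset\H$ is compact, a natural tool is an Aubin--Lions--Simon type lemma: a set of functions bounded in $L^\beta(0,T;\V)$ whose time-translates are uniformly small in an appropriate weaker norm is relatively compact in $L^\beta(0,T;\H)$. Concretely, I would fix $\delta>0$ and show that there is a compact set $K_\delta\subset L^\beta(0,T;\H)$ with $\sup_{m}\P(Y_m\notin K_\delta)<\delta$, where $K_\delta$ is built from two ingredients: a bound $\int_0^T\|u(t)\|_\V^\beta\,\d t\le R_\delta$ and a time-regularity bound $\sup_{0<\theta\le\theta_0}\theta^{-\gamma}\int_0^{T-\theta}\|u(t+\theta)-u(t)\|_{\V^*}\,\d t\le R_\delta$ (or the analogous fractional Sobolev bound in time with values in $\V^*$). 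The first bound is immediate from \eqref{2312-25eq3.18} via Markov's inequality.

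The main work is the time-increment estimate. Using the mild/weak form \eqref{2312-25eq3.17}, write $Y_m(t+\theta)-Y_m(t)$ as the sum of a drift term $\int_t^{t+\theta}\PP_m A(s,Y_m(s))\,\d s$, a control term $\int_t^{t+\theta}\PP_m\Phi(s,Y_m(s))\Q_m\,\d s$, and a stochastic term $\int_t^{t+\theta}\PP_m B(s,Y_m(s))\Q_m\,\d W(s)$. For the drift term I would estimate its $\V^*$-norm by $\int_t^{t+\theta}\|A(s,Y_m(s))\|_{\V^*}\,\d s$ and invoke the growth hypothesis (A.4), \eqref{2312-22eq3.8}, together with the uniform bounds \eqref{2312-25eq3.18}, to get $\E\int_0^{T-\theta}\|\!\int_t^{t+\theta}A(s,Y_m(s))\,\d s\|_{\V^*}\,\d t\le C\theta^{1/\beta'}$ or a comparable power of $\theta$ after Hölder in time, with $\beta'=\beta/(\beta-1)$. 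The control term is handled more easily via (C.4), \eqref{2312-25eq1.2.5}, since $\Phi$ takes values in $\H\hookrightarrow\V^*$. For the stochastic term I would use the Itô isometry (or BDG) in $\H$, the growth condition (B.3), \eqref{2312-22eq3.10}, and the bound \eqref{2312-25eq3.18} to obtain $\E\|\!\int_t^{t+\theta}\PP_m B\,\Q_m\,\d W\|_\H^2\le C\theta$, uniformly in $m$ and $t$; this gives a Hölder-$\tfrac12$ bound in time in $\H$, hence in $\V^*$. Summing the three contributions yields $\sup_m\E\int_0^{T-\theta}\|Y_m(t+\theta)-Y_m(t)\|_{\V^*}\,\d t\le C\theta^{\kappa}$ for some $\kappa>0$, which is precisely the hypothesis needed for the Aubin--Lions--Simon compactness criterion to produce the required compact sets, and Markov's inequality then converts the moment bounds into the tightness statement.

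The main obstacle I anticipate is the drift term and the exponents: because $A$ is only controlled in $\V^*$ with a growth of order $\beta/(\beta-1)$ in $\|\cdot\|_\V$ and an extra polynomial factor $(1+\|u\|_\H^\alpha)$, one must be careful to split off the $\H$-factor using the $L^\infty(0,T;\H)$ bound from \eqref{2312-25eq3.18} and then apply Hölder in time to the remaining $L^{\beta/(\beta-1)}$-in-time quantity, so that $\int_t^{t+\theta}\|A(s,Y_m(s))\|_{\V^*}\,\d s$ genuinely gains a positive power of $\theta$ after taking expectations. A secondary technical point is that the projections $\PP_m$ and $\Q_m$ are not uniformly bounded as maps on $\V^*$ or $L_2(\U,\H)$ in all the relevant norms, so I would either absorb them by testing against $v\in\H_m$ and using $\|\PP_m v\|_\H\le\|v\|_\H$, or work with the $\H$- and $\V^*$-duality directly; since the estimates above only ever use $\H$-norms of $B$ and $\V^*$-norms of $A$ applied after the orthogonal projection, this causes no real difficulty. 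Once the fractional-time-regularity plus $L^\beta(0,T;\V)$-boundedness are in hand, the conclusion follows from the standard deterministic compactness embedding into $L^\beta(0,T;\H)$ combined with Prokhorov's theorem.
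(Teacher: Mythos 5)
Your proposal is essentially correct, but it follows a genuinely different route from the paper. The paper invokes Lemma 5.2 of R\"ockner--Shang--Zhang (2024), which reduces tightness on $L^\beta(0,T;\H)$ to the uniform bound \eqref{2312-25eq3.34} together with smallness in probability of the time increments measured in the $\H$-norm, i.e. \eqref{2312-25eq3.35}; to prove the latter the paper localizes with the stopping times $\tau_m^N$, applies It\^o's formula to $\|Y_m^N(t+\delta)-Y_m^N(t)\|_\H^2$ (respectively $\|\cdot\|_\H^\beta$ when $\beta>2$), and exploits coercivity (A.3) for the diagonal terms and the growth conditions (A.4), (B.3), (C.3), (C.4) for the cross terms, arriving at a bound of order $\delta+\delta^{(\beta-1)/\beta}$. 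You instead estimate the increments in the weaker $\V^*$-norm directly from the equation \eqref{2312-25eq3.17} --- no It\^o formula for the difference process and no stopping-time localization, since the moment bounds of Lemma \ref{2312-25lem1} suffice --- and then conclude via a deterministic Aubin--Lions--Simon compactness argument plus Prokhorov/Markov. This is the classical Flandoli--G\c{a}tarek-style tightness argument: it buys a simpler increment estimate (coercivity is never needed there, only (A.4), (B.3), (C.4) and the energy bounds), at the price of having to set up the compactness lemma yourself, whereas the paper's route proves the increment estimate in the stronger $\H$-norm precisely so that the ready-made probabilistic criterion of R\"ockner--Shang--Zhang applies verbatim.

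Two details in your write-up should be tightened. First, as written your compact sets $K_\delta$ encode only the $L^\beta(0,T;\V)$ bound and an $L^1$-in-time translation bound in $\V^*$; Simon's theorem requires translation smallness in $L^\beta(0,T;\V^*)$ (same exponent), and you cannot pass from $L^1$ to $L^\beta$ by interpolation using the $L^\beta(0,T;\V)$ bound alone. The fix is cheap: include in $K_\delta$ the bound $\sup_{t\in[0,T]}\|u(t)\|_{\H}\leq R_\delta$, which is uniformly available from \eqref{2312-25eq3.18}; then $\int_0^{T-\theta}\|u(t+\theta)-u(t)\|_{\V^*}^\beta\,\d t \leq (2CR_\delta)^{\beta-1}\int_0^{T-\theta}\|u(t+\theta)-u(t)\|_{\V^*}\,\d t$, and likewise convergence in $L^1(0,T;\H)$ of a sequence in $K_\delta$ upgrades to convergence in $L^\beta(0,T;\H)$. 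Second, your claimed uniform-in-$t$ bound $\E\big\|\int_t^{t+\theta}\PP_m B\,\Q_m\,\d W\big\|_\H^2\leq C\theta$ would require $g_B\in L^\infty(0,T)$; under (B.3) with $g_B\in L^1(0,T;\R_+)$ one only gets $C\,\E\int_t^{t+\theta}g_B(s)(1+\|Y_m(s)\|_\H^2)\,\d s$, which is not $O(\theta)$ pointwise in $t$. However, after integrating in $t$ over $[0,T-\theta]$ and applying Fubini (exactly as you do for the drift term) this still yields the required $O(\theta^{1/2})$ bound for the integrated increment, so the argument survives; you should simply state the estimate in its integrated form.
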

\begin{proof}
Due to \eqref{2312-25eq3.18}, we have
	\begin{align}\label{2312-25eq3.34}
		\sup_{m\in\mathbb{N}}\E\bigg[\int_{0}^{T}\|Y_m(t)\|_\V^\beta\d t\bigg]<\infty,
	\end{align}
using   Lemma 5.2 in R\"{o}ckner, Shang, Zhang (2024), it is enough to prove for any $\epsilon>0$
\begin{align}\label{2312-25eq3.35}
	\lim_{\delta\to 0^+}\sup_{m\in\mathbb{N}}  \P\bigg(\int_0^{T-\delta}\|Y_m(t+\delta)- Y_m(t)\|_\H^\beta\d t>\epsilon\bigg)=0.
\end{align}
Fix $Y_m^N(t):=Y_m(t\wedge\tau_m^N)$.
An application of Markov's inequality yields
\begin{eqnarray}\label{2312-25eq3.36}
    &&   \P\bigg(\int_0^{T-\delta}\|Y_m(t+\delta)-Y_m(t)\|_\H^\beta\d t>\epsilon\bigg)  \nonumber \\
  &\leq& \P  \bigg(\int_0^{T-\delta}\|Y_m(t+\delta)- Y_m(t)\|_\H^\beta\d t>\epsilon,   \tau_m^N=T  \bigg)     +  \P(\tau_m^N <T)   \nonumber \\
  &\leq& \frac{1}{\epsilon}\E\bigg[\int_0^{T-\delta}\|Y_m^N(t+\delta)-Y_m^N(t)\|_\H^\beta\d t\bigg]   + \P(\tau_m^N<T).
\end{eqnarray}
If we manage to show that for any fixed $N>0$
\begin{align}\label{2312-25eq3.37}
	\lim_{\delta\to0+}\sup_{m\in\mathbb{N}}\E\bigg[\int_0^{T-\delta}\|Y_m^N(t+\delta)-Y_m^N(t)\|_\H^\beta\d t\bigg]=0,
\end{align}
then, in light of \eqref{2312-25eq3.30}, letting $\delta\to0$ and then $N\to\infty$ in \eqref{2312-25eq3.36}, we get \eqref{2312-25eq3.35}, which completes the proof of tightness of laws of $\{Y_m\}$ in $L^\beta(0,T;\H)$.
Thus, if we establish \eqref{2312-25eq3.37}, then we are done.
To complete this, we divide the proof in two parts which depends on the values of $\beta$, that is, $1<\beta\leq 2$ and $\beta>2$.

\

\noindent \textbf{The case of $\beta\in(1,2]$.}
 Applying It\^o's formula for $\|Y_m^N(t+\delta)-Y_m^N(t)\|_\H^2$, we get
 \begin{eqnarray}\label{2312-25eq3.38}
&&   \E\bigg[\|Y_m^N(t+\delta)-Y_m^N(t)\|_\H^2\bigg]  \nonumber   \\
&=&  \E\bigg[\int_{t\wedge\tau_m^N}^{(t+\delta)\wedge\tau_m^N}  2\langle A(s,Y_m(s)),Y_m(s)-Y_m(t\wedge\tau_m^N)\rangle \d s\bigg]  \nonumber   \\
&& + \E\bigg[\int_{t\wedge\tau_m^N}^{(t+\delta)\wedge\tau_m^N}  2\Big( \Phi(s,Y_m(s)),Y_m(s)-Y_m(t\wedge\tau_m^N)\Big) \d s\bigg]  \nonumber   \\
&& + \E\bigg[\int_{t\wedge\tau_m^N}^{(t+\delta)\wedge\tau_m^N}   \|\PP_m B(s,Y_m(s))\Q_m\|_{L_2}^2\d s\bigg].
 \end{eqnarray}
Integrating \eqref{2312-25eq3.38} with respect to $t$,  and applying Fubini's theorem we conclude that
\begin{eqnarray}\label{2212-25eq3.39}
&&  \E\bigg[\int_0^{T-\delta}\|Y_m^N(t+\delta)-Y_m^N(t)\|_\H^2\d t\bigg]   \nonumber   \\
&=&  \E\bigg[\int_0^{T-\delta}\bigg\{ \int_{t\wedge\tau_m^N}^{(t+\delta)\wedge\tau_m^N} \bigg(2\langle A(s,Y_m(s)),Y_m(s)\rangle   \nonumber   \\
 && ~~~~~~~~~~ + 2( \Phi(s,Y_m(s)),Y_m(s))  +  \|\PP_m B(s,Y_m(s))\Q_m\|_{L_2}^2   \bigg)\d s\bigg\}\d t\bigg] \nonumber   \\
&&  - 2\E\bigg[\int_0^{T-\delta}\bigg\{\int_{t\wedge\tau_m^N}^{(t+\delta)\wedge\tau_m^N} \Big[ \langle A(s,Y_m(s)),Y_m(t\wedge\tau_m^N)\rangle   \nonumber   \\
 && ~~~~~~~~~~  + ( \Phi(s,Y_m(s)),Y_m(t\wedge\tau_m^N))\Big] \d s\bigg\}\d t\bigg]  \nonumber   \\
&=& \E\bigg[\int_0^{T\wedge\tau_m^N} \bigg(\int_{0\vee(s-\delta)}^s \chi_{\{\tau_m^N >t\}} \d t\bigg) \bigg(2\langle A(s,Y_m(s)),Y_m(s)\rangle  \nonumber   \\
 && ~~~~~~~~~~  + 2( \Phi(s,Y_m(s)),Y_m(s)) +\|\PP_m B(s, Y_m(s))\Q_m\|_{L_2}^2 \bigg) \d s  \nonumber   \\
 && - 2\E\bigg[ \int_0^{T\wedge\tau_m^N}\bigg(\int_{0\vee(s-\delta)}^s  \chi_{\{\tau_m^N>t\}} \langle A(s,Y_m(s)),Y_m(t\wedge\tau_m^N)\rangle    \nonumber   \\
 && ~~~~~~~~~~+ \int_{0\vee(s-\delta)}^s  \chi_{\{\tau_m^N>t\}} (\Phi(s,Y_m(s)),Y_m(t\wedge\tau_m^N))\bigg) \d t\d s \bigg]   \nonumber   \\
&=:&  I_1+I_2.
\end{eqnarray}
Using Hypothesis \ref{2312-22hypo1} (A.3),  Hypothesis \ref{2312-22hypo2} (B.3), and  Hypothesis \ref{2312-22hypo3} (C.3), we estimate the term $I_1$ as
\begin{eqnarray}\label{2212-25eq3.40}
I_1
& \leq&   \delta \E\bigg[\int_0^{T\wedge\tau_m^N} F(s)(1+\|Y_m(s)\|_\H^2)\d s\bigg] \nonumber\\
& \leq&  \delta \int_0^{T}F(s)\d s\left(1+\E\bigg[\sup_{s\in[0,T]}\|Y_m(s)\|_\H^2\bigg]\right) \nonumber\\
&\leq& C\delta,
\end{eqnarray}
where $F(s)$ defined by \eqref{2312-25eq3.3020}.

Using Hypothesis \ref{2312-22hypo1} (A.4), Hypothesis \ref{2312-22hypo3} (C.4),   we estimate the term $I_2$ as
\begin{eqnarray}\label{2212-25eq3.41}
|I_2|
& \leq&  2\E\bigg[ \bigg|\int_0^{T\wedge\tau_m^N}\bigg(\int_{0\vee(s-\delta)}^s  \chi_{\{\tau_m^N>t\}} \langle A(s,Y_m(s)),Y_m(t\wedge\tau_m^N)\rangle    \nonumber   \\
 && ~~~~~~~~~~ + \int_{0\vee(s-\delta)}^s  \chi_{\{\tau_m^N>t\}} (\Phi(s,Y_m(s)),Y_m(t\wedge\tau_m^N))\bigg) \d t\d s \bigg|\bigg]   \nonumber   \\
& \leq& 2 \E\bigg[\int_0^{T\wedge\tau_m^N}  \|A(s,Y_m(s))\|_{\V^*} \left(\int_{0\vee(s-\delta)}^s\|Y_m(t\wedge\tau_m^N)\|_\V\d t\right)\d s \nonumber   \\
 && ~~~~~~~~~~   +\int_0^{T\wedge\tau_m^N} \|\Phi(s,Y_m(s))\|_\H      \left(\int_{0\vee(s-\delta)}^s\|Y_m(t\wedge\tau_m^N)\|_\H\d t\right)\d s\bigg]  \nonumber \\
& \leq& 2\delta^{\frac{\beta-1}{\beta}}\E\bigg[\int_0^{T\wedge\tau_m^N}  \|A(s,Y_m(s))\|_{\V^*} \d s \bigg(\int_0^{T\wedge\tau_m^N}\|Y_m(t)\|_\V^\beta\d t\bigg)^{\frac{1}{\beta}} \nonumber   \\
&& ~~~~~~~~~~  +\int_0^{T\wedge\tau_m^N} \|\Phi(s,Y_m(s))\|_\H   \d s   \bigg(\int_0^{T\wedge\tau_m^N}\|Y_m(t)\|_\V^\beta\d t\bigg)^{\frac{1}{\beta}}  \bigg]  \nonumber \\
& \leq&  2\delta^{\frac{\beta-1}{\beta}}
\E\bigg[T^{\frac{1}{\beta}} \bigg( \int_0^{T\wedge\tau_m^N}  \|A(s,Y_m(s))\|_{\V^*}^{\frac{\beta}{\beta-1}} \d s\bigg)^{\frac{\beta-1}{\beta}}   \bigg(\int_0^{T\wedge\tau_m^N}\|Y_m(t)\|_\V^\beta\d t\bigg)^{\frac{1}{\beta}} \nonumber   \\
 && ~~~~~~~~~~   +\int_0^{T\wedge\tau_m^N} \|\Phi(s,Y_m(s))\|_\H  \d s    \bigg(\int_0^{T\wedge\tau_m^N}\|Y_m(t)\|_\V^\beta\d t\bigg)^{\frac{1}{\beta}}\bigg]  \nonumber \\
 %
& \leq&  C\delta^{\frac{\beta-1}{\beta}}.
\end{eqnarray}
Combining \eqref{2212-25eq3.39}-\eqref{2212-25eq3.41} yields
\begin{eqnarray*}\label{2212-25eq3.42}
&&  \sup_{m\in \mathbb{N}}\E\bigg[\int_0^{T-\delta}\|Y_m^N(t+\delta)-Y_m^N(t)\|_\H^2\d t\bigg]  \leq C(\delta+ \delta^{\frac{\beta-1}{\beta}})
\end{eqnarray*}
Now, for $\beta\in(1,2]$, we use H\"older's inequality to get
\begin{eqnarray*}\label{2212-25eq3.44}
&& \lim_{\delta\to 0+} \sup_{m\in\mathbb{N}}\E\bigg[\int_0^{T-\delta}\|Y_m^N(t+\delta)-Y_m^N(t)\|_\H^\beta\d t\bigg] \\
& \leq &  C\lim_{\delta\to0+}\sup_{m\in\mathbb{N}}\Bigg\{\E\bigg[\int_0^{T-\delta}\|Y_m^N(t+\delta)-Y_m^N(t)\|_\H^2\d t\bigg]\Bigg\}^{\frac{\beta}{2}}=0,
\end{eqnarray*}
which completes the proof of \eqref{2312-25eq3.37} for $\beta\in(1,2]$.

\

\noindent \textbf{The case of $\beta\in(2,\infty)$.}
 Again, applying It\^o's formula to the process $$\|Y_m^N(t+\delta)-Y_m^N(t)\|_\H^\beta,$$ and  then taking expectations, we find
\begin{eqnarray}\label{2212-25eq3.45}
 && \E\big[\|Y_m^N(t+\delta)-Y_m^N(t)\|_\H^\beta\big]   \nonumber \\
 &=& \frac{\beta}{2}\E\bigg[\int_{t\wedge\tau_m^N}^{(t+\delta)\wedge\tau_m^N}  \|Y_m(s)-Y_m(t\wedge\tau_m^N)\|_\H^{\beta-2}  \big[2\langle A(s,Y_m(s)),Y_m(s)-Y_m(t\wedge\tau_m^N)\rangle   \nonumber \\
 &&  +  2( \Phi(s,Y_m(s)),Y_m(s)-Y_m(t\wedge\tau_m^N)) + \|\PP_m B(s, Y_m(s))\Q_m\|_{L_2}^2\big]\d s\bigg]    \nonumber \\
 &&  +  \frac{\beta(\beta-2)}{2}\E\bigg[\int_{t\wedge\tau_m^N}^{(t+\delta)\wedge\tau_m^N}\|Y_m(s)-Y_m(t\wedge\tau_m^N)\|_{\H}^{\beta-4}   \nonumber \\
 &&  ~~~~~~~~~~~~~~~~~~~~ \times\|(Y_m(s)-Y_m(t\wedge\tau_m^N))\circ \PP_mB(s,Y_m(s))\Q_m\|_{\U}^2\d s\bigg].
\end{eqnarray}
Integrating over $[0,T-\delta]$, using Fubini's theorem, and then   we find
\begin{eqnarray}\label{2212-25eq3.46}
 && \E\bigg[\int_0^{T-\delta}\|Y_m^N(t+\delta)-Y_m^N(t)\|_\H^\beta\d t\bigg]   \nonumber \\
 &\leq&   C_{\beta}\E\bigg[\int_0^{T-\delta}\bigg\{\int_{t\wedge\tau_m^N}^{(t+\delta)\wedge\tau_m^N}\|Y_m(s)-Y_m(t\wedge\tau_m^N)\|_\H^{\beta-2} \Big[2\langle A(s,Y_m(s)),Y_m(s)\rangle  \nonumber \\
 && ~~~~~~~~~ + 2(\Phi(s,Y_m(s)),Y_m(s))  +\|\PP_mB(s,Y_m(s))\Q_m\|_{L_2}^2\Big]\d s\bigg\}\d t\bigg]    \nonumber \\
 && -\beta\E\bigg[\int_0^{T-\delta}\bigg\{\int_{t\wedge\tau_m^N}^{(t+\delta)\wedge\tau_m^N}  \|Y_m(s)-Y_m^N(t\wedge\tau_m^N)\|_\H^{\beta-2}   \nonumber \\
 && ~~~~~~~~~ \times\Big[ \langle A(s,Y_m(s)),Y_m(t\wedge\tau_m^N)\rangle  +  (\Phi(s,Y_m(s)),Y_m(t\wedge\tau_m^N) ) \Big] \d s\bigg\}\d t\bigg]   \nonumber \\
 &=:& J_1+J_2.
\end{eqnarray}
Since we are considering the case $\tau_m^N=T$ only, we know that $\mathbb{P}\left(\chi_{\{\tau_m^N\leq t\leq T-\delta\}}\right)=0$, applying Fubini's theorem, Hypothesis \ref{2312-22hypo1} (A.3), Hypothesis \ref{2312-22hypo2} (B.3),  Hypothesis \ref{2312-22hypo3} (C.3), H\"older's and Young's inequalities, similarly to $I_1$, we estimate the term $J_1$ as,
\begin{eqnarray}\label{2212-25eq3.47}
J_1
 &\leq& C \E\bigg[\int_0^{T\wedge\tau_m^N}\bigg(\delta\|Y_m(s)\|_\H^{\beta-2}  +  \delta\sup_{0\leq t\leq T\wedge\tau_m^N}\|Y_m(t)\|_\H^{\beta-2}\bigg)    \nonumber \\
 &&  ~~~~~~~~~  \times  F(s)\bigg(1+\|Y_m(s)\|_{\H}^2\bigg) \d s\bigg]   \nonumber \\
 &\leq&   C\delta\E\bigg[\int_0^{T\wedge\tau_m^N} F(s)\big(1+\|Y_m(s)\|_\H^{2}\big)\|Y_m(s)\|_\H^{\beta-2}\d s   \nonumber \\
 && ~~~~~~~~~  + \sup_{0\leq t\leq T\wedge\tau_m^N}\|Y_m(t)\|_\H^{\beta-2}\int_0^{T\wedge\tau_m^N}   F(s)\big(1+\|Y_m(s)\|_\H^{2}\big)\d s\bigg]  \nonumber \\
 &\leq&   C	\delta \E\bigg[\int_0^{T\wedge\tau_m^N} F(s)\left(1+\|Y_m(s)\|_\H^{\beta}\right)\d s\bigg]   \nonumber \\
 && +C\delta\E\bigg[\sup_{0\leq t\leq T\wedge\tau_m^N}\left\{ \|Y_m(t)\|_\H^{\beta-2}\big(1+\|Y_m(t)\|_\H^{2}\big)\right\}\int_0^{T}  F(s)\d s\bigg] \nonumber \\
 &\leq& C\delta.
\end{eqnarray}
Now, consider the term $J_2$ and we estimate it, similarly to $I_2$, using Fubini's theorem, H\"older's inequality,     Hypothesis \ref{2312-22hypo1} (A.4), and Hypothesis \ref{2312-22hypo3} (C.4) as
\begin{eqnarray}\label{2212-25eq3.48}
|J_2|
 &\leq&  \beta\E\bigg[\bigg|\int_0^{T\wedge\tau_m^N} \int_{0\vee(s-\delta)}^s  \chi_{\{\tau_m^N>t\}}\|Y_m(s)-Y_m(t\wedge\tau_m^N)\|_\H^{\beta-2}  \nonumber \\
 &&  ~~~~~~~~~ \times\Big[ \langle A(s,Y_m(s)),Y_m(t\wedge\tau_m^N) \rangle  +  (\Phi(s,Y_m(s)),Y_m(t\wedge\tau_m^N) ) \Big] \d t\bigg\}\d s\bigg]  \nonumber \\
 &\leq&  C\beta\E\bigg[\bigg|\int_0^{T\wedge\tau_m^N} \int_{0\vee(s-\delta)}^s  \chi_{\{\tau_m^N>t\}}\|Y_m(s)\|_\H^{\beta-2}  \nonumber \\
 &&  ~~~~~~~~~ \times\Big[ \langle A(s,Y_m(s)),Y_m(t\wedge\tau_m^N) \rangle  +  (\Phi(s,Y_m(s)),Y_m(t\wedge\tau_m^N) ) \Big] \d t\bigg\}\d s\bigg]  \nonumber \\
 && + C\beta\E\bigg[\bigg|\int_0^{T\wedge\tau_m^N} \int_{0\vee(s-\delta)}^s  \chi_{\{\tau_m^N>t\}}\|Y_m(t\wedge\tau_m^N)\|_\H^{\beta-2}  \nonumber \\
 &&  ~~~~~~~~~ \times\Big[ \langle A(s,Y_m(s)),Y_m(t\wedge\tau_m^N) \rangle  +  (\Phi(s,Y_m(s)),Y_m(t\wedge\tau_m^N) ) \Big] \d t\bigg\}\d s\bigg]  \nonumber \\
 &\leq&  C\beta\E\bigg[  \sup_{0\leq s \leq T\wedge \tau_m^N}\|Y_m(s)\|_\H^{\beta-2} \nonumber \\
 &&  ~~~~~~~~~  \bigg\{\int_0^{T\wedge\tau_m^N} \| A(s,Y_m(s))\|_{\V^*}  \int_{0\vee(s-\delta)}^s \| Y_m(t\wedge\tau_m^N)\|_\V  \d t \d s\nonumber \\
 &&  ~~~~~~~~~   + \int_0^{T\wedge\tau_m^N} \|\Phi(s,Y_m(s))\|_\H  \int_{0\vee(s-\delta)}^s \| Y_m(t\wedge\tau_m^N)\|_\H  \d t\d s \bigg\}\bigg] \nonumber \\
 && +  C\beta\E\bigg[  \sup_{0\leq s \leq T\wedge \tau_m^N}\|Y_m(t\wedge\tau_m^N)\|_\H^{\beta-2} \nonumber \\
 &&  ~~~~~~~~~   \bigg\{\int_0^{T\wedge\tau_m^N} \| A(s,Y_m(s))\|_{\V^*}  \int_{0\vee(s-\delta)}^s \| Y_m(t\wedge\tau_m^N)\|_\V  \d t \d s\nonumber \\
 &&  ~~~~~~~~~   + \int_0^{T\wedge\tau_m^N} \|\Phi(s,Y_m(s))\|_\H  \int_{0\vee(s-\delta)}^s \| Y_m(t\wedge\tau_m^N)\|_\H  \d t\d s \bigg\}\bigg]            \nonumber \\
 &\leq&    C\beta\delta^{\frac{\beta-1}{\beta}}\E\bigg[  \sup_{0\leq s \leq T\wedge \tau_m^N}\|Y_m(s)\|_\H^{\beta-2} \nonumber \\
 &&  ~~~~~~~~~  \bigg\{\int_0^{T\wedge\tau_m^N} \| A(s,Y_m(s))\|_{\V^*}  \d s \Big( \int_{0}^{T\wedge\tau_m^N} \| Y_m(t)\|_\V^\beta  \d t\Big)^{\frac{1}{\beta}}\nonumber \\
 &&  ~~~~~~~~~   + \int_0^{T\wedge\tau_m^N} \|\Phi(s,Y_m(s))\|_\H   \d s \Big( \int_{0}^{T\wedge\tau_m^N} \| Y_m(t)\|_\V^\beta  \d t\Big)^{\frac{1}{\beta}} \bigg\}\bigg] \nonumber \\
 && +  C\beta\delta^{\frac{\beta-1}{\beta}} \E\bigg[  \sup_{0\leq s \leq T\wedge \tau_m^N}\|Y_m(t\wedge\tau_m^N)\|_\H^{\beta-2} \nonumber \\
 &&  ~~~~~~~~~  \bigg\{\int_0^{T\wedge\tau_m^N} \| A(s,Y_m(s))\|_{\V^*}   \d s \Big( \int_{0}^{T\wedge\tau_m^N} \| Y_m(t)\|_\V^\beta  \d t\Big)^{\frac{1}{\beta}}\nonumber \\
 &&  ~~~~~~~~~   + \int_0^{T\wedge\tau_m^N} \|\Phi(s,Y_m(s))\|_\H   \d s \Big( \int_{0}^{T\wedge\tau_m^N} \| Y_m(t)\|_\V^\beta  \d t\Big)^{\frac{1}{\beta}} \bigg\}\bigg]            \nonumber \\
 &\leq&    C\delta^{\frac{\beta-1}{\beta}}.
\end{eqnarray}
Substituting \eqref{2212-25eq3.47} and \eqref{2212-25eq3.48} in \eqref{2212-25eq3.46}, we arrive at
\begin{equation*}\label{2212-25eq3.49}
  \sup_{m\in\mathbb{N}}\E\bigg[\int_0^{T-\delta}\|Y_m(s)-Y_m(t\wedge\tau_m^N)\|_\H^\beta\bigg] \leq C(\delta+\delta^{\frac{\beta-1}{\beta}}).
\end{equation*}
Hence, our desired \eqref{2312-25eq3.37} follows for $\beta\geq 2$.
\end{proof}

Our goal in this section is to identify a suitable continuous solution, so  we need to prove the tightness of the family $\{\mathcal{L}(Y_m):m\in\mathbb{N}\}$ on the space of continuous functions, for more details, see Billingsley (2013) or  M\'{e}tivier  (1988). To achieve this, we must verify the following Aldous condition, see Aldous (1978) or Theorem 3.2 in  M\'{e}tivier  (1988).
\begin{definition}
	Let $(\mathbb{Y},\|\cdot\|_{\mathbb{Y}})$ be a separable Banach space and let $\{Y_m\}$ be a sequence of $\mathbb{Y}$-valued random variables. Assume that for every $\epsilon,\eta>0$, there is a $\delta>0$ such that for every sequence  $\{\tau_m\}_{m\in\mathbb{N}}$  of $\mathcal{F}$-stopping times with $\tau_m\leq T$,  the following inequality holds:
\begin{align}
   \sup_{m\in\mathbb{N}}\sup_{0<\xi\leq \delta}\P\left\{\|Y_m((\tau_m+\xi)\wedge T)-Y_m(\tau_m)\|_{\mathbb{Y}}\geq \eta\right\}\leq \epsilon.
\end{align}
In this case, we say that the sequence $\{Y_m\}$  satisfies the Aldous condition.
\end{definition}

If a sequence $\{Y_m\}$ satisfies the Aldous condition in  $\mathbb{Y}$, then  the laws of $\{Y_m\}$ form a tight sequence on $C([0,T];\mathbb{Y})$.
By applying the Chebyshev inequality, we can establish a sufficient condition for verifying the Aldous condition.

\begin{proposition}\label{2312-22lemAldous}
Given any $\varepsilon>0$,  there is a $\delta>0$ such that
\begin{align}\label{2312-25eq3.50}
	\sup_{m\in\mathbb{N}}\sup_{0<\xi\leq \delta}\E\big[\|Y_m((\tau_m+\xi)\wedge T)-Y_m(\tau_m)\|_{\mathbb{Y}}^\zeta\big]\leq \varepsilon,
\end{align}
for some $\zeta>0$, then the sequence $\{Y_m\}$ satisfies the Aldous condition in the space $\mathbb{Y}$. 
\end{proposition}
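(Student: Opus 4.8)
The plan is to deduce the Aldous condition directly from the moment estimate \eqref{2312-25eq3.50} via Chebyshev's inequality, with no further ingredients. Fix $\epsilon,\eta>0$ as in the definition of the Aldous condition. Since \eqref{2312-25eq3.50} is assumed to hold for every $\varepsilon>0$, I would apply it with the specific choice $\varepsilon:=\epsilon\,\eta^{\zeta}$, obtaining a $\delta>0$ (depending only on $\epsilon$ and $\eta$, as $\zeta$ is a fixed exponent) such that
\[
\sup_{m\in\mathbb{N}}\sup_{0<\xi\leq\delta}\E\big[\|Y_m((\tau_m+\xi)\wedge T)-Y_m(\tau_m)\|_{\mathbb{Y}}^{\zeta}\big]\leq \epsilon\,\eta^{\zeta}
\]
for every sequence $\{\tau_m\}_{m\in\mathbb{N}}$ of $\mathcal{F}$-stopping times with $\tau_m\leq T$.

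Next, for each fixed $m$, each such stopping-time sequence, and each $0<\xi\leq\delta$, I would apply Chebyshev's inequality to the nonnegative random variable $\|Y_m((\tau_m+\xi)\wedge T)-Y_m(\tau_m)\|_{\mathbb{Y}}^{\zeta}$:
\[
\P\big\{\|Y_m((\tau_m+\xi)\wedge T)-Y_m(\tau_m)\|_{\mathbb{Y}}\geq\eta\big\}
=\P\big\{\|Y_m((\tau_m+\xi)\wedge T)-Y_m(\tau_m)\|_{\mathbb{Y}}^{\zeta}\geq\eta^{\zeta}\big\}
\leq \frac{1}{\eta^{\zeta}}\,\E\big[\|Y_m((\tau_m+\xi)\wedge T)-Y_m(\tau_m)\|_{\mathbb{Y}}^{\zeta}\big].
\]
Combining the two displays and taking the supremum over $m\in\mathbb{N}$ and $0<\xi\leq\delta$ gives
\[
\sup_{m\in\mathbb{N}}\sup_{0<\xi\leq\delta}\P\big\{\|Y_m((\tau_m+\xi)\wedge T)-Y_m(\tau_m)\|_{\mathbb{Y}}\geq\eta\big\}\leq\frac{1}{\eta^{\zeta}}\cdot\epsilon\,\eta^{\zeta}=\epsilon,
\]
which is precisely the Aldous condition for $\{Y_m\}$ in $\mathbb{Y}$.

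There is essentially no obstacle here: the argument is a one-line application of Chebyshev's inequality, and the only point requiring care is to select the threshold in \eqref{2312-25eq3.50} as $\varepsilon=\epsilon\,\eta^{\zeta}$ so that the factor $\eta^{-\zeta}$ produced by Chebyshev is exactly absorbed, after which $\delta$ depends only on $\epsilon,\eta$. The uniformity over stopping-time sequences demanded by the Aldous condition is already built into the hypothesis \eqref{2312-25eq3.50}, so nothing else is needed. (When this proposition is invoked one takes $\zeta=2$ and $\mathbb{Y}=\H$, and verifies \eqref{2312-25eq3.50} by applying It\^o's formula to $\|Y_m(t)-Y_m(\tau_m)\|_{\H}^{2}$ on the stochastic interval $[\tau_m,(\tau_m+\xi)\wedge T]$ and using the coercivity/growth hypotheses together with Lemma~\ref{2312-25lem1}, exactly as in the tightness estimates above.)
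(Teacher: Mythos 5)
Your proof is correct and is exactly the argument the paper intends: the paper states this proposition without a written proof, remarking only that it follows "by applying the Chebyshev inequality," and your choice $\varepsilon=\epsilon\,\eta^{\zeta}$ followed by the Markov/Chebyshev bound is precisely that standard argument. No differences to note.
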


Now, our aim is to prove that the laws of sequence of approximated solutions $\{Y_m\}$ denoted by $\mathcal{L}(Y_m)$ are tight  as a probability measure on the space $C([0,T];\V^*)\cap L^\beta(0,T;\H)$.
We have already proved the set of measures $\{\mathcal{L}(Y_m):m\in\mathbb{N}\}$ is tight in $L^\beta(0,T;\H)$ in Lemma \ref{2312-22lem2}.
Next, we will verify the set of measures $\{\mathcal{L}(Y_m):m\in\mathbb{N}\}$ is tight in $C([0,T];\V^*)$, for this, it is enough to establish \eqref{2312-25eq3.50}, where $\mathbb{Y}$ is selected as $\V^*$.

\begin{lemma}\label{2312-22lem3.5}
Assume that $x\in\H$. Then the laws $\{\mathcal{L}(Y_m):m\in \mathbb{N})\}$ 
 form a tight sequence of probability measures on $C([0,T];\V^*)$.
\end{lemma}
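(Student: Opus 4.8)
**Plan for proving Lemma 3.5 (tightness of $\{\mathcal{L}(Y_m)\}$ on $C([0,T];\V^*)$).**

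The strategy is to verify the Aldous condition in $\V^*$ via the sufficient moment criterion in Proposition~3.13: it suffices to produce $\zeta>0$ and, for any $\varepsilon>0$, a $\delta>0$ with
$\sup_{m}\sup_{0<\xi\leq\delta}\E\big[\|Y_m((\tau_m+\xi)\wedge T)-Y_m(\tau_m)\|_{\V^*}^{\zeta}\big]\leq\varepsilon$
for all families of stopping times $\tau_m\leq T$. First I would fix such a family and write, using \eqref{2312-25eq3.17}, the increment $Y_m((\tau_m+\xi)\wedge T)-Y_m(\tau_m)$ as the sum of three pieces: the drift $\int_{\tau_m}^{(\tau_m+\xi)\wedge T}\PP_m A(s,Y_m(s))\,\d s$, the control term $\int_{\tau_m}^{(\tau_m+\xi)\wedge T}\PP_m\Phi(s,Y_m(s))\Q_m\,\d s$, and the stochastic integral $\int_{\tau_m}^{(\tau_m+\xi)\wedge T}\PP_m B(s,Y_m(s))\Q_m\,\d W(s)$. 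I will estimate each in $\V^*$-norm separately. For the $A$-term, since $\PP_m$ is a contraction on $\V^*$ (or can be controlled there) I would apply the triangle inequality for the Bochner integral, then Hölder in $s$ over an interval of length $\le\xi$ with exponents $\frac{\beta}{\beta-1}$ and $\beta$, to get a bound $\xi^{\frac{\beta-1}{\beta}}\big(\int_0^T\|A(s,Y_m(s))\|_{\V^*}^{\frac{\beta}{\beta-1}}\,\d s\big)^{\frac{\beta-1}{\beta}}$; taking expectation and invoking the growth condition (A.4) together with the uniform energy bound \eqref{2312-25eq3.18} gives a term of order $\xi^{\frac{\beta-1}{\beta}}$. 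For the $\Phi$-term, using $\H\hookrightarrow\V^*$ continuously and the growth bound (C.4) with the energy estimate, the $\V^*$-norm is bounded by $C\xi$ times something with finite expectation, hence order $\xi$. For the stochastic integral, I would take $\zeta=2$, use the Itô isometry in $\H$ (and $\H\hookrightarrow\V^*$), the fact that $\PP_m,\Q_m$ are contractions, and the growth condition (B.3) with $p=2$, to obtain $\E\|\cdots\|_{\V^*}^2\leq C\,\E\int_{\tau_m}^{(\tau_m+\xi)\wedge T}\|B(s,Y_m(s))\|_{L_2}^2\,\d s\leq C\xi\,\big(1+\E\sup_{[0,T]}\|Y_m\|_{\H}^2\big)\leq C\xi$.

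Combining the three estimates, with $\zeta=2$ throughout (raising the first two — which are already small powers of $\xi$ — to the second power only improves the rate, or one simply bounds the $\V^*$-norm of the sum by the sum of norms and uses $\zeta=1$ for those two pieces and the elementary inequality $(a+b+c)^2\le 3(a^2+b^2+c^2)$ after first passing through $\zeta=1$ bounds), I get
$\sup_m\sup_{0<\xi\le\delta}\E\big[\|Y_m((\tau_m+\xi)\wedge T)-Y_m(\tau_m)\|_{\V^*}^{2}\big]\le C\big(\delta+\delta^{\frac{2(\beta-1)}{\beta}}+\delta^2\big)\to 0$ as $\delta\to0$, uniformly in $m$. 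By Proposition~3.13 the sequence $\{Y_m\}$ satisfies the Aldous condition in $\V^*$, and therefore, as recorded in the paragraph preceding Proposition~3.13, the laws $\{\mathcal{L}(Y_m)\}$ form a tight sequence on $C([0,T];\V^*)$.

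The only genuinely delicate point is the treatment of the projection operators: one must ensure that $\PP_m$ acts as a bounded (ideally norm-$\le 1$) operator from $\V^*$ to $\V^*$ so that $\|\PP_m A(s,Y_m(s))\|_{\V^*}\le\|A(s,Y_m(s))\|_{\V^*}$, and similarly that composing with $\Q_m$ does not inflate the Hilbert–Schmidt norm of $B$; both follow from the construction of $\PP_m$ in \eqref{2312-25eq3.15} on the basis $\{e_j\}\subset\V$ together with the standard duality argument, but it is worth stating explicitly. Everything else is a routine application of Hölder, the Burkholder–Davis–Gundy (here only Itô isometry is needed since $\zeta=2$) inequality, the growth hypotheses (A.4), (B.3), (C.4), and the uniform a priori bound from Lemma~3.6, exactly in the spirit of the estimates for $I_1,I_2$ and $J_1,J_2$ already carried out in Lemma~3.10.
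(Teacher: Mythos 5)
Your proposal follows essentially the same route as the paper: write the increment at stopping times from the Galerkin equation \eqref{2312-25eq3.17} as drift, control and stochastic-integral pieces, estimate them with (A.4), (C.4), (B.3), H\"older and It\^o/BDG together with the uniform bound \eqref{2312-25eq3.18}, and conclude via the Aldous-type criterion; the only structural difference is that you use second moments (It\^o isometry) where the paper uses $\beta$-th moments with the $C_r$ and BDG inequalities, which is an equally valid choice. Two small corrections: since $f_\Phi$ and $g_B$ are only in $L^1(0,T)$, the bounds $C\xi$ you claim for the control and noise terms are not available; instead one gets uniform smallness of $\int_{\tau_m}^{(\tau_m+\xi)\wedge T} f_\Phi(s)\,\d s$ and $\int_{\tau_m}^{(\tau_m+\xi)\wedge T} g_B(s)\,\d s$ from the absolute continuity of the Lebesgue integral (exactly as in \eqref{2312-25eq3.56.1} and \eqref{2312-25eq3.56.2}), which still suffices for the criterion. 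Also, H\"older with exponents $\beta/(\beta-1)$ and $\beta$ over an interval of length $\xi$ yields the factor $\xi^{1/\beta}$, not $\xi^{(\beta-1)/\beta}$; this is immaterial for the conclusion. Your remark about $\PP_m$ acting boundedly on $\V^*$ is well taken and is implicitly used in the paper's estimate \eqref{2312-25eq3.53} as well.
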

\begin{proof}
  Let us consider stopping times $\{\tau_m\}$ such that $0\leq \tau_m\leq T$. It is follows from \eqref{2312-25eq3.17} that
\begin{eqnarray}\label{2312-25eq3.51}
 && Y_m((\tau_m+\xi)\wedge T)-Y_m(\tau_m\wedge T)   \nonumber \\
 &= & \int_{\tau_m\wedge T}^{(\tau_m+\xi)\wedge T}\PP_m A(s, Y_m(s))\d s  +\int_{\tau_m\wedge T}^{(\tau_m+\xi)\wedge T}  \PP_m \Phi(s, Y_m(s)) \d s    \nonumber \\
 &&    +\int_{\tau_m\wedge T}^{(\tau_m+\xi)\wedge T}  \PP_m B(s, Y_m(s))\Q_m\d W(s).
\end{eqnarray}

Applying the basic $C_r$-inequality, taking expectation on \eqref{2312-25eq3.51},  and then by the BDG inequality, we get
\begin{eqnarray}\label{2312-25eq3.52}
 && \E\left[\| Y_m((\tau_m+\xi)\wedge T)-Y_m(\tau_m) \|_{\V^*}^\beta\right]  \nonumber \\
 &\leq & C \E \bigg[\bigg(\int_{\tau_m\wedge T}^{(\tau_m+\xi)\wedge T} \|\PP_m A(s, Y_m(s))\|_{\V^*}\d s\bigg)^\beta  \bigg]   \nonumber \\
 &&   +  C \E \bigg[\bigg(\int_{\tau_m\wedge T}^{(\tau_m+\xi)\wedge T}  \|\PP_m \Phi(s, Y_m(s))\|_{\H} \d s \bigg)^\beta\bigg]   \nonumber \\
 &&    + C \E \bigg[\bigg( \int_{\tau_m\wedge T}^{(\tau_m+\xi)\wedge T}  \|\PP_m B(s, Y_m(s))\Q_m\|_{L_2}^2 \d s \bigg)^{\frac\beta2}\bigg]  \nonumber \\
 &=:&  J_1+J_2+J_3.
\end{eqnarray}
Firstly, we consider the term $J_1$ and estimate it using Hypothesis \ref{2312-22hypo1} (A4) and H\"{o}lder's inequality as
\begin{eqnarray}\label{2312-25eq3.53}
 && \E \bigg[\bigg(\int_{\tau_m\wedge T}^{(\tau_m+\xi)\wedge T} \|\PP_m A(s, Y_m(s))\|_{\V^*}\d s\bigg)^\beta  \bigg]  \nonumber \\
 &\leq & C\xi^{\frac1\beta} \E \bigg[ \int_{\tau_m\wedge T}^{(\tau_m+\xi)\wedge T} \|  A(s, Y_m(s))\|_{\V^*}^{\frac{\beta}{\beta-1}} \d s   \bigg]^{\beta-1}   \nonumber \\
 &\leq & C\xi^{\frac1\beta} \E \bigg[ \int_{\tau_m\wedge T}^{(\tau_m+\xi)\wedge T}  (f_A(s)+C\|Y_m(s)\|_\V^\beta)(1+\|Y_m(s)\|_\H^\alpha) \d s \bigg]^{\beta-1}   \nonumber \\
 &\leq&    C\xi^{\frac1\beta},
\end{eqnarray}
where we have used the fact that $f_A\in L^1(0,T; \R_+)$ and \eqref{2312-25eq3.18} in Lemma \ref{2312-25lem1}.

Secondly, we consider the term $J_2$ and we estimate it using  Hypothesis \ref{2312-22hypo3} (C4), H\"{o}lder's inequality, and Poincar\'{e} inequality (see Evans (2010)), as
\begin{eqnarray}\label{2312-25eq3.54}
 && \E \bigg[\bigg(\int_{\tau_m\wedge T}^{(\tau_m+\xi)\wedge T} \|\PP_m \Phi(s, Y_m(s))\|_{\H}\d s\bigg)^\beta  \bigg]  \nonumber \\
 &\leq &   \E \bigg[ \bigg( \int_{\tau_m\wedge T}^{(\tau_m+\xi)\wedge T}  \|\Phi(s, Y_m(s))\|_{\H}\d s\bigg)^\beta  \bigg]  \nonumber \\
 &\leq & C  \E \bigg[ \bigg( \int_{\tau_m\wedge T}^{(\tau_m+\xi)\wedge T}   (f_\Phi(s)+\|Y_m(s)\|_\H)    \d s\bigg)^\beta  \bigg]  \nonumber \\
 &\leq & C  \E \bigg[ \bigg( \int_{\tau_m\wedge T}^{(\tau_m+\xi)\wedge T}  (f_\Phi(s)+\|Y_m(s)\|_\V)    \d s\bigg)^\beta  \bigg]  \nonumber \\
 &\leq &  C  \E \bigg[ \bigg( \int_{\tau_m\wedge T}^{(\tau_m+\xi)\wedge T}   f_\Phi(s)     \d s\bigg)^\beta  + \bigg( \int_{\tau_m\wedge T}^{(\tau_m+\xi)\wedge T}  \|Y_m(s)\|_\V    \d s\bigg)^\beta  \bigg]  \nonumber \\
 &\leq &  C  \E \bigg[ \bigg( \int_{\tau_m\wedge T}^{(\tau_m+\xi)\wedge T}   f_\Phi(s)     \d s\bigg)^\beta  +  \int_{\tau_m\wedge T}^{(\tau_m+\xi)\wedge T}  \|Y_m(s)\|_\V^\beta    \d s   \bigg].
\end{eqnarray}
Noting that $f_\Phi\in L^1(0,T; \R_+)$ and  \eqref{2312-25eq3.18} in Lemma \ref{2312-25lem1}, by the absolute continuity of the Lebesgue integral, we get the existence of an $\varepsilon_1$ such that
\begin{eqnarray}\label{2312-25eq3.56.1}
 && \sup_{m\in \mathbb{N}} \sup_{0<\xi\leq \delta} \E \bigg[\bigg(\int_{\tau_m\wedge T}^{(\tau_m+\xi)\wedge T} \|\PP_m \Phi(s, Y_m(s))\|_{\H}\d s\bigg)^\beta  \bigg]
 \leq C \varepsilon_1.
\end{eqnarray}

Now, we consider the term $J_3$ and we estimate it using  Hypothesis \ref{2312-22hypo2} (B3), H\"{o}lder's inequality,  and It\^{o} isometry as
\begin{eqnarray}\label{2312-25eq3.55}
 && \E \bigg[\bigg( \int_{\tau_m\wedge T}^{(\tau_m+\xi)\wedge T}  \|\PP_m B(s, Y_m(s))\Q_m\|_{L_2}^2 \d s \bigg)^{\frac\beta2}\bigg]  \nonumber \\
 &\leq &  \E \bigg[\bigg( \int_{\tau_m\wedge T}^{(\tau_m+\xi)\wedge T}  g_B(s)(1+\|Y_m(s)\|_\H^2) \d s \bigg)^{\frac\beta2}\bigg]   \nonumber \\
 &\leq & C \E \bigg[\bigg( \int_{\tau_m\wedge T}^{(\tau_m+\xi)\wedge T}  g_B(s)  \d s \bigg)^{\frac\beta2}\bigg].
\end{eqnarray}
Noting that   $g\in L^1(0,T;\R_+)$, by the absolute continuity of the Lebesgue integral, we get the existence of an $\varepsilon_2$ such that
\begin{eqnarray}\label{2312-25eq3.56.2}
 && \sup_{m\in \mathbb{N}} \sup_{0<\xi\leq \delta}  \E \bigg[\bigg( \int_{\tau_m\wedge T}^{(\tau_m+\xi)\wedge T}  \|\PP_m B(s, Y_m(s))\Q_m\|_{\H}^2 \d s \bigg)^{\frac\beta2}\bigg]
 \leq C \varepsilon_2.
\end{eqnarray}
Combining the estimates \eqref{2312-25eq3.53}-\eqref{2312-25eq3.56.2}, one can conclude that the family $\{\mathcal{L}(Y_m)\}$ is tight in the space $C(0,T; \V^*)$.
\end{proof}

Set $\Gamma := C([0,T];\V^*)\cap L^\beta(0,T;\H)\times C([0,T];\U_1)$, 
where $\U_1$ is a Hilbert space such that the embedding $\U\subset\U_1$ is Hilbert-Schmidt.
In view of  Lemma \ref{2312-22lem2} and Lemma \ref{2312-22lem3.5}, for $W_m:=W$, $m\in\mathbb{N}$, we find that the family of the laws $\mathcal{L}(Y_m,W_m)$ of the random vectors $(Y_m,W_m)$ is tight in $\Gamma$.
From the Prokhorov's theorem  and a version of Skorokhod's representation theorem, see Theorem C.1 in Brzezniak and Hausenblas (2018) or Theorem A.1 in Nguyen, Tawri and Temam (2021), we can construct a new probability space $(\wi{\Omega},\wi{\mathcal{F}},\wi{\P})$ and a subsequence of random vectors $\{(\wi{Y}_m,\wi{W}_m)\}$ (still denoted by  the same)  and $(\wi{Y},\wi{W})$ on the space $\Gamma$  such that
\begin{enumerate}
	\item $\mathcal{L}(\wi{Y}_m,\wi{W}_m)=\mathcal{L}(\wi{Y},\wi{W})$, for all $m\in\mathbb{N}$;
	\item $(\wi{Y}_m,\wi{W}_m) \to (\wi{Y},\wi{W})$ in $\Gamma$ with probability $1$ on probability space  $(\wi{\Omega},\wi{\mathcal{F}},\wi{\P})$ as $m\to\infty$;
	\item $(\wi{W}(\wi{\omega}))=(W(\wi{\omega}))$ for all $\wi{\omega}\in\wi{\Omega}$;
\end{enumerate}
Using the definition of $\Gamma$,  we have
\begin{eqnarray}\label{2312-25eq3.57}
	\|\wi{Y}_m-\wi{Y}\|_{L^\beta(0,T;\H)}+\|\wi{Y}_m-\wi{Y}\|_{C([0,T];\V^*)}    \to   0, \ \wi{\P} \text{-a.s.}
\end{eqnarray}

Our next goal is to prove that $(\wi{Y},\wi{W})$ is a solution to the system  (\ref{2312-22eq01.1}).
 Let us denote the filtration  by $\{\wi{\mathcal{F}}_t\}_{t\geq 0}$ satisfying the usual conditions and generated by $\{\wi{Y}_m(s),\wi{Y}(s),\wi{W}(s):s\leq t\}$.

 Then, $\wi{W}$ is an $\{\wi{\mathcal{F}}_t\}$-cylindrical Wiener process on $\U$.  The equation \eqref{2312-25eq3.17} satisfied by the random vector $(Y_m,W_m)=(Y_m,W)$,  and hence it follows that
 \begin{eqnarray}\label{2312-25eq3.58}
 	\wi{Y}_m(t)
 &=&\PP_m x  +\int_{0}^{t}\PP_m A(s,\wi{Y}_m(s))\d s    +\int_{0}^{t}\PP_m \Phi(s,\wi{Y}_m(s))\d s  \nonumber	\\
 & &+\int_0^t\PP_m B(s,\wi{Y}_m(s))\Q_m\d\wi{W}(s).
 \end{eqnarray}
Following the Lemma \ref{2312-25lem1}, $\wi{Y}_m(t)$ also satisfy the  energy estimate, that is, for any $p\geq2,$  there is a constant $C_p>0$ such that
\begin{eqnarray}\label{2312-25eq3.59}
	\sup_{m\in\N}\left\{\wi{\E}\bigg[\sup_{0\leq t\leq T}\|\wi{Y}_m(t)\|_\H^p\bigg]  +  \wi{\E}\bigg[\int_0^T\|\wi{Y}_m(t)\|_\V^\beta\d t\bigg]^{\frac{p}{2}}\right\}
< C_p(1+\|x\|_\H^p).
\end{eqnarray}
Using the fact that $\|\cdot\|_{\H}$ and $\|\cdot\|_{\V}$  are lower semicontinuous in $\V^*$, the convergence \eqref{2312-25eq3.57} and Fatou's lemma yield
\begin{eqnarray}\label{2312-25eq3.60}
&&	\wi{\E}\bigg[\sup_{0\leq t\leq T}\|\wi{Y}(t)\|_\H^p\bigg]
    \leq \wi{\E}\bigg[\sup_{0\leq t\leq T}\liminf_{m\to\infty}\|\wi{Y}_m(t)\|_\H^p\bigg] \nonumber \\
&\leq& \wi{\E}\bigg[\liminf_{m\to\infty}\sup_{0\leq t\leq T}\|\wi{Y}_m(t)\|_\H^p\bigg]
     \leq  \liminf_{m\to\infty}	\wi{\E}\bigg[\sup_{0\leq t\leq T}\|\wi{Y}_m(t)\|_\H^p\bigg]  \nonumber \\
&<&C_p(1+\|x\|_\H^p).
\end{eqnarray}
Similarly, from \eqref{2312-25eq3.59}, one can also deduce that
\begin{eqnarray}\label{2312-25eq3.61}
	\wi{\E}\bigg[\int_0^T\|\wi{Y}(t)\|_\V^\beta\d t\bigg]^{\frac{p}{2}} < C_p(1+\|x\|_\H^p).
\end{eqnarray}

By virtue of \eqref{2312-25eq3.59}, and using Hypothesis \ref{2312-22hypo1} (A.4), Hypothesis \ref{2312-22hypo2} (B.3) and Hypothesis \ref{2312-22hypo3} (C.4), the following estimates hold.

\begin{lemma}\label{2312-22lem3.3}
	The following estimates hold:
	\begin{eqnarray}
		\sup_{m\in\N}\wi{\E}\bigg[\int_0^T\|\PP_m A(t,\wi{Y}_m(t))\|_{\V^*}^{\frac{\beta}{\beta-1}}\d t\bigg]  &<&\infty,  \label{2312-25eq3.62}\\
		\sup_{m\in\N}\wi{\E}\bigg[\int_0^T\|\PP_m\Phi(t,\wi{Y}_m(t))\|_{\H}^2\d t\bigg]                    &<&\infty, \label{2312-25eq3.63}\\
		 \sup_{m\in\N}\wi{\E}\bigg[\int_0^T\|\PP_mB(t,\wi{Y}_m(t))\Q_m\|_{L_2}^2\d t\bigg]               &<&\infty.  \label{2312-25eq3.64}
	\end{eqnarray}
\end{lemma}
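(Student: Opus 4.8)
The plan is to obtain all three bounds directly from the growth Hypotheses \ref{2312-22hypo1} (A.4), \ref{2312-22hypo2} (B.3) and \ref{2312-22hypo3} (C.4) combined with the uniform energy estimate \eqref{2312-25eq3.59} for the sequence $\{\wi Y_m\}$. The first step is to remove the finite-dimensional projections: since $\PP_m\big|_\H$ and $\Q_m$ are orthogonal projections on $\H$ and $\U$ respectively, they are contractions, so $\|\PP_m\Phi(t,u)\|_\H\le\|\Phi(t,u)\|_\H$ and $\|\PP_m B(t,u)\Q_m\|_{L_2}\le\|B(t,u)\|_{L_2}$; for the $A$-term I would use that, by the choice of the Galerkin basis $\{e_j\}\subset\V$, the operators $\PP_m$ are uniformly bounded on $\V^*$ (writing $\langle\PP_m f,v\rangle=\langle f,\PP_m v\rangle$ for $v\in\V$ and using the uniform $\V$-boundedness of $\PP_m$), which gives $\|\PP_m A(t,u)\|_{\V^*}\le C\|A(t,u)\|_{\V^*}$ with $C$ independent of $m$. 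After this reduction it suffices to bound the corresponding integrals with $A$, $\Phi$, $B$ in place of $\PP_m A$, $\PP_m\Phi$, $\PP_m B\Q_m$.

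For \eqref{2312-25eq3.63} and \eqref{2312-25eq3.64} I would apply (C.4) and (B.3) with $p=2$ to estimate $\|\Phi(t,\wi Y_m(t))\|_\H^2$ by $C\big(f_\Phi(t)+\|\wi Y_m(t)\|_\H^2\big)$ and $\|B(t,\wi Y_m(t))\|_{L_2}^2$ by $g_B(t)\big(1+\|\wi Y_m(t)\|_\H^2\big)$, then integrate over $[0,T]$, dominate $\|\wi Y_m(t)\|_\H^2$ by $\sup_{s\in[0,T]}\|\wi Y_m(s)\|_\H^2$, take $\wi\E$, and conclude using $f_\Phi,g_B\in L^1(0,T;\R_+)$ together with \eqref{2312-25eq3.59} for $p=2$. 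This part is entirely routine.

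For \eqref{2312-25eq3.62} I would start from (A.4), which yields
\begin{equation*}
\|A(t,\wi Y_m(t))\|_{\V^*}^{\frac{\beta}{\beta-1}}\le\big(f_A(t)+C\|\wi Y_m(t)\|_\V^\beta\big)\Big(1+\sup_{s\in[0,T]}\|\wi Y_m(s)\|_\H^\alpha\Big),
\end{equation*}
then integrate in $t$, take $\wi\E$, and split the right-hand side by the Cauchy--Schwarz inequality in $\wi\omega$ into the product of $\big(\wi\E\big[(1+\sup_{s\le T}\|\wi Y_m(s)\|_\H^\alpha)^2\big]\big)^{1/2}$ and $\big(\wi\E\big[(\|f_A\|_{L^1(0,T)}+C\int_0^T\|\wi Y_m(t)\|_\V^\beta\,\d t)^2\big]\big)^{1/2}$; by \eqref{2312-25eq3.59} (with $p=2\alpha\vee2$ for the first factor and $p=4$ for the second) and $f_A\in L^1(0,T;\R_+)$ both factors are bounded uniformly in $m$. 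The only mildly delicate point in the whole argument is the uniform $\V^*$-bound for the $\PP_m$, which is built into the standard Galerkin construction; beyond bookkeeping of which power $p$ to feed into \eqref{2312-25eq3.59}, no genuine obstacle arises, the lemma being a direct corollary of the a priori estimates already obtained.
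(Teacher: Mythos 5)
Your argument is correct and follows precisely the paper's own route: the paper offers no written proof beyond the remark that the lemma is a direct consequence of Hypothesis \ref{2312-22hypo1} (A.4), Hypothesis \ref{2312-22hypo2} (B.3), Hypothesis \ref{2312-22hypo3} (C.4) and the uniform estimate \eqref{2312-25eq3.59}, which is exactly what you carry out (with the correct bookkeeping of the exponents $p$ via Cauchy--Schwarz for the $A$-term). The only point deserving care is the one you flag yourself: the uniform $\V$-boundedness of the projections $\PP_m$ needed to pass from $\|\PP_m A(t,\wi{Y}_m(t))\|_{\V^*}$ to $\|A(t,\wi{Y}_m(t))\|_{\V^*}$ is not automatic for an arbitrary orthonormal basis of $\H$ contained in $\V$ and is implicitly assumed (via the choice of Galerkin basis) in the paper as well.
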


Applying the well-known Banach-Alaoglu theorem, by the above Lemma \ref{2312-22lem3.3}, we obtain the existence of
$\overline{Y}\in L^\beta(\wi{\Omega}\times[0,T];\V)$,
$\wi{A}\in L^{\frac{\beta}{\beta-1}}(\wi{\Omega}\times[0,T];\V^*)$,
$\wi{\Phi}\in L^{2}(  [0,T];\H)$,
$\wi{B}\in L^2(\wi{\Omega}\times[0,T];L_2(\U,\H))$  such that along a subsequence the following convergence hold, as $m\to\infty$:
\begin{eqnarray*}
 \wi{Y}_m      &\xrightarrow{w} &  \overline{Y}, ~~~ \text{ in } L^\beta(\wi{\Omega}\times[0,T];\V),    \label{2312-25eq3.65} \\
 \PP_m	A(\cdot,\wi{Y}_m(\cdot))  &\xrightarrow{w}& \wi{A}(\cdot),  \text{ in } L^{\frac{\beta}{\beta-1}}(\wi{\Omega}\times[0,T];\V^*),   \label{2312-25eq3.66}\\
 \PP_m	\Phi(\cdot,\wi{Y}_m(\cdot))  &\xrightarrow{w}& \wi{\Phi}(\cdot),  \text{ in } L^{2}(\wi{\Omega}\times[0,T];\H),   \label{2312-25eq3.67}\\
 \PP_m  B(\cdot,\wi{Y}_m(\cdot))\Q_m   &\xrightarrow{w}&   \wi{B}(\cdot), \text{ in } L^2(\wi{\Omega}\times[0,T];L_2(\U,\H)),    \label{2312-25eq3.68} \\
 \int_0^\cdot\PP_m B(s,\wi{Y}_m(s))\Q_m\d\wi{W}(s)   &\xrightarrow{w}&    \int_0^\cdot\wi{B}(s)\d\wi{W}(s),\text{ in } L^\infty(0,T;L^2(\wi{\Omega};\H)), \label{2312-25eq3.69}
\end{eqnarray*}
 Let us set
\begin{eqnarray}\label{2312-25eq3.70}
	\tilde{Y}(t):= x + \int_0^t\wi{A}(s)\d s + \int_0^t\wi{\Phi}(s)\d s  +  \int_0^t\wi{B}(s)\d\wi{W}(s).
\end{eqnarray}
Then, one can verify that
\begin{equation}\label{2312-25eq3.71}
 \wi{Y}=\overline{Y}=\tilde{Y},\; \wi{\P}\otimes\d t\text{-a.e.},
\end{equation}
where the first equality in \eqref{2312-25eq3.71} holds from the uniqueness of the limits. Moreover, by Theorem 4.2.5 in Liu and R\"{o}ckner (2015), we also know that $\tilde{Y}$ is an $\H$-valued continuous process. In view of \eqref{2312-25eq3.60}, $\wi{Y}$ is $\H$-valued, and by its continuity in $\V^*$,  $\wi{Y}$ is weakly continuous in $\H$. Therefore, $\tilde{Y}$ and $\wi{Y}$ are indistinguishable.

In the sequel, we prove our results in the newly constructed filtered probability space
$$(\wi{\Omega}, \wi{\mathcal{F}}, \{\wi{\mathcal{F}}_t\}_{t\geq 0}, \wi{\P}).$$
Now,  we drop the superscript notation, for example, we  write $\{\wi{Y}_m\}$ and  $\wi{Y}$ as $\{Y_m\}$ and $Y,$ respectively. Therefore, \eqref{2312-25eq3.57} can be rewritten as	\begin{align*}
	\|Y_m-Y\|_{L^\beta(0,T;\H)}+\|Y_m-Y\|_{C([0,T];\V^*)}  \to 0.
\end{align*}

Now, we recall some convergence  results from R\"{o}ckner, Shang and Zhang (2024).
\begin{lemma}\label{2312-22lem3.12}
\begin{description}
  \item[(i)] $\wi{B}(\cdot)=B(\cdot,Y(\cdot)), ~~ \P\otimes \d t$-a.e.
  \item[(ii)] Assume that the Hypothesis \ref{2312-22hypo1} (H.1) and (H.2)$'$ hold, the embedding $\V\subset\H$ is compact. Then $A(t,\cdot)$ is pseudo-monotone from $\V\to\V^*$ for any $t\in[0,T]$.
  \item[(iii)] If
\begin{eqnarray}\label{2312-25eq3.72}
	Y_m                &\xrightarrow{w}&        Y, ~~~~   \text{ in } \   L^\beta(\Omega\times[0,T];\V),\nonumber\\
	A(\cdot,Y_m(\cdot))       &\xrightarrow{w} & \wi{A}(\cdot),\  \text{ in }\    L^{\frac{\beta}{\beta-1}}(\Omega\times[0,T];\V^*),
\end{eqnarray}
and
\begin{eqnarray}
\liminf_{m\to\infty}\E\bigg[\int_0^T\langle A(t,Y_m(t)),Y_m(t)\rangle\d t\bigg]   &\geq&  \E\bigg[\int_0^T\langle\wi{ A}(t),Y(t)\rangle\d t\bigg], \label{2312-25eq3.73}
\end{eqnarray}
then $\wi{A}(\cdot)=A(\cdot,Y(\cdot)),~~~~~~~ \P\otimes \d t$-a.e.
\end{description}
\end{lemma}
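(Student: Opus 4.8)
The plan is to prove the three assertions of Lemma~\ref{2312-22lem3.12} in the order stated, since each one feeds into the next. For part~(i), I would start from the weak convergence $\PP_m B(\cdot,\wi Y_m(\cdot))\Q_m\xrightarrow{w}\wi B(\cdot)$ in $L^2(\wi\Omega\times[0,T];L_2(\U,\H))$ together with the strong convergence $\|Y_m-Y\|_{L^\beta(0,T;\H)}\to 0$, $\wi\P$-a.s. Along a further subsequence $Y_m(t)\to Y(t)$ in $\H$ for a.e.\ $t$, $\wi\P$-a.s., so by Hypothesis~\ref{2312-22hypo2}~(B.2) we get $\|B(t,Y_m(t))-B(t,Y(t))\|_{L_2}\to 0$ for a.e.\ $(t,\omega)$; combined with the uniform bound \eqref{2312-25eq3.64} and Vitali's theorem (uniform integrability from (B.3) and \eqref{2312-25eq3.59}), $\PP_m B(\cdot,Y_m(\cdot))\Q_m\to B(\cdot,Y(\cdot))$ strongly in $L^2(\wi\Omega\times[0,T];L_2(\U,\H))$, and by uniqueness of weak limits $\wi B=B(\cdot,Y(\cdot))$. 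The projections $\PP_m,\Q_m$ are handled by noting $\PP_m\to I$, $\Q_m\to I$ strongly and using the Hilbert--Schmidt bound.

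Part~(ii) is essentially a citation of R\"ockner, Shang and Zhang (2024): I would verify the pseudo-monotonicity directly from Definition~\ref{2312-22def1}. Take $u_n\rightharpoonup u$ weakly in $\V$ with $\liminf_n\langle A(t,u_n),u_n-u\rangle\ge 0$. By the compact embedding $\V\subset\H$, $u_n\to u$ strongly in $\H$; boundedness in $\V$ plus (A.4) gives a weakly convergent subsequence $A(t,u_n)\rightharpoonup \xi$ in $\V^*$. Using (A.2)$'$ with $r=\sup_n\|u_n\|_\V\vee\|u\|_\V$, one has $\langle A(t,u_n)-A(t,v),u_n-v\rangle\le M_t(r)\|u_n-v\|_\H^2$ for any fixed $v$ in the ball; then a monotonicity-trick argument (test with $v=u+\lambda w$, let $n\to\infty$ using strong $\H$-convergence to kill the $M_t(r)\|u_n-v\|_\H^2$ terms, then $\lambda\to 0$ using (A.1) hemicontinuity) identifies $\xi=A(t,u)$ and yields \eqref{3.3}. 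Since the limit is independent of the subsequence, the full sequence converges.

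For part~(iii) I would use the pseudo-monotonicity from part~(ii) in an integrated (Minty-type) form over $\wi\Omega\times[0,T]$. Set $\Psi(v):=\E\int_0^T\langle A(t,Y_m(t)),Y_m(t)-v(t)\rangle\,\d t$ for $v\in L^\beta(\wi\Omega\times[0,T];\V)$. From the hypotheses \eqref{2312-25eq3.72} and \eqref{2312-25eq3.73}, $\liminf_m \E\int_0^T\langle A(t,Y_m(t)),Y_m(t)-Y(t)\rangle\,\d t\ge 0$. The standard argument then runs: for arbitrary $v$, write $\langle A(t,Y_m),Y_m-v\rangle=\langle A(t,Y_m),Y_m-Y\rangle+\langle A(t,Y_m),Y-v\rangle$; take $\limsup$, use the weak convergence of $A(\cdot,Y_m(\cdot))$ to $\wi A$ against the fixed element $Y-v$, and the liminf bound, to obtain $\limsup_m\Psi(v)\le\E\int_0^T\langle\wi A(t),Y(t)-v(t)\rangle\,\d t$. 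Pseudo-monotonicity of $A(t,\cdot)$ (applied pointwise in $(t,\omega)$ along the subsequence realizing the limsup, then integrated via Fatou and the growth bound (A.4)) upgrades this to $\E\int_0^T\langle\wi A(t)-A(t,Y(t)),Y(t)-v(t)\rangle\,\d t\ge 0$ for all $v$; choosing $v=Y\pm\lambda\varphi$ and letting $\lambda\to 0$ gives $\wi A=A(\cdot,Y(\cdot))$, $\wi\P\otimes\d t$-a.e.

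The main obstacle I expect is the passage in part~(iii) from the global (integrated) pseudo-monotone inequality to the pointwise identification, since the pseudo-monotonicity in~(ii) is stated for the deterministic operator $A(t,\cdot):\V\to\V^*$ at a fixed point, and one must carefully justify — via a measurable-selection or subsequence-extraction argument combined with the uniform estimates \eqref{2312-25eq3.59} and growth bound (A.4) for equi-integrability — that the liminf condition \eqref{2312-25eq3.73} can be localized so that the pointwise pseudo-monotone conclusion is applicable and then reintegrated. The bookkeeping with the projections $\PP_m$ in the weak limit $\PP_m A(\cdot,Y_m(\cdot))\xrightarrow{w}\wi A$ (versus $A(\cdot,Y_m(\cdot))\xrightarrow{w}\wi A$) also needs a short argument that $\PP_m\to I$ strongly on $\V^*$ along a dense set, which is routine but must be said.
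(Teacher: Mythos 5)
The paper itself offers no argument here: its ``proof'' of Lemma \ref{2312-22lem3.12} is a one-line citation to Lemmas 2.14--2.16 of R\"ockner, Shang and Zhang (2024), so you are attempting to reconstruct the cited proofs rather than mirror anything in the text. Your parts (i) and (ii) are correct in outline and essentially coincide with those proofs: for (i), a.e.\ convergence of $Y_m$ to $Y$ in $\H$ plus (B.2), uniform integrability from (B.3) and \eqref{2312-25eq3.59} via Vitali, strong convergence of $\PP_m,\Q_m$ to the identity, and uniqueness of weak limits; for (ii), the Minty-type argument combining compactness of $\V\subset\H$, (A.2)$'$, boundedness from (A.4) and hemicontinuity (A.1), first showing $\lim_n\langle A(t,u_n),u_n-u\rangle=0$ and then identifying the weak limit of $A(t,u_n)$.

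The genuine gap is in part (iii), precisely at the step you yourself flag. Pseudo-monotonicity of $A(t,\cdot)$ is a statement at a fixed $(t,\omega)$ and requires there both weak convergence (or at least $\V$-boundedness) of $Y_m(t,\omega)$ and the pointwise condition $\liminf_m\langle A(t,Y_m(t)),Y_m(t)-Y(t)\rangle\geq 0$. Neither follows from the hypotheses of (iii): \eqref{2312-25eq3.72} gives only weak convergence in the Bochner space, and \eqref{2312-25eq3.73} is an integrated liminf that does not localize, since the integrand may be very negative on small sets. Consequently the phrase ``applied pointwise in $(t,\omega)$ along the subsequence realizing the limsup, then integrated via Fatou'' is not a proof: Fatou for a limsup needs an integrable majorant of the positive parts, and extracting a pointwise liminf from \eqref{2312-25eq3.73} needs control of the negative parts of $\langle A(t,Y_m),Y_m-Y\rangle$, neither of which you produce. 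Closing exactly this gap is the content of Lemma 2.16 in R\"ockner--Shang--Zhang: their argument additionally imports the $\wi\P$-a.s.\ strong convergence $Y_m\to Y$ in $L^\beta(0,T;\H)$ from the Skorokhod step (\eqref{2312-25eq3.57}), the uniform estimates \eqref{2312-25eq3.59}, and (A.2)--(A.4) to build uniformly integrable auxiliary integrands and to localize on large sets where the $\V$-norms are bounded, before the pointwise pseudo-monotonicity can be invoked and the localization removed. So your proof of (iii) must either import these extra facts from the construction (they are not part of the stated hypotheses of (iii)) or, equivalently, prove that the Nemytskii operator of $A$ is pseudo-monotone between the Bochner spaces; your closing Minty step with $v=Y\pm\lambda\varphi$ is fine, but only after the inequality $\E\int_0^T\langle\wi A(t)-A(t,Y(t)),Y(t)-v(t)\rangle\,\d t\geq 0$ for all $v$ has actually been established.
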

\begin{proof}
The proof of (i)-(iii) in  this lemma can be founded in Lemma 2.14-Lemma 2.16 in  R\"{o}ckner, Shang and Zhang (2024).
\end{proof}

\begin{lemma}\label{2312-22lem3.10}
	$\wi{\Phi}(\cdot) =\Phi(\cdot,Y(\cdot)),\;\P\otimes \d t$-a.e.
\end{lemma}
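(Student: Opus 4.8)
The plan is to identify the weak limit $\wi{\Phi}$ of the sequence $\PP_m\Phi(\cdot,Y_m(\cdot))$ in $L^2(\wi\Omega\times[0,T];\H)$ with $\Phi(\cdot,Y(\cdot))$, exploiting the \emph{Lipschitz continuity} of $\Phi$ in its spatial variable (Hypothesis \ref{2312-22hypo3} (C.1)), which is much stronger than the merely pseudo-monotone structure available for $A$. First I would recall from \eqref{2312-25eq3.57} (after dropping tildes) that $\|Y_m-Y\|_{L^\beta(0,T;\H)}\to 0$, $\wi\P$-a.s.; passing to a further subsequence we may assume $\|Y_m(t)-Y(t)\|_\H\to 0$ for $\d t$-a.e.\ $t$, $\wi\P$-a.s. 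Since $\|\PP_m\|_{\mathcal L(\H)}\le 1$, (C.1) gives the pointwise bound
\begin{equation*}
\|\PP_m\Phi(s,Y_m(s))-\PP_m\Phi(s,Y(s))\|_\H^2\le \|\Phi(s,Y_m(s))-\Phi(s,Y(s))\|_\H^2\le \alpha\|Y_m(s)-Y(s)\|_\H^2,
\end{equation*}
so $\PP_m\Phi(s,Y_m(s))-\PP_m\Phi(s,Y(s))\to 0$ for a.e.\ $(s,\omega)$. Moreover $\PP_m\Phi(s,Y(s))\to\Phi(s,Y(s))$ in $\H$ for every $s$ with $\Phi(s,Y(s))\in\H$, because $\{e_j\}$ is an orthonormal basis of $\H$; hence $\PP_m\Phi(s,Y_m(s))\to\Phi(s,Y(s))$ for $\wi\P\otimes\d t$-a.e.\ $(s,\omega)$.

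Next I would upgrade this a.e.\ convergence to weak convergence in $L^2(\wi\Omega\times[0,T];\H)$ via a uniform integrability / Vitali argument. By the growth condition (C.4) with $p=2$, $\|\Phi(s,Y_m(s))\|_\H^2\le C(f_\Phi(s)+\|Y_m(s)\|_\H^2)$, and the uniform energy estimate \eqref{2312-25eq3.59} gives $\sup_m\wi\E\big[\sup_{t\le T}\|Y_m(t)\|_\H^{2q}\big]<\infty$ for every $q\ge 1$; together with $f_\Phi\in L^1(0,T;\R_+)$ this yields $\sup_m\wi\E\big[\int_0^T\|\PP_m\Phi(s,Y_m(s))\|_\H^{2q}\,\d s\big]<\infty$ for, say, $q=2$, so the family $\{\PP_m\Phi(\cdot,Y_m(\cdot))\}$ is bounded in $L^4(\wi\Omega\times[0,T];\H)$ and hence uniformly integrable in the $L^2$ sense. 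Combining uniform integrability with the a.e.\ convergence just established, Vitali's convergence theorem gives $\PP_m\Phi(\cdot,Y_m(\cdot))\to\Phi(\cdot,Y(\cdot))$ \emph{strongly} in $L^2(\wi\Omega\times[0,T];\H)$. In particular this is also weak convergence in that space, and by uniqueness of weak limits (the weak limit having been named $\wi\Phi$ in the displayed convergences preceding \eqref{2312-25eq3.70}) we conclude $\wi\Phi(\cdot)=\Phi(\cdot,Y(\cdot))$, $\wi\P\otimes\d t$-a.e., which is the claim.

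The main obstacle is the measurability/integrability bookkeeping needed to justify that $\Phi(s,Y(s))\in\H$ for a.e.\ $s$ and that the a.e.\ limit is a legitimate element of $L^2(\wi\Omega\times[0,T];\H)$ — i.e.\ making sure the Skorokhod-space convergence in $L^\beta(0,T;\H)$ really does deliver the pointwise-in-$s$ convergence $\|Y_m(s)-Y(s)\|_\H\to 0$ along a subsequence, and that $Y(s)\in\V\subset\H$ holds for a.e.\ $s$ (which follows from \eqref{2312-25eq3.61}). Once these are in place, no monotonicity trick is needed: unlike the identification of $\wi A$ in Lemma \ref{2312-22lem3.12}(iii), which requires a liminf inequality and pseudo-monotonicity, here the Lipschitz bound (C.1) makes $\Phi$ act continuously on $\H$-convergent sequences, so strong convergence — hence identification of the limit — comes essentially for free. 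I would also remark that an alternative, slightly cheaper route avoids Vitali: test the weak convergence $\PP_m\Phi(\cdot,Y_m(\cdot))\xrightarrow{w}\wi\Phi$ against an arbitrary $G\in L^2(\wi\Omega\times[0,T];\H)$, split $\PP_m\Phi(s,Y_m(s))=\big(\PP_m\Phi(s,Y_m(s))-\PP_m\Phi(s,Y(s))\big)+\PP_m\Phi(s,Y(s))$, bound the first bracket in $L^2$ by $\sqrt\alpha\,\|Y_m-Y\|_{L^2(\wi\Omega\times[0,T];\H)}\to 0$ (using the $\H$-energy bound to pass from $L^\beta$-in-time to $L^2$), and handle the second term by dominated convergence using $\|\PP_m\Phi(s,Y(s))\|_\H\le\|\Phi(s,Y(s))\|_\H\in L^2$; this directly gives $\wi\Phi=\Phi(\cdot,Y(\cdot))$.
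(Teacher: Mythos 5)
Your proposal is correct and follows essentially the same route as the paper: a.e.\ convergence of $Y_m\to Y$ in $\H$ along a subsequence extracted from the $L^\beta(0,T;\H)$ convergence, the splitting into $\PP_m\big(\Phi(\cdot,Y_m)-\Phi(\cdot,Y)\big)$ and $(I-\PP_m)\Phi(\cdot,Y)$, the Lipschitz/growth properties of $\Phi$ together with a Vitali (uniform integrability) and dominated convergence argument to obtain strong convergence in $L^2(\wi{\Omega}\times[0,T];\H)$, and identification with $\wi{\Phi}$ by uniqueness of limits. If anything, your explicit appeal to (C.1) is the cleaner justification of the step where the paper cites (C.2).
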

\begin{proof}
	We know that $\|Y_m-Y\|_{L^\beta(0,T;\H)}\to0,\ \P$-a.s. Using   \eqref{2312-25eq3.59} and \eqref{2312-25eq3.60},  and Vitali's convergence theorem, we find
\begin{eqnarray}\label{2312-25eq3.74}
	\lim_{m\to\infty}\E\bigg[\int_0^T\|Y_m(t)-Y(t)\|_\H^\rho\d t\bigg]=0,  \ \text{ for all } \ \rho \in[1,\beta],
\end{eqnarray}
since for all $1<p<\infty$
\begin{eqnarray*}
	\E\left[\left(\int_0^T\|Y_m(t)\|_{\H}^{\beta}\d t\right)^p\right]
\leq  T^p\E\left[\sup_{t\in[0,T]}\|Y_m(t)\|_{\H}^{p\beta}\right]
<\infty.
\end{eqnarray*}
Therefore, along a subsequence $\{Y_m\}$ (still denoting by the same index), we have the following convergence:
\begin{eqnarray}\label{2312-25eq3.75}
	\lim_{m\to\infty} \|Y_m(t,\omega)-Y(t,\omega)\|_\H=0,\text{ a.e. } (t,\omega).
\end{eqnarray}
Now,   we can obtain
\begin{eqnarray}\label{2312-25eq3.76}
 && \E\bigg[\int_0^T \|\PP_m\Phi(t,Y_m(t))-\Phi(t,Y(t))\|_\H^2 \d t\bigg]   \nonumber\\
 &\leq&   \E\bigg[\int_0^T \|\PP_m\Phi(t,Y_m(t))- \PP_m\Phi(t,Y(t))\|_\H^2 \d t   \nonumber\\
 && ~~~~  +    \int_0^T \| \Phi(t,Y(t)) -  \PP_m   \Phi(t,Y(t))\|_\H^2 \d t  \bigg]   \nonumber\\
 &\leq&    \E\bigg[\int_0^T \|\PP_m(\Phi(t,Y_m(t))- \Phi(t,Y(t))) \|_\H^2 \d t   \nonumber\\
 &&    +   \E\bigg[  \int_0^T \| (I-  \PP_m)  \Phi(t,Y(t))\|_\H^2 \d t  \bigg].
\end{eqnarray}
Due to \eqref{2312-25eq3.75}, using Hypothesis \ref{2312-22hypo1} (C.2), we have
\begin{eqnarray}\label{2312-25eq3.77}
	\lim_{m\to\infty}  \E\bigg[\int_0^T \|\PP_m(\Phi(t,Y_m(t))- \Phi(t,Y(t))) \|_\H^2 \d t =0.
\end{eqnarray}
It follows from the Lebesgue dominated convergence theorem and $\lim_{m\to\infty}\| I-  \PP_m \|=0$ that
\begin{eqnarray}\label{2312-25eq3.78}
	 \lim_{m\to\infty} \E\bigg[  \int_0^T \| (I-  \PP_m)  \Phi(t,Y(t))\|_\H^2 \d t  \bigg]  =0.
\end{eqnarray}
Substituting \eqref{2312-25eq3.77}-\eqref{2312-25eq3.78} into \eqref{2312-25eq3.76}, we obtain
\begin{eqnarray}\label{2312-25eq3.79}
 && \lim_{m\to\infty}\E\bigg[\int_0^T \|\PP_m\Phi(t,Y_m(t))-\Phi(t,Y(t))\|_\H^2 \d t\bigg]  =0.
\end{eqnarray}
Combining \eqref{2312-25eq3.78} with \eqref{2312-25eq3.70}, by the uniqueness of limit, we can conclude our desired result.
\end{proof}

In the following theorem, we will establish that the limit $Y$ of the approximating sequence $\{Y_m\}$ obtained above is a probabilistically weak solution to the system (\ref{2312-22eq01.1}).

\begin{theorem}\label{2312-22TH3.11}
  There exists a probabilistically weak solution to system  (\ref{2312-22eq01.1}), which satisfies the following energy estimate
\begin{eqnarray}\label{2312-25eq3.80}
 \E\bigg[\sup_{0\leq t\leq T}\|Y(t)\|_{\H}^p\bigg]+\E\bigg[\bigg(\int_{0}^{T}\|Y(t)\|_{\V}^\beta\d t\bigg)^{\frac{p}{2}}\bigg]  \leq  C_p(1+\|x\|_{\H}^p).
\end{eqnarray}
\end{theorem}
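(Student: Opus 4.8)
The plan is to show that the process $\wi{Y}$ produced by the Skorokhod representation — which by \eqref{2312-25eq3.70}--\eqref{2312-25eq3.71} is indistinguishable from $\tilde{Y}(t)=x+\int_0^t\wi{A}(s)\,\d s+\int_0^t\wi{\Phi}(s)\,\d s+\int_0^t\wi{B}(s)\,\d\wi{W}(s)$ — together with the filtration $\{\wi{\mathcal{F}}_t\}_{t\geq0}$ and the cylindrical Wiener process $\wi{W}$ constructed above, forms a probabilistically weak solution in the sense of Definition \ref{2312-25def2.1}. Three ingredients are already in place: Lemma \ref{2312-22lem3.10} and Lemma \ref{2312-22lem3.12}(i) give $\wi{\Phi}(\cdot)=\Phi(\cdot,Y(\cdot))$ and $\wi{B}(\cdot)=B(\cdot,Y(\cdot))$, $\wi{\P}\otimes\d t$-a.e.; the path regularity $Y(\cdot,\omega)\in\mathrm{C}([0,T];\H)\cap L^\beta(0,T;\V)$ follows from \eqref{2312-25eq3.61} and the application of Theorem 4.2.5 in Liu and R\"{o}ckner (2015) recorded above; and the energy estimate \eqref{2312-25eq3.80} is exactly \eqref{2312-25eq3.60}--\eqref{2312-25eq3.61}. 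Hence everything reduces to identifying the drift limit, i.e.\ proving $\wi{A}(\cdot)=A(\cdot,Y(\cdot))$, $\wi{\P}\otimes\d t$-a.e.

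For this I would use the pseudo-monotonicity of $A(t,\cdot)$ (Lemma \ref{2312-22lem3.12}(ii), valid by the compactness of $\V\subset\H$ and (A.1), (A.2)$'$) through the criterion of Lemma \ref{2312-22lem3.12}(iii). Since $\wi{Y}_m\xrightarrow{w}Y$ in $L^\beta(\wi{\Omega}\times[0,T];\V)$ and, as in R\"{o}ckner, Shang and Zhang (2024), $A(\cdot,\wi{Y}_m(\cdot))\xrightarrow{w}\wi{A}(\cdot)$ in $L^{\frac{\beta}{\beta-1}}(\wi{\Omega}\times[0,T];\V^*)$ (here one uses $\langle\PP_m A(s,\wi{Y}_m),\wi{Y}_m\rangle=\langle A(s,\wi{Y}_m),\wi{Y}_m\rangle$ because $\wi{Y}_m(s)\in\H_m$, together with the $L^{\frac{\beta}{\beta-1}}$-bound from (A.4) and \eqref{2312-25eq3.59}), it suffices to establish the inequality \eqref{2312-25eq3.73}. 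I would derive it by comparing two It\^o expansions of the squared $\H$-norm. Applying It\^o's formula to $\|\wi{Y}_m(t)\|_\H^2$ in \eqref{2312-25eq3.58} over $[0,T]$ and taking expectations (the stochastic term is a genuine martingale by \eqref{2312-25eq3.59}) yields an identity expressing $\wi{\E}\int_0^T 2\langle A(s,\wi{Y}_m),\wi{Y}_m\rangle\,\d s$ in terms of $\wi{\E}\|\wi{Y}_m(T)\|_\H^2$, $\|\PP_m x\|_\H^2$, $\wi{\E}\int_0^T 2(\PP_m\Phi(s,\wi{Y}_m),\wi{Y}_m)\,\d s$ and $\wi{\E}\int_0^T\|\PP_m B(s,\wi{Y}_m)\Q_m\|_{L_2}^2\,\d s$; applying It\^o's formula to $\|\tilde{Y}(t)\|_\H^2$ (Theorem 4.2.5 in Liu and R\"{o}ckner (2015)) and inserting $\wi{\Phi}=\Phi(\cdot,Y)$, $\wi{B}=B(\cdot,Y)$ yields the corresponding identity for $\wi{\E}\int_0^T 2\langle\wi{A}(s),Y(s)\rangle\,\d s$ in terms of $\wi{\E}\|Y(T)\|_\H^2$, $\|x\|_\H^2$, $\wi{\E}\int_0^T 2(\Phi(s,Y),Y)\,\d s$ and $\wi{\E}\int_0^T\|B(s,Y)\|_{L_2}^2\,\d s$.

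Now I would pass to the limit term by term. One has $\liminf_{m\to\infty}\wi{\E}\|\wi{Y}_m(T)\|_\H^2\geq\wi{\E}\|Y(T)\|_\H^2$, because $\wi{Y}_m(T)\rightharpoonup Y(T)$ weakly in $L^2(\wi{\Omega};\H)$ (from $\wi{Y}_m\to Y$ in $\mathrm{C}([0,T];\V^*)$ $\wi{\P}$-a.s.\ and the uniform $L^2$-bound of Lemma \ref{2312-25lem1}, identifying the limit in $\V^*$) and the $L^2(\wi{\Omega};\H)$-norm is weakly lower semicontinuous; the remaining three quantities converge \emph{with equality}: $\PP_m x\to x$ in $\H$; $\wi{Y}_m\to Y$ strongly in $L^2(\wi{\Omega}\times[0,T];\H)$ (Vitali's theorem, using Lemma \ref{2312-25lem1} and \eqref{2312-25eq3.75}) while $\PP_m\Phi(\cdot,\wi{Y}_m)\to\Phi(\cdot,Y)$ strongly in $L^2(\wi{\Omega}\times[0,T];\H)$ by Lemma \ref{2312-22lem3.10}, so the $\Phi$-terms converge; and $\PP_m B(\cdot,\wi{Y}_m)\Q_m\to B(\cdot,Y)$ strongly in $L^2(\wi{\Omega}\times[0,T];L_2(\U,\H))$ by (B.2), (B.3), \eqref{2312-25eq3.75}, the strong (hence Hilbert--Schmidt on fixed operators) convergences $\PP_m\to I$, $\Q_m\to I$, and Vitali's theorem, so the $B$-terms converge. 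Feeding these limits into the first identity, taking $\liminf_m$, and recognising the resulting right-hand side as $\wi{\E}\int_0^T 2\langle\wi{A}(s),Y(s)\rangle\,\d s$ via the second identity, one obtains \eqref{2312-25eq3.73}; Lemma \ref{2312-22lem3.12}(iii) then gives $\wi{A}=A(\cdot,Y)$, and substituting into \eqref{2312-25eq3.70} shows that $((\wi{\Omega},\wi{\mathcal{F}},\{\wi{\mathcal{F}}_t\}_{t\geq0},\wi{\P}),Y,\wi{W})$ satisfies \eqref{3.12}, while \eqref{2312-25eq3.80} has already been proved.

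I expect the only genuine obstacle to be the identification $\wi{A}=A(\cdot,Y)$, and within it the point that the $B$- and $\Phi$-terms in the energy identity must pass to the limit with equality rather than merely with lower semicontinuity — this is where the compact embedding $\V\subset\H$, the uniform moment estimates of Lemma \ref{2312-25lem1}, the continuity hypothesis (B.2) and Vitali's theorem are essential. The fully local monotone structure, in which $\rho$ and $\eta$ may both be non-zero, does not cause extra difficulty here because it has already been absorbed into the pseudo-monotonicity of $A(t,\cdot)$ supplied by Lemma \ref{2312-22lem3.12}(ii).
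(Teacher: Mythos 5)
Your proposal is correct and follows essentially the same route as the paper: it derives the two It\^o energy identities (the paper's \eqref{2312-25eq3.81}--\eqref{2312-25eq3.82}), uses lower semicontinuity of the $\H$-norm together with the convergence \eqref{2312-25eq3.57} and Fatou's lemma for the terminal term, uses Lemma \ref{2312-22lem3.10} and Lemma \ref{2312-22lem3.12}(i) to pass the $\Phi$- and $B$-terms to the limit, and then invokes Lemma \ref{2312-22lem3.12}(iii) to identify $\wi{A}=A(\cdot,Y)$, with the energy estimate \eqref{2312-25eq3.80} coming from \eqref{2312-25eq3.60}--\eqref{2312-25eq3.61}. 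Your write-up merely fills in the term-by-term convergence details that the paper leaves implicit.
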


\begin{proof}
  We will show that the limit $Y$ of $\{Y_m\}$ obtained above is a solution to  system  (\ref{2312-22eq01.1}). In order to prove this , we only need to verify  \eqref{2312-25eq3.72}.
  Taking into account the equations \eqref{2312-25eq3.58} and \eqref{2312-25eq3.70} satisfied respectively by $Y_m$ and $Y$, applying It\^{o}'s formula and taking expectations separately we get
 \begin{eqnarray}\label{2312-25eq3.81}
   \E[\|Y_m(t)\|_\H^2]
 &=& \|\PP_m x\|_\H^2  + 2 \E\bigg[\int_0^T \Big[\langle A(t, Y_m(t)), Y_m(t) \rangle   +  (\Phi(t, Y_m(t)), Y_m(t)) \Big] dt \bigg] \nonumber\\
 && + \E\bigg[ \int_0^T \|\PP_m  B(t, Y_m(t)) \Q_m\|^2_{L_2} dt \bigg],
\end{eqnarray}
  and
 \begin{eqnarray}\label{2312-25eq3.82}
   \E[\|Y(t)\|_\H^2]
 &=& \| x\|_\H^2  + 2 \E\bigg[\int_0^T \Big[\langle \wi A(t), Y(t) \rangle   +  (\wi\Phi(t), Y(t)) \Big] dt \bigg] \nonumber\\
 && + \E\bigg[ \int_0^T \| \wi B(t) \Q_m\|^2_{L_2} dt \bigg].
\end{eqnarray}
  Using the convergence \eqref{2312-25eq3.57}, the lower semi-continuity  of $\|\cdot\|_\H$ in $\V^*$ and Fatou's lemma, we obtain
  \begin{eqnarray}\label{2312-25eq3.83}
    \E[\|Y(t)\|_\H^2] \leq \E[\liminf_{m\to\infty} \|Y_m(t)\|_\H^2 ]  \leq \liminf_{m\to\infty} \E[ \|Y_m(t)\|_\H^2 ].
\end{eqnarray}
  Using the Lemma \ref{2312-22lem3.12}   and Lemma \ref{2312-22lem3.10}, and comparing \eqref{2312-25eq3.81} and \eqref{2312-25eq3.82}, we ensure that \eqref{2312-25eq3.72} holds.   Furthermore, the energy estimate \eqref{2312-25eq3.80} for $Y$ follows from \eqref{2312-25eq3.60} and \eqref{2312-25eq3.61}.
\end{proof}

\begin{theorem}\label{2312-22TH3.12}
	Under the assumptions  (A.2), (B.1) and (C.2) in Hypothesis \ref{2312-22hypo1}-\ref{2312-22hypo3}, the pathwise uniqueness holds for the solutions to the system  (\ref{2312-22eq01.1}).
	\end{theorem}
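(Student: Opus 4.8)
The statement concerns pathwise uniqueness, so I would take an arbitrary stochastic basis $((\Omega,\mathcal F,\{\mathcal F_t\}_{t\ge0},\P),W)$ and two solutions $X_1,X_2$ of \eqref{2312-22eq01.1} with $X_1(0)=X_2(0)=x$, each having $\P$-a.s.\ paths in $\mathrm{C}([0,T];\H)\cap L^\beta(0,T;\V)$. Set $Z:=X_1-X_2$, which solves, in the Gelfand-triple sense, $\d Z=[A(\cdot,X_1)-A(\cdot,X_2)+\Phi(\cdot,X_1)-\Phi(\cdot,X_2)]\,\d t+[B(\cdot,X_1)-B(\cdot,X_2)]\,\d W$ with $Z(0)=0$; by the growth conditions (A.4), (B.3), (C.4) and the a priori path regularity of $X_1,X_2$ the drift of $Z$ lies in $L^{\beta/(\beta-1)}(0,T;\V^*)$ and its diffusion in $L^2(0,T;L_2(\U,\H))$ $\P$-a.s., so the It\^{o} formula for $\|\cdot\|_\H^2$ (Theorem 4.2.5 in Liu and R\"{o}ckner (2015)) yields
\[ \|Z(t)\|_\H^2=\int_0^t\!\Big[2\langle A(s,X_1)-A(s,X_2),Z(s)\rangle+2(\Phi(s,X_1)-\Phi(s,X_2),Z(s))+\|B(s,X_1)-B(s,X_2)\|_{L_2}^2\Big]\,\d s+M(t), \]
where $M(t):=2\int_0^t\big((B(s,X_1)-B(s,X_2))\,\d W(s),Z(s)\big)$ is a continuous local martingale. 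Inserting (A.2), (B.1) and (C.2) bounds the $\d s$-integrand by $\xi'(s)\|Z(s)\|_\H^2$ with $\xi(t):=\int_0^t\big[2f_A(s)+f_\Phi(s)+2\rho(X_1(s))+2\eta(X_2(s))\big]\,\d s$.

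The decisive point — and precisely where the \emph{fully} local monotone structure is accommodated — is that $\xi$ is an a.s.\ finite, adapted, continuous process of finite variation: from $|\rho(u)|+|\eta(u)|\le C(1+\|u\|_\V^\beta)(1+\|u\|_\H^\zeta)$ together with $X_i\in\mathrm{C}([0,T];\H)\cap L^\beta(0,T;\V)$ one gets $\int_0^T\big(|\rho(X_1(s))|+|\eta(X_2(s))|\big)\,\d s<\infty$ $\P$-a.s. Unlike the asymmetric local monotonicity used in earlier works, no condition forcing $\rho\equiv0$ or $\eta\equiv0$ is needed, because each solution separately possesses the full $\mathrm{C}([0,T];\H)\cap L^\beta(0,T;\V)$ regularity. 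I would then localize by
\[ \tau_n:=T\wedge\inf\{t\ge0:\|X_1(t)\|_\H\vee\|X_2(t)\|_\H\ge n\}\wedge\inf\Big\{t\ge0:\textstyle\int_0^t\big(\|X_1(s)\|_\V^\beta+\|X_2(s)\|_\V^\beta\big)\,\d s\ge n\Big\}, \]
so that $\tau_n\uparrow T$ $\P$-a.s., and on $[0,\tau_n]$ both $\sup_s|\xi(s)|$ and $\langle M\rangle_{\tau_n}$ are bounded by deterministic constants, whence $M^{\tau_n}$ is a genuine square-integrable martingale.

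Next I would apply the It\^{o} product rule to $e^{-\xi(t)}\|Z(t)\|_\H^2$; since $\xi$ has finite variation there is no cross variation and the weight cancels the drift exactly, leaving $e^{-\xi(t)}\|Z(t)\|_\H^2\le\int_0^t e^{-\xi(s)}\,\d M(s)$ (using $Z(0)=0$). Stopping at $\tau_n$ and taking expectations — the integral $\int_0^{\cdot\wedge\tau_n}e^{-\xi(s)}\,\d M(s)$ is a true martingale since $e^{-\xi}$ is bounded on $[0,\tau_n]$ and $M^{\tau_n}$ is square-integrable — gives $\E\big[e^{-\xi(t\wedge\tau_n)}\|Z(t\wedge\tau_n)\|_\H^2\big]\le0$. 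As the integrand is non-negative and $e^{-\xi}>0$, this forces $Z(t\wedge\tau_n)=0$ $\P$-a.s.\ for every fixed $t$; letting $n\to\infty$ and using $Z\in\mathrm C([0,T];\H)$ $\P$-a.s.\ gives $\P\{X_1(t)=X_2(t)\text{ for all }t\in[0,T]\}=1$, i.e.\ pathwise uniqueness.

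The main obstacle is exactly the unboundedness of $\rho$ and $\eta$ on $\V$, which prevents first taking expectations and running a deterministic Gronwall argument. The remedy is to keep the estimate pathwise through the exponential weight $e^{-\xi}$ and to choose the localizing sequence $\{\tau_n\}$ carefully, exploiting the a priori bounds $\sup_{[0,T]}\|X_i\|_\H<\infty$ and $\int_0^T\|X_i\|_\V^\beta\,\d t<\infty$ a.s., so that $\xi$ stays finite and the stochastic-integral term remains a genuine martingale after localization; the remaining computations are routine.
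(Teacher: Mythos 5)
Your proposal is correct and follows essentially the same route as the paper: apply It\^{o}'s formula to the squared $\H$-norm of the difference weighted by the exponential factor $\exp(-\int_0^t[2f_A+f_\Phi+2\rho(X_1)+2\eta(X_2)]\,\d s)$, use (A.2), (B.1), (C.2) to cancel the drift, localize so the stochastic integral is a true martingale, and take expectations, with the a.s.\ finiteness of $\int_0^T(\rho(X_1)+\eta(X_2))\,\d s$ guaranteed by the growth bound on $\rho,\eta$ and the path regularity of the solutions. The only differences are cosmetic (you fix equal initial data and spell out the localizing stopping times explicitly, while the paper allows $x_1\neq x_2$ and concludes via Fatou's lemma).
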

\begin{proof}
	Let us assume $Y_1(\cdot)$ and $Y_2(\cdot)$ be the two solutions to the system  (\ref{2312-22eq01.1})  defined on the same probability space $(\Omega,\mathcal{F},\{\mathcal{F}_t\}_{t\geq0},\P)$, with the initial data $Y_1(0)=x_1$ and $Y_2(0)=x_2,$ respectively. Let us define
\begin{eqnarray}\label{2312-25eq3.84}
\varphi(t):= \exp\bigg(-\int_0^t\Big[2\big(f(s)+\rho(Y_1(s))+\eta(Y_2(s))\big) + f_\Phi(t)\Big]\d s\bigg).
\end{eqnarray}
Applying It\^o's formula to the process $\varphi(\cdot)\|Y_1(\cdot)-Y_2(\cdot)\|_\H^2$, we find
\begin{eqnarray}\label{2312-25eq3.85}\nonumber
&&	\varphi(t)\|Y_1(t)-Y_2(t)\|_\H^2 \nonumber\\
&=& \|x_1-x_2\|_\H^2+\int_0^t\varphi(s)\Big\{2\langle A(s,Y_1(s))-A(s,Y_2(s)),Y_1(s)-Y_2(s)\rangle \nonumber\\
&&  \qquad + 2( \Phi(s,Y_1(s))-\Phi(s,Y_2(s)),Y_1(s)-Y_2(s)) +\|B(s,Y_1(s))-B(s,Y_2(s))\|_{L_2}^2 \nonumber \\
&& \qquad-\big[f(s)+\rho(Y_1(s))+\eta(Y_2(s))\big]\|Y_1(s)-Y_2(s)\|_\H^2\Big\}\d s  \nonumber\\
&& \quad+2\int_0^t\varphi(s)\big((B(s,Y_1(s))-B(s,Y_2(s)))\d W(s),Y_1(s)-Y_2(s)\big)  \nonumber\\
&\leq &  \|x_1-x_2\|_\H^2+2\int_0^t\varphi(s)\big((B(s,Y_1(s))-B(s,Y_2(s)))\d W(s),Y_1(s)-Y_2(s)\big),
\end{eqnarray}where we have used Hypothesis \ref{2312-22hypo1} (A.2), Hypothesis \ref{2312-22hypo2} (B.1), and Hypothesis \ref{2312-22hypo3} (C.2). Let   $\{\sigma_k\}\uparrow\infty$ be a sequence of stopping times in such a way that the local martingale  appearing in the above inequality \eqref{2312-25eq3.85} is a martingale. Taking expectations on both side of inequality \eqref{2312-25eq3.85}, we get
\begin{align}\label{2312-25eq3.86}
	\E\big[\varphi(t\wedge \sigma_k)\|Y_1(t\wedge \sigma_k)-Y_2(t\wedge \sigma_k)\|_\H^2\big] \leq \|x_1-x_2\|_\H^2.
\end{align}Passing $k\to\infty$ and using Fatou's lemma, we find
\begin{align}\label{2312-25eq3.87}
	\E\big[\varphi(t)\|Y_1(t)-Y_2(t)\|_\H^2\big] \leq \|x_1-x_2\|_\H^2,
\end{align}where we have used the fact that
\begin{align}\label{2312-25eq3.88}
	\int_0^T\big[f(s)+\rho(Y_1(s))+\eta(Y_2(s))\big]\d s<\infty,\;\P\text{-a.s.}
\end{align}The inequality \eqref{2312-25eq3.87} gives the pathwise uniqueness of solutions to the system  (\ref{2312-22eq01.1}).
\end{proof}

\begin{proof}[Proof of Theorem \ref{2312-22TH2.11}]
Applying the classical Yamada-Watanabe theorem,    Theorem \ref{2312-22TH3.11} and Theorem \ref{2312-22TH3.12} yields our desired Theorem \ref{2312-22TH2.11}.
\end{proof}

\section{Existence of optimal controls}
\setcounter{equation}{0}

The following lemma proves that given a minimizing sequence for the control problem $(\mathcal{P})$, we can obtain a subsequence and a mapping $\Phi\in \mathcal{U}$, such that the subsequence converges weakly to $\Phi$.
\begin{lemma}\label{2212-22lem4.1}
Let $\Phi_n$ be a minimising sequence for  control problem $(\mathcal{P})$. There exists a subsequence $n_k$ of $n$ and a mapping $\Phi\in \mathcal{U}$ such that for all $t\in [0,T], \ x,y\in \H$, we have
\begin{equation}\label{2312-25eq4.1}
  \lim_{k\to\infty}  (\Phi_{n_k}(t,x),  y)  = (\Phi(t,x),  y).
\end{equation}
\end{lemma}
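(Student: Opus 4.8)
The plan is to realize $\Phi$ as a pointwise weak limit of the minimizing sequence, obtained by a diagonal extraction over a countable dense set combined with the equi-Lipschitz structure forced by (C.1). First I would record that every element of $\mathcal{U}$ obeys one and the same affine bound: by (C.1), for any $\Psi\in\mathcal{U}$ and any $(t,x)\in[0,T]\times\H$,
\[
\|\Psi(t,x)\|_\H \le \|\Psi(0,0)\|_\H + \big(\lambda t^2+\alpha\|x\|_\H^2\big)^{1/2} \le \sqrt{\eta}+\sqrt{\lambda}\,T+\sqrt{\alpha}\,\|x\|_\H .
\]
Hence, for each fixed $(t,x)$, the sequence $\{\Phi_n(t,x)\}_n$ is bounded in $\H$. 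Fix a countable set $D=\{(t_j,x_j)\}_{j\ge1}$ dense in $[0,T]\times\H$ (possible since $\H$ is separable). Since bounded sequences in the Hilbert space $\H$ are weakly sequentially compact, a standard diagonal procedure produces a subsequence $\{n_k\}$ along which $\Phi_{n_k}(t_j,x_j)$ converges weakly in $\H$ for every $j$; denote the limit by $\Phi(t_j,x_j)$.

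Next I would extend $\Phi$ to all of $[0,T]\times\H$ and check it is the weak limit everywhere. The weak lower semicontinuity of $\|\cdot\|_\H$ applied to the weakly convergent differences gives, for $i,j\ge1$,
\[
\|\Phi(t_i,x_i)-\Phi(t_j,x_j)\|_\H \le \liminf_{k\to\infty}\|\Phi_{n_k}(t_i,x_i)-\Phi_{n_k}(t_j,x_j)\|_\H \le \big(\lambda|t_i-t_j|^2+\alpha\|x_i-x_j\|_\H^2\big)^{1/2},
\]
so $(t_j,x_j)\mapsto\Phi(t_j,x_j)$ is Lipschitz (with the same constants) on $D$, hence uniformly continuous, and extends uniquely to a map $\Phi$ on $[0,T]\times\H$ retaining those Lipschitz constants. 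For an arbitrary $(t,x)$, choose $(t_j,x_j)\in D$ with $(t_j,x_j)\to(t,x)$; then for any fixed $y\in\H$,
\begin{align*}
\big|(\Phi_{n_k}(t,x),y)-(\Phi(t,x),y)\big|
&\le \big(\lambda|t-t_j|^2+\alpha\|x-x_j\|_\H^2\big)^{1/2}\|y\|_\H \\
&\quad + \big|(\Phi_{n_k}(t_j,x_j)-\Phi(t_j,x_j),y)\big| + \big|(\Phi(t_j,x_j)-\Phi(t,x),y)\big|,
\end{align*}
where the first term uses the equi-Lipschitz bound on $\Phi_{n_k}$ and the third the Lipschitz continuity of the limit $\Phi$; choosing $j$ large and then $k$ large makes the right-hand side arbitrarily small. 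This is precisely \eqref{2312-25eq4.1}, and it also shows the limit is independent of the approximating sequence drawn from $D$.

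It then remains to check $\Phi\in\mathcal{U}$, i.e.\ that $\Phi$ still satisfies (C.1)--(C.4). Each is an inequality that survives a weak limit: for (C.1) and the growth bound (C.4) one uses weak lower semicontinuity of $\|\cdot\|_\H$ together with the uniform validity of the estimates along $\{\Phi_{n_k}\}$; for the monotonicity (C.2) and coercivity (C.3) conditions the test vectors $u-v$ and $u$ are fixed, so $2(\Phi_{n_k}(t,u)-\Phi_{n_k}(t,v),u-v)\to 2(\Phi(t,u)-\Phi(t,v),u-v)$ and $2(\Phi_{n_k}(t,u),u)\to 2(\Phi(t,u),u)$, preserving the bounds by $f_\Phi(t)\|u-v\|_\H^2$ and $f_\Phi(t)(1+\|u\|_\H^2)$; progressive measurability of $\Phi$ follows from that of the $\Phi_{n_k}$ together with the established pointwise convergence. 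The main obstacle is precisely the infinite-dimensionality of $\H$, which rules out a direct Arzel\`a--Ascoli argument: the real content lies in combining the diagonal weak-compactness extraction on the countable dense set $D$ with the equi-Lipschitz bound in order to (i) propagate the weak convergence to every point of $[0,T]\times\H$ and (ii) confirm that the limit map is genuinely admissible.
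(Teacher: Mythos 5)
Your proof is correct and follows essentially the same route as the argument the paper defers to (Lemma~4.1 of Lisei (2002)): a diagonal weak-compactness extraction over a countable dense subset of $[0,T]\times\H$, propagation to all points via the uniform Lipschitz bound from (C.1), and verification that the pointwise weak limit again satisfies (C.1)--(C.4). The only point worth making explicit is that the constants $\eta,\lambda,\alpha$, $C$ and the function $f_\Phi$ in Hypothesis~\ref{2312-22hypo3} are the same for all elements of $\mathcal{U}$, which is the intended reading and exactly what your equi-Lipschitz and limit-passage steps require.
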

\begin{proof}
The proof is omitted here. We refer the readers to Lemma 4.1 in Lisei (2002).
\end{proof}

For the sake of simplicity, the subsequence of $\{\Phi_{n_k}\}_{k=1}^\infty$ obtained in the previous lemma will be relabelled as the same. For this sequence and $\Phi$ as in the last lemma, let us consider the following controlled stochastic system
\begin{eqnarray}\label{2312-25eq4.2}
  (\hat{X}_{\Phi_n}(t), v) &=& (X_0,v) + \int_0^t \langle A(s, \hat{X}_{\Phi_n}(s)), v\rangle ds  + \int_0^t (\Phi_n(s, X_{\Phi}(s)), v\rangle ds  \nonumber\\
  && + \int_0^t (B(s, X_{\Phi}(s)), v)\d W(s), \  \text{$\P$-a.s.}
\end{eqnarray}
for all $v\in \V$, $t\in [0,T]$ and for $n\in \N$.

\begin{theorem}\label{2312-22th4.1}
Assume that the embedding $\V\subset\H$ is compact and Hypothesis \ref{2312-22hypo1}  $A1$, $B1$, and $C1$ hold. Then, for any initial data $x\in \H$, there exists a unique probabilistic strong solution to the system \eqref{2312-25eq4.2}.
Furthermore, for any $p\geq 2$, the following estimate holds:
\begin{eqnarray}\label{2312-25eq4.3}
&& \E\Bigg[  \sup_{t\in[0,T]} \|\hat{X}_{\Phi_n}(t)\|_\H^p + \bigg( \int_0^T  \|\hat{X}_{\Phi_n}(t)\|_\V^\beta  \d t\bigg)^{\frac p2} \Bigg]   \nonumber\\
&\leq& C \bigg(  \| x\|_\H^p + \E\Big[\sup_{t\in[0,T]} \|X_{\Phi}(t)\|_\H^{p} \Big]\bigg).
\end{eqnarray}
\end{theorem}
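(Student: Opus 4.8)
The structural point I would exploit first is that in \eqref{2312-25eq4.2} neither the drift term $\Phi_n(\cdot,X_\Phi(\cdot))$ nor the diffusion coefficient $B(\cdot,X_\Phi(\cdot))$ depends on the unknown $\hat X_{\Phi_n}$: here $X_\Phi$ is the solution associated with the fixed control $\Phi$, which by Theorem \ref{2312-22TH2.11} is a probabilistically strong solution satisfying \eqref{2312-25eq2.13}, so both $\Phi_n(\cdot,X_\Phi(\cdot))$ and $B(\cdot,X_\Phi(\cdot))$ are \emph{prescribed} progressively measurable processes. Setting $F_n(t):=\Phi_n(t,X_\Phi(t))$ and $G(t):=B(t,X_\Phi(t))$, the growth bounds (C.4) and (B.3) combined with \eqref{2312-25eq2.13} give, for every $p\ge 2$,
$$\E\int_0^T\|F_n(t)\|_\H^p\,\d t\le C\Big(1+\E\big[\sup_{t\in[0,T]}\|X_\Phi(t)\|_\H^p\big]\Big)<\infty,\qquad \E\Big(\int_0^T\|G(t)\|_{L_2}^2\,\d t\Big)^{p/2}\le C\Big(1+\E\big[\sup_{t\in[0,T]}\|X_\Phi(t)\|_\H^p\big]\Big)<\infty .$$
Hence \eqref{2312-25eq4.2} is a stochastic evolution equation whose principal part $A$ is fully locally monotone, with a given $\H$-valued adapted drift $F_n$ and \emph{additive} noise $G\,\d W$.

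For existence I would run the Faedo--Galerkin / tightness / Skorokhod scheme of Section 3 verbatim, with $F_n$ in place of $\Phi(\cdot,X(\cdot))$ and $G$ in place of $B(\cdot,X(\cdot))$: project onto $\H_m$ to obtain finite-dimensional SDEs $Y_m^n$; derive the uniform bound \eqref{2312-25eq3.18} as in Lemma \ref{2312-25lem1}, using the coercivity (A.3) and the integrability of $F_n,G$ just recorded; prove tightness of $\{\mathcal{L}(Y_m^n)\}$ on $L^\beta(0,T;\H)\cap C([0,T];\V^*)$ as in Lemmas \ref{2312-22lem2} and \ref{2312-22lem3.5}; pass to a limit on a new probability space via Skorokhod's representation theorem; and identify the limit. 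The simplification over Theorem \ref{2312-22TH3.11} is that the approximating drift $\PP_mF_n$ and noise coefficient $\PP_mG\,\Q_m$ converge \emph{strongly} in $L^2$ (by $\|I-\PP_m\|\to0$, $\|I-\Q_m\|\to0$ and dominated convergence), so the only weak limit that still has to be identified is that of $\PP_mA(\cdot,Y_m^n(\cdot))$ with $A(\cdot,\hat X_{\Phi_n}(\cdot))$, which follows from the pseudo-monotonicity of $A(t,\cdot)$ and the energy-equality comparison exactly as in Lemma \ref{2312-22lem3.12}.

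Pathwise uniqueness I would obtain as in Theorem \ref{2312-22TH3.12}, only more easily: for two solutions $\hat X^1,\hat X^2$ with the same data the difference $D:=\hat X^1-\hat X^2$ satisfies $\d D(t)=[A(t,\hat X^1(t))-A(t,\hat X^2(t))]\,\d t$ with \emph{no} stochastic term, so applying It\^o's formula to $\varphi(t)\|D(t)\|_\H^2$ with $\varphi(t)=\exp\big(-\int_0^t[f_A(s)+\rho(\hat X^1(s))+\eta(\hat X^2(s))]\,\d s\big)$ and using the local monotonicity (A.2) gives $D\equiv 0$. Weak existence together with pathwise uniqueness then yields, via Yamada--Watanabe, a unique probabilistically strong solution (in fact the limit process is already adapted to the original filtration, $G\,\d W$ being prescribed in advance). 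For the energy estimate \eqref{2312-25eq4.3} I would apply It\^o to $\|\hat X_{\Phi_n}(t)\|_\H^p$ as in the proof of Lemma \ref{2312-25lem1}: coercivity (A.3) places $\frac{pC}{2}\int_0^t\|\hat X_{\Phi_n}(s)\|_\V^\beta\|\hat X_{\Phi_n}(s)\|_\H^{p-2}\,\d s$ on the left; the $F_n$-contribution is dominated by $\epsilon\sup_{s\le t}\|\hat X_{\Phi_n}(s)\|_\H^p+C_\epsilon\int_0^t(\|\hat X_{\Phi_n}(s)\|_\H^p+\|F_n(s)\|_\H^p)\,\d s$ by H\"older's and Young's inequalities; the stochastic term is handled by Burkholder--Davis--Gundy and Young as in \eqref{2312-25eq3.23}, leaving an absorbable $\sup$-term together with $C_\epsilon\int_0^T\|G(s)\|_{L_2}^p\,\d s$. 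Choosing $\epsilon$ small, localising by stopping times, taking expectations, applying Gronwall's inequality, and then bootstrapping with a second application of It\^o to $\|\hat X_{\Phi_n}\|_\H^2$ (as in \eqref{2312-25eq3.26}--\eqref{2312-25eq3.28}) gives a bound by $C\big(\|x\|_\H^p+\E\int_0^T\|F_n(s)\|_\H^p\,\d s+\E\big(\int_0^T\|G(s)\|_{L_2}^2\,\d s\big)^{p/2}\big)$; the two latter terms are controlled by $C(1+\E[\sup_t\|X_\Phi(t)\|_\H^p])$ as in the first paragraph, which is \eqref{2312-25eq4.3}.

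The step I expect to require the most care is, exactly as in Section 3, the identification of the weak limit of $\PP_mA(\cdot,Y_m^n(\cdot))$ with $A(\cdot,\hat X_{\Phi_n}(\cdot))$: this uses the energy equality for both the Galerkin approximations and the limit, lower semicontinuity of $\|\cdot\|_\H$ along the $C([0,T];\V^*)$-convergence, and pseudo-monotonicity of $A(t,\cdot)$ from Lemma \ref{2312-22lem3.12}. Everything else is a routine repetition of the estimates of Lemma \ref{2312-25lem1} and the uniqueness computation of Theorem \ref{2312-22TH3.12}, the repetition being lighter here precisely because both the noise and the $\Phi_n$-term are additive and prescribed rather than state-dependent.
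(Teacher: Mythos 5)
Your proposal is correct and follows essentially the same route as the paper: the paper proves the uniform Galerkin estimate \eqref{2312-25eq4.5} for the auxiliary system via It\^o's formula, (A.3), (B.3), (C.4), BDG and Gronwall, and then simply invokes the Section 3 machinery (tightness, Skorokhod representation, pseudo-monotone identification of the limit of $A$, pathwise uniqueness, Yamada--Watanabe) for the prescribed drift $\Phi_n(\cdot,X_\Phi(\cdot))$ and noise $B(\cdot,X_\Phi(\cdot))$, exactly as you outline. Your extra observations --- that the prescribed data make the drift/noise limits strong and the uniqueness computation free of the stochastic term --- are simplifications the paper leaves implicit but are fully consistent with its argument.
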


Now, we consider  the   stochastic system  \eqref{2312-25eq4.2} in the finite-dimensional  space $\H_m$, for any $m\geq 1$
\begin{eqnarray}\label{2312-25eq4.4}
 (\hat{X}_{m, \Phi_n}(t), v) &=& (\PP_m x ,v) + \int_0^t \langle  \PP_m A(s, \hat{X}_{m, \Phi_n}(s)), v\rangle ds  + \int_0^t (\PP_m \Phi_n(s, X_{\Phi}(s)), v\rangle ds  \nonumber\\
  && + \int_0^t (\PP_m B(s, X_{\Phi}(s)), v) \Q_m \d W(s), \  \text{$\P$-a.s.}
\end{eqnarray}

In this  stochastic system  \eqref{2312-25eq4.4} above, the control term and fluctuation term only dependent on the system $X_{\Phi}$ correspond to the optimal control $\Phi$, not dependent on $\hat{X}_{m, \Phi_n}$.
For this  finite-dimensional stochastic system  \eqref{2312-25eq4.4}, we have  the following uniform energy estimate for $\hat{X}_{m, \Phi_n}$.

\begin{lemma}
  For any $p\geq 2$, there exists a constant $C>0$ such that uniform energy estimate holds:
\begin{eqnarray}\label{2312-25eq4.5}
&&\sup_{m\in \N} \E\Bigg[  \sup_{t\in[0,T]} \|\hat{X}_{m, \Phi_n}(t)\|_\H^p + \bigg( \int_0^T  \|\hat{X}_{m,\Phi_n}(t)\|_\V^\beta  \d t\bigg)^{\frac p2} \Bigg]   \nonumber\\
&\leq& C \bigg(  \| x\|_\H^p + \E\Big[\sup_{t\in[0,T]} \|X_{\Phi}(t)\|_\H^{p} \Big]\bigg).
\end{eqnarray}
\end{lemma}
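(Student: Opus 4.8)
The plan is to repeat, essentially line for line, the proof of Lemma~\ref{2312-25lem1}, exploiting the structural feature of \eqref{2312-25eq4.4} that the control term $\PP_m\Phi_n(s,X_{\Phi}(s))$ and the diffusion coefficient $\PP_mB(s,X_{\Phi}(s))\Q_m$ are evaluated at the \emph{fixed} process $X_{\Phi}$ and not at the unknown $\hat{X}_{m,\Phi_n}$, so that they enter \eqref{2312-25eq4.4} purely as external data. Because of this, \eqref{2312-25eq4.4} is really a random ODE in $\H_m$ perturbed by a given continuous adapted process, and its solvability is exactly that of \eqref{2312-25eq3.17} and need not be reproved. The first step is to apply It\^o's formula to $\|\hat{X}_{m,\Phi_n}(\cdot)\|_{\H}^{p}$ and use the coercivity bound (A.3) on the $A$-term; this puts on the left-hand side the good term $\tfrac{pC}{2}\int_0^t\|\hat{X}_{m,\Phi_n}(s)\|_{\V}^{\beta}\|\hat{X}_{m,\Phi_n}(s)\|_{\H}^{p-2}\,\d s$ and leaves, exactly as in \eqref{2312-25eq3.020}, a remainder controlled by $C_p\int_0^t f_A(s)\big(1+\|\hat{X}_{m,\Phi_n}(s)\|_{\H}^{2}\big)\|\hat{X}_{m,\Phi_n}(s)\|_{\H}^{p-2}\,\d s$.

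The only genuinely new point is the treatment of the two data terms. For the control contribution I would combine the Cauchy--Schwarz inequality, the contractivity of $\PP_m$ on $\H$, Young's inequality and the growth bound (C.4) to dominate $p\|\hat{X}_{m,\Phi_n}(s)\|_{\H}^{p-2}\big|(\PP_m\Phi_n(s,X_{\Phi}(s)),\hat{X}_{m,\Phi_n}(s))\big|$ by $\epsilon\|\hat{X}_{m,\Phi_n}(s)\|_{\H}^{p}+C_\epsilon\big(f_\Phi(s)+\|X_{\Phi}(s)\|_{\H}^{p}\big)$; similarly, using $\|\PP_m(\cdot)\Q_m\|_{L_2}\leq\|\cdot\|_{L_2}$, Young's inequality and (B.3), the two It\^o correction terms generated by $\PP_mB(s,X_{\Phi}(s))\Q_m$ are bounded by $\epsilon\|\hat{X}_{m,\Phi_n}(s)\|_{\H}^{p}+C_\epsilon g_B(s)\big(1+\|X_{\Phi}(s)\|_{\H}^{p}\big)$. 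Writing $F(s):=f_A(s)+f_\Phi(s)+g_B(s)$ and collecting terms as in \eqref{2312-25eq3.020}--\eqref{2312-25eq3.2020}, one is led to an inequality of the form
\begin{align*}
 & \|\hat{X}_{m,\Phi_n}(t)\|_{\H}^{p}+\tfrac{pC}{2}\int_0^t\|\hat{X}_{m,\Phi_n}(s)\|_{\V}^{\beta}\|\hat{X}_{m,\Phi_n}(s)\|_{\H}^{p-2}\,\d s \\
 \leq{}& \|x\|_{\H}^{p}+C\int_0^{T}F(s)\,\d s+C\int_0^{T}\|X_{\Phi}(s)\|_{\H}^{p}\,\d s+C\int_0^{t}F(s)\|\hat{X}_{m,\Phi_n}(s)\|_{\H}^{p}\,\d s+\mathcal{M}(t),
\end{align*}
where $\mathcal{M}$ is the local martingale coming from the stochastic integral.

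From here the localisation/BDG/Gronwall scheme of Lemma~\ref{2312-25lem1} applies without change: introduce $\tau_N:=T\wedge\inf\{t:\|\hat{X}_{m,\Phi_n}(t)\|_{\H}>N\}$, take $\sup_{t\leq r\wedge\tau_N}$ and then expectations, estimate $\mathcal{M}$ via the Burkholder--Davis--Gundy inequality followed by the same Young splitting as in \eqref{2312-25eq3.23} (with each $B(s,Y_m(s))$ replaced by $B(s,X_{\Phi}(s))$, which only enlarges the forcing), absorb the $\epsilon$-terms into the left-hand side, and apply Gronwall's inequality using $\int_0^T F(s)\,\d s<\infty$; then let $N\to\infty$ and invoke Fatou's lemma. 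Since $\int_0^T\|X_{\Phi}(s)\|_{\H}^{p}\,\d s\leq T\sup_{t\in[0,T]}\|X_{\Phi}(t)\|_{\H}^{p}$ and $f_A,f_\Phi,g_B\in L^1(0,T;\R_+)$, this yields the bound on $\E[\sup_{t\in[0,T]}\|\hat{X}_{m,\Phi_n}(t)\|_{\H}^{p}]$ in \eqref{2312-25eq4.5}, uniformly in $m$ (and in $n$); applying It\^o's formula once more to $\|\hat{X}_{m,\Phi_n}(t)\|_{\H}^{2}$ and arguing as in \eqref{2312-25eq3.26}--\eqref{2312-25eq3.28} then gives the same bound for $\E\big[\big(\int_0^T\|\hat{X}_{m,\Phi_n}(t)\|_{\V}^{\beta}\,\d t\big)^{p/2}\big]$, which completes the proof. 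I do not expect a real obstacle here; the only thing that needs care is the bookkeeping of the two processes, so that the coercivity machinery is applied solely to the $A$-term evaluated at $\hat{X}_{m,\Phi_n}$ while $\Phi_n(\cdot,X_{\Phi})$ and $B(\cdot,X_{\Phi})$ are treated as data whose $L^{p}$-in-time norms are controlled, by the a priori estimate of Theorem~\ref{2312-22TH2.11}, by the finite quantity $\E[\sup_{t\in[0,T]}\|X_{\Phi}(t)\|_{\H}^{p}]$. In fact this decoupling makes the present estimate strictly easier than Lemma~\ref{2312-25lem1}: there is no feedback through the diffusion coefficient, so the growth of $B$ is only ever used in the data $X_{\Phi}$, and no monotonicity argument is needed at this stage.
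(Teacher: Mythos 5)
Your proposal is correct and follows essentially the same route as the paper: It\^o's formula for $\|\hat{X}_{m,\Phi_n}(\cdot)\|_{\H}^{p}$ with (A.3) on the $A$-term, Young's inequality plus (C.4) and (B.3) to treat $\Phi_n(\cdot,X_{\Phi})$ and $B(\cdot,X_{\Phi})$ as external data bounded through $\|X_{\Phi}\|_{\H}^{p}$, then localisation, BDG, Gronwall and Fatou, and finally a second application of It\^o's formula to $\|\hat{X}_{m,\Phi_n}(\cdot)\|_{\H}^{2}$ to extract the $\big(\int_0^T\|\cdot\|_{\V}^{\beta}\d t\big)^{p/2}$ bound. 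The observation that the decoupling of the control and diffusion terms from $\hat{X}_{m,\Phi_n}$ makes the estimate a routine variant of Lemma \ref{2312-25lem1} is exactly how the paper proceeds.
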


\begin{proof}
Applying It\^{o}'s formula to the process $\| \hat{X}_{m, \Phi_n}(t) \|_\H^p$, and then using Young's inequality, we find
\begin{eqnarray}\label{2312-25eq4.11}
&& \| \hat{X}_{m, \Phi_n}(t) \|_\H^p   \nonumber \\
 &=& \|\PP_m x\|_\H^p + p\int_0^t \| \hat{X}_{m, \Phi_n}(s) \|_\H^{p-2} \langle A(s, \hat{X}_{m, \Phi_n}(s)), \hat{X}_{m, \Phi_n}(s) \rangle  \d s  \nonumber \\
 && + \frac{p}{2} \int_0^t \| \hat{X}_{m, \Phi_n}(s) \|_\H^{p-2} \big[ 2(\Phi_n(s, X_{ \Phi}(s)), \hat{X}_{m, \Phi_n}(s))     \big]  \d s  \nonumber \\
 && + \frac{p}{2} \int_0^t \| \hat{X}_{m, \Phi_n}(s) \|_\H^{p-2}     \|\PP_m B(s, X_{\Phi}(s)) Q_m\|_{L_2}^2     \d s  \nonumber \\
 && + \frac{p(p-2)}{2} \int_0^t \| \hat{X}_{m, \Phi_n}(s) \|_\H^{p-4}  \|\hat{X}_{m, \Phi_n}(s)   \circ \PP_m B(s, X_{\Phi}(s)) Q_m \|_\U^2     \d s  \nonumber \\
 && +  p \int_0^t \| \hat{X}_{m, \Phi_n}(s) \|_\H^{p-2} \Big(  B(s, X_{\Phi}(s)) Q_m \d W(s),   \hat{X}_{m, \Phi_n}(s)\Big)  \nonumber \\
 &\leq& \|\PP_m x\|_\H^p +  p\int_0^t \| \hat{X}_{m, \Phi_n}(s) \|_\H^{p-2} \langle A(s, \hat{X}_{m, \Phi_n}(s)), \hat{X}_{m, \Phi_n}(s) \rangle  \d s  \nonumber \\
 &&  +   \frac{p}{2} \int_0^t \Big[C_{\epsilon,p} \| \hat{X}_{m, \Phi_n}(s) \|_\H^{p}  + C_{\epsilon,p} \|\Phi_n(s, X_{ \Phi}(s))\|_\H^{p}  \Big]   \d s  \nonumber \\
 && +  \frac{p(p-1)}{2} \int_0^t \Big[C_{\epsilon,p} \| \hat{X}_{m, \Phi_n}(s) \|_\H^{p}  + C_{\epsilon,p}   \|\PP_m B(s, X_{\Phi}(s)) Q_m\|_{L_2}^{p}  \Big]     \d s  \nonumber \\
 && +  p \int_0^t \| \hat{X}_{m, \Phi_n}(s) \|_\H^{p-2} \Big(  B(s, X_{\Phi}(s)) Q_m \d W(s),   \hat{X}_{m, \Phi_n}(s)\Big).
\end{eqnarray}
Using Hypothesis \ref{2312-22hypo1} (A.3), Hypothesis \ref{2312-22hypo2} (B2) and Hypothesis \ref{2312-22hypo3} (C3), we obtain
\begin{eqnarray}\label{2312-25eq4.12}
 && \| \hat{X}_{m, \Phi_n}(t) \|_\H^p  + \frac{pC}{2} \int_0^t \| \hat{X}_{m, \Phi_n}(s) \|_\H^{p-2}  \| \hat{X}_{m, \Phi_n}(s) \|_\V^{\beta} \d s  \nonumber \\
 &\leq& \|\PP_m x\|_\H^p + \frac{p}{2} \int_0^t f_A(s) (1+\|\hat{X}_{m, \Phi_n}(s)\|_\H^2) \| \hat{X}_{m, \Phi_n}(s) \|_\H^{p-2}   \d s  \nonumber \\
 &&  +  \frac{p}{2} \int_0^t \Big[C_{\epsilon,p} \| \hat{X}_{m, \Phi_n}(s) \|_\H^{p}  + C_{\epsilon,p}(f_\Phi(s)+\| X_{ \Phi}(s)\|_\H^{p})    \Big]   \d s  \nonumber \\
 && +  \frac{p(p-1)}{2} \int_0^t \Big[C_{\epsilon,p} \| \hat{X}_{m, \Phi_n}(s) \|_\H^{p}  + C_{\epsilon,p}  g_B(s)(1+\| X_{ \Phi}(s)\|_\H^{p})   \Big] \d s  \nonumber \\
 && +  p \int_0^t \| \hat{X}_{m, \Phi_n}(s) \|_\H^{p-2} \Big(  B(s, X_{\Phi}(s)) Q_m \d W(s),   \hat{X}_{m, \Phi_n}(s)\Big)  \nonumber \\
  &\leq& \|\PP_m x\|_\H^p + C_p \int_0^t \Big[ f_A(s)  +  f_A(s)\| \hat{X}_{m, \Phi_n}(s) \|_\H^{p} \Big]  \d s  \nonumber \\
 &&  +  \frac{p}{2} \int_0^t \Big[C_{\epsilon,p} \| \hat{X}_{m, \Phi_n}(s) \|_\H^{p}  + C_{\epsilon,p}(f_\Phi(s)+\| X_{ \Phi}(s)\|_\H^{p})    \Big]   \d s  \nonumber \\
 && +  \frac{p(p-1)}{2} \int_0^t \Big[C_{\epsilon,p} \| \hat{X}_{m, \Phi_n}(s) \|_\H^{p}  + C_{\epsilon,p}  g_B(s)(1+\| X_{ \Phi}(s)\|_\H^{p})   \Big] \d s  \nonumber \\
 && +  p \int_0^t \| \hat{X}_{m, \Phi_n}(s) \|_\H^{p-2} \Big(  B(s, X_{\Phi}(s)) Q_m \d W(s),   \hat{X}_{m, \Phi_n}(s)\Big)  \nonumber \\
 &=& \|\PP_m x\|_\H^p +  C_{\epsilon,p} \int_0^t \Big[ F(s)+\| X_{ \Phi}(s)\|_\H^{p}+  g_B(s)\| X_{ \Phi}(s)\|_\H^{p})   \Big]  \d s  \nonumber \\
 && +  C_{\epsilon,p}  \int_0^t \Big[ \Big(   f_A(s) +1   \Big) \| \hat{X}_{m, \Phi_n}(s) \|_\H^{p} \Big]   \d s \nonumber \\
 && +  p \int_0^t \| \hat{X}_{m, \Phi_n}(s) \|_\H^{p-2} \Big(  B(s, X_{\Phi}(s)) Q_m \d W(s),   \hat{X}_{m, \Phi_n}(s)\Big),
\end{eqnarray}
where $F(s)=f_A(s)  +f_\Phi(s)+  g_B(s)$ as \eqref{2312-25eq3.3020}.
Define sequence of stopping times as follows:
\begin{align*}
	\tau_N^m:=T\wedge \inf\{t\geq 0:\|\hat{X}_{m, \Phi_n}(t)\|_\H>N\}.
\end{align*}
Then $\tau_N^m\to T,\;\P$-a.s., as $N\to \infty$ for every $m$. Next, taking the supremum over time from $0$ to $r\wedge\tau_N^m$ and then taking  expectations on both sides of the above inequality \eqref{2312-25eq4.12}, we deduce
\begin{eqnarray}\label{2312-25eq4.13}
 && \E\bigg[ \sup_{t\in[0,r\wedge\tau_N^m]} \| \hat{X}_{m, \Phi_n}(t) \|_\H^p \bigg] + \frac{pC}{2} \E\bigg[ \int_0^{r\wedge\tau_N^m} \| \hat{X}_{m, \Phi_n}(s) \|_\H^{p-2}  \| \hat{X}_{m, \Phi_n}(s) \|_\V^{\beta} \d s  \bigg] \nonumber \\
 &\leq&   \| x\|_\H^p + C_{\epsilon,p}  \E\bigg[ \int_0^{r\wedge\tau_N^m} \Big[F(s)+\| X_{ \Phi}(s)\|_\H^{p}+  g_B(s)\| X_{ \Phi}(s)\|_\H^{p})   \Big]  \d s  \bigg] \nonumber \\
 && +  C_{\epsilon,p}  \E\bigg[  \int_0^{r\wedge\tau_N^m} \Big[ \big(   f_A(s) +1   \big) \| \hat{X}_{m, \Phi_n}(s) \|_\H^{p} \Big]   \d s \bigg]\nonumber \\
 && +  p  \E\bigg[  \sup_{t\in[0,r\wedge\tau_N^m]} \int_0^t \| \hat{X}_{m, \Phi_n}(s) \|_\H^{p-2} \Big(  B(s, X_{\Phi}(s)) Q_m \d W(s),   \hat{X}_{m, \Phi_n}(s)\Big) \bigg] .
\end{eqnarray}
Now, we  consider the last term in \eqref{2312-25eq4.13} and estimate it using Hypothesis \ref{2312-22hypo1} (B3), Burkholder-Davis-Gundy inequality, Young's and Holder's inequalities as
\begin{eqnarray}\label{2312-25eq4.14}
 &&    p  \E\bigg[  \sup_{t\in[0,r\wedge\tau_N^m]} \int_0^t \| \hat{X}_{m, \Phi_n}(s) \|_\H^{p-2} \Big(  B(s, X_{\Phi}(s)) Q_m \d W(s),   \hat{X}_{m, \Phi_n}(s)\Big) \bigg]  \nonumber \\
 &\leq&   C_p \E\bigg[ \bigg( \int_0^{r\wedge\tau_N^m}  \| \hat{X}_{m, \Phi_n}(s) \|_\H^{2p-2}  \|B(s, X_{\Phi}(s))\|_{L_2}^2 \d s \bigg)^{\frac12} \bigg]  \nonumber \\
 &\leq&  C_p  \E\bigg[  \bigg( \sup_{s\in[0, r\wedge\tau_N^m]} \| \hat{X}_{m, \Phi_n}(s) \|_\H^{2p-2} \int_0^{r\wedge\tau_N^m}  \|B(s, X_{\Phi}(s))\|_{L_2}^2 \d s \bigg)^{\frac12} \bigg]  \nonumber \\
  &\leq& \epsilon \E\bigg[    \sup_{s\in[0, r\wedge\tau_N^m]} \| \hat{X}_{m, \Phi_n}(s) \|_\H^{p} \bigg] +   C_{\epsilon, p}  \E\bigg[   \bigg(  \int_0^{r\wedge\tau_N^m}  \|B(s, X_{\Phi}(s))\|_{L_2}^2 \d s \bigg)^{\frac{p}{2}}  \bigg]  \nonumber \\
  &\leq&   \epsilon \E\bigg[    \sup_{s\in[0, r\wedge\tau_N^m]} \| \hat{X}_{m, \Phi_n}(s) \|_\H^{p} \bigg] +   C_{\epsilon, p}  \E\bigg[   \int_0^{r\wedge\tau_N^m}  \|B(s, X_{\Phi}(s))\|_{L_2}^p  \d s   \bigg]  \nonumber \\
   &\leq&   \epsilon \E\bigg[    \sup_{s\in[0, r\wedge\tau_N^m]} \| \hat{X}_{m, \Phi_n}(s) \|_\H^{p} \bigg] +   C_{\epsilon, p}  \E\bigg[  \int_0^{r\wedge\tau_N^m}  [ g_B(s)  +  g_B(s) \|X_{\Phi}(s)\|_\H^{p}  ]\d s   \bigg],  \nonumber \\
\end{eqnarray}
where $\epsilon>0$.
Substituting \eqref{2312-25eq4.14} in \eqref{2312-25eq4.13}, and choosing an appropriate parameter $\epsilon$, we get
\begin{eqnarray}\label{2312-25eq4.15}
 && \E\bigg[ \sup_{t\in[0,r\wedge\tau_N^m]} \| \hat{X}_{m, \Phi_n}(t) \|_\H^p \bigg] + C \E\bigg[ \int_0^{r\wedge\tau_N^m} \| \hat{X}_{m, \Phi_n}(s) \|_\H^{p-2}  \| \hat{X}_{m, \Phi_n}(s) \|_\V^{\beta} \d s  \bigg] \nonumber \\
 &\leq&    \| x\|_\H^p + C_{\epsilon,p}  \E\bigg[ \int_0^{r\wedge\tau_N^m} \Big[ F(s)+\| X_{ \Phi}(s)\|_\H^{p}+  g_B(s)\| X_{ \Phi}(s)\|_\H^{p})   \Big]  \d s  \bigg] \nonumber \\
 && +  C_{\epsilon,p}  \E\bigg[  \int_0^{r\wedge\tau_N^m} \Big[ \big(   f_A(s) +1   \big) \| \hat{X}_{m, \Phi_n}(s) \|_\H^{p} \Big]   \d s \bigg]\nonumber \\
 &&   +   C_{\epsilon, p}  \E\bigg[  \int_0^{r\wedge\tau_N^m}  [ g_B(s)   + g_B(s)\|X_{\Phi}(s)\|_\H^{p}  ]\d s   \bigg]  \nonumber \\
 &\leq&   \| x\|_\H^p + C_{\epsilon, p}  \E\bigg[ \int_0^{r\wedge\tau_N^m} \Big[  F(s)   +  g_B(s) \|X_{\Phi}(s)\|_\H^{p}   \Big]  \d s  \bigg] \nonumber \\
 && +   C_{\epsilon,p}  \E\bigg[  \int_0^{r\wedge\tau_N^m} \Big[ \big( 1+  f_A(s)   \big) \| \hat{X}_{m, \Phi_n}(s) \|_\H^{p}\Big]   \d s \bigg].
\end{eqnarray}
Using Gronwall's inequality, passing $N\to \infty$ and applying Fatou's lemma, we get
\begin{eqnarray}\label{2312-25eq4.16}
 && \E\bigg[ \sup_{t\in[0,T]} \| \hat{X}_{m, \Phi_n}(t) \|_\H^p \bigg]   \nonumber \\
 &\leq&   C_{\epsilon,p}   \bigg(  \| x\|_\H^p + C_{\epsilon, p}  \E\bigg[ \int_0^{T} \Big[  F(s)   +  g_B(s) \|X_{\Phi}(s)\|_\H^{p}   \Big]  \d s  \bigg] \bigg)  \nonumber \\
 &&  ~~~~ \cdot \exp\bigg( C\int_0^T \big( 1+  f_A(s)   \big) \d s \bigg)   \nonumber \\
 &\leq&   C_{\epsilon,p}   \bigg(  \| x\|_\H^p + \E\Big[\sup_{t\in[0,T]} \|X_{\Phi}(t)\|_\H^{p}\Big] \bigg)   .
\end{eqnarray}

Applying It\^{o}'s formula to the process $\| \hat{X}_{m, \Phi_n}(t) \|_\H^2$, and applying Hypothesis \ref{2312-22hypo1},  we find
\begin{eqnarray}\label{2312-25eq4.17}
 && \| \hat{X}_{m, \Phi_n}(t) \|_\H^2  \nonumber \\
 &=& \|\PP_m x\|_\H^2 + 2\int_0^t   \langle A(s, \hat{X}_{m, \Phi_n}(s)), \hat{X}_{m, \Phi_n}(s) \rangle  \d s  \nonumber \\
 && +   \int_0^t   \big[ 2(\Phi_n(s, X_{ \Phi}(s)), \hat{X}_{m, \Phi_n}(s))     \big]  \d s
    +   \int_0^t   \|\PP_m B(s, X_{\Phi}(s)) Q_m\|_{L_2}^2     \d s  \nonumber \\
 && +  2 \int_0^t   \Big(  B(s, X_{\Phi}(s)) Q_m \d W(s),   \hat{X}_{m, \Phi_n}(s)\Big) \nonumber \\
 &\leq& \|\PP_m x\|_\H^2 +  \int_0^t \Big[ f_A(t)(1+\|\hat{X}_{m, \Phi_n}(s)\|_\H^2) - C\|\hat{X}_{m, \Phi_n}(s)\|_\V^\beta \Big]  \d s  \nonumber \\
 && +   \int_0^t   \big[ 2(\Phi_n(s, X_{ \Phi}(s)), \hat{X}_{m, \Phi_n}(s))     \big]  \d s
    +   \int_0^t   \|\PP_m B(s, X_{\Phi}(s)) Q_m\|_{L_2}^2     \d s  \nonumber \\
 && +  2 \int_0^t   \Big(  B(s, X_{\Phi}(s)) Q_m \d W(s),   \hat{X}_{m, \Phi_n}(s)\Big)   .
\end{eqnarray}
Hence, tanking the expectation on \eqref{2312-25eq4.17}, it follows that
\begin{eqnarray}\label{2312-25eq4.18}
 && \E\bigg[ \| \hat{X}_{m, \Phi_n}(t) \|_\H^2 + C\int_0^t  \|\hat{X}_{m, \Phi_n}(s)\|_\V^\beta   \d s \bigg]   \nonumber \\
 &\leq& \E\bigg[\|\PP_m x\|_\H^2 +  \int_0^t \Big[ f_A(t)(1+\|\hat{X}_{m, \Phi_n}(s)\|_\H^2) \Big] \d s  \nonumber \\
 && ~~~~~ +   \int_0^t   \big[ 2(\Phi_n(s, X_{ \Phi}(s)), \hat{X}_{m, \Phi_n}(s))     \big]  \d s \nonumber \\
 &&  ~~~~~  +   \int_0^t   \|\PP_m B(s, X_{\Phi}(s)) Q_m\|_{L_2}^2     \d s  \bigg]    .
\end{eqnarray}
Using the same argument above for \eqref{2312-25eq4.18}, we obtain
\begin{eqnarray}\label{2312-25eq4.19}
  \E\bigg[  \int_0^t  \|\hat{X}_{m, \Phi_n}(s)\|_\V^\beta   \d s \bigg]
 \leq   C    \bigg(  \| x\|_\H^p + \E\Big[\sup_{t\in[0,T]} \|X_{\Phi}(t)\|_\H^{p}\Big] \bigg)    .
\end{eqnarray}
Combining \eqref{2312-25eq4.16} and \eqref{2312-25eq4.19}, we get our desired \eqref{2312-25eq4.5}.
\end{proof}

Same arguments as in section above, we can obtain Theorem \ref{2312-22th4.1}.
Now, we establish some convergence properties of the solutions to the systems above.

\begin{theorem}\label{2312-22th4.2}
Consider $(\Phi_n)_{n\in\N} \subset \mathcal{U}$ and $\Phi\in\mathcal{U}$ such that
\begin{equation}\label{2312-25eq4.20.1}
  \lim_{n\to\infty} \|\Phi_n(t,x) -\Phi(t,x)\|_\H =0,
\end{equation}
for all $t\in[0,T], x\in\V$.
Then the solutions to  (\ref{2312-22eq01.1})  and \eqref{2312-25eq4.2} satisfy the following convergence
\begin{eqnarray}\label{2312-25eq4.20}
 && \lim_{n\to\infty} \E\bigg[ \sup_{t\in[0,T]}   \| \hat{X}_{\Phi_n}(t)  - X_{\Phi}(t)  \|_\V^2 \bigg] =0.
\end{eqnarray}
\end{theorem}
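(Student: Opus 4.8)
The plan is to study the difference process $Z_n:=\hat X_{\Phi_n}-X_\Phi$ and to exploit the decisive structural feature of the auxiliary system \eqref{2312-25eq4.2}: its diffusion coefficient is $B(s,X_\Phi(s))$, exactly the one driving \eqref{2312-22eq01.1}, so the stochastic integrals are identical and cancel upon subtraction. Consequently $Z_n$ solves, pathwise and with $Z_n(0)=0$, the random evolution equation
\[
Z_n(t)=\int_0^t\big[A(s,\hat X_{\Phi_n}(s))-A(s,X_\Phi(s))\big]\,\d s+\int_0^t\big[\Phi_n(s,X_\Phi(s))-\Phi(s,X_\Phi(s))\big]\,\d s
\]
in $\V^*$, carrying no martingale part. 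Both $\hat X_{\Phi_n}$ and $X_\Phi$ belong to $L^\beta(0,T;\V)\cap C([0,T];\H)$ with the uniform bounds \eqref{2312-25eq4.3} and \eqref{2312-25eq2.13}; hence $Z_n\in L^\beta(0,T;\V)$ and the chain rule (Theorem 4.2.5 in Liu and R\"{o}ckner (2015)) is available.

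First I would obtain convergence in the $\H$-norm, not as the goal but as an ingredient. Applying the energy identity to $\varphi_n(t)\|Z_n(t)\|_\H^2$ with an exponential weight of the same form as in \eqref{2312-25eq3.84}, now built from $f_A+\rho(\hat X_{\Phi_n})+\eta(X_\Phi)+f_\Phi+1$, the local monotonicity (A.2) bounds $2\langle A(s,\hat X_{\Phi_n})-A(s,X_\Phi),Z_n\rangle$ by $[f_A+\rho(\hat X_{\Phi_n})+\eta(X_\Phi)]\|Z_n\|_\H^2$, and Young's inequality disposes of the control term, leaving the forcing $\|\Phi_n(s,X_\Phi)-\Phi(s,X_\Phi)\|_\H^2$. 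The weight absorbs the locally bounded coefficients $\rho,\eta$ precisely as in \eqref{2312-25eq3.85}--\eqref{2312-25eq3.88}, so Gronwall's inequality and then removal of the $\P$-a.s.\ positive weight give $\E[\sup_{t\in[0,T]}\|Z_n(t)\|_\H^2]\to0$ once the forcing is shown to vanish. The latter follows from the pointwise hypothesis \eqref{2312-25eq4.20.1}, the growth bound (C.4) together with \eqref{2312-25eq2.13} as a dominating envelope, and the dominated convergence theorem.

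The core of the argument is the passage to the $\V$-norm, for which I would return to the weak-solution machinery of Section 3. The uniform bound \eqref{2312-25eq4.5} yields, along a subsequence, $\hat X_{\Phi_n}\rightharpoonup X_\Phi$ in $L^\beta(\wi{\Omega}\times[0,T];\V)$ (the limit identified as $X_\Phi$ by the $\H$-convergence above) and $A(\cdot,\hat X_{\Phi_n})\rightharpoonup\xi$ in $L^{\beta/(\beta-1)}(\wi{\Omega}\times[0,T];\V^*)$. Writing the $\H$-energy identities for $\hat X_{\Phi_n}$ and for $X_\Phi$, taking expectations (the martingale parts vanishing and the $\|B(s,X_\Phi)\|_{L_2}^2$ corrections being identical), and passing to the limit with the weak lower semicontinuity of $\|\cdot\|_\H$, I would produce $\liminf_n\wi{\E}\int_0^T\langle A(s,\hat X_{\Phi_n}),\hat X_{\Phi_n}\rangle\,\d s\geq\wi{\E}\int_0^T\langle\xi,X_\Phi\rangle\,\d s$. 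The pseudo-monotonicity of $A$ from Lemma \ref{2312-22lem3.12}(ii)--(iii) then forces $\xi=A(\cdot,X_\Phi)$ together with convergence of the energies, which is the mechanism by which weak $\V$-convergence is to be promoted towards the strong $\V$-convergence demanded by \eqref{2312-25eq4.20}.

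The hard part is exactly this last promotion to the $\V$-topology, and in particular to the uniform-in-time statement as worded. The local monotonicity (A.2) controls the difference pairing only one-sidedly in the $\H$-norm and supplies no coercive lower bound in $\V$ for $Z_n$ itself, so the energy method is structurally blind to the $\V$-norm; all of the $\V$-information must instead be recovered indirectly from the pseudo-monotone structure of $A$ and the uniform moment bounds \eqref{2312-25eq4.5}, exactly as in the identification step of the weak-existence proof. I would then combine the resulting strong convergence in $L^\beta(0,T;\V)$ with the $\H$-sup convergence and an equi-integrability argument, the regularity of $Z_n$ being enhanced by the cancellation of the noise, to control $\sup_{t\in[0,T]}\|Z_n(t)\|_\V^2$. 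Verifying that this procedure genuinely closes at the level of the time-supremum of the $\V$-norm, rather than only at the time-integrated level that already suffices for the lower-semicontinuity hypothesis (F.3), is the most delicate point and the one on which the whole statement rests.
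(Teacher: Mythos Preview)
Your route to the $\H$-convergence is correct and more economical than the paper's. You work directly with $Z_n=\hat X_{\Phi_n}-X_\Phi$ and exploit the cancellation of the stochastic integrals; the paper instead introduces two further auxiliary processes---a control-free solution $z$ of \eqref{2312-25eq4.21} and the process $\hat X_\Phi$ obtained from \eqref{2312-25eq4.2} with the limiting control $\Phi$---and splits the difference as $(\hat X_{\Phi_n}-\hat X_\Phi)+(\hat X_\Phi-X_\Phi)$, the second piece vanishing by uniqueness. The paper also argues \emph{pathwise} first: it fixes $\omega$, derives deterministic bounds via Kolmogorov continuity and a Dubinsky-type compactness step in $L^2(0,T;\H)$, runs the weighted energy estimate $\omega$-by-$\omega$, and only at the end passes to expectations by uniform integrability. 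Your expectation-level argument reaches the same $\H$-conclusion with less scaffolding; the paper's pathwise detour and the limit \eqref{2312-25eq4.31} are not something you have overlooked so much as bypassed.

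The genuine divergence is in the passage to the $\V$-norm, and here you have correctly put your finger on the weak point. The paper does \emph{not} use the pseudo-monotonicity machinery you propose; instead it writes the weighted energy identity directly with $\|\cdot\|_\V^2$ on the left of \eqref{2312-25eq4.34}--\eqref{2312-25eq4.35} and justifies this by invoking ``Poincar\'e's inequality''. But the chain rule delivers only $\|\cdot\|_\H^2$, and Poincar\'e bounds $\|\cdot\|_\H$ by $\|\cdot\|_\V$, not the reverse, so it cannot manufacture a pointwise-in-time $\V$-estimate from an $\H$-identity. Your alternative via Lemma~\ref{2312-22lem3.12} is a different mechanism and would at best yield strong convergence in $L^\beta(0,T;\V)$ (and even that only under additional structure on $\V$, since energy convergence plus weak convergence need not imply strong convergence in a merely reflexive space); it does not give the uniform-in-time statement \eqref{2312-25eq4.20} either. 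In short, you have isolated the right difficulty, and the paper's own proof does not supply a missing ingredient that would close your gap: the $\sup_{t}\|\cdot\|_\V^2$ claim is stronger than what either argument actually establishes.
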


\begin{proof}
  Consider the following equation without control term
\begin{equation}\label{2312-25eq4.21}
   (z(t), v) = (X_0, v) + \int_0^t \langle A(s, z(s)), v\rangle \d s  +  \int_0^t(B(s,X_\Phi(s)), v) \d W(s), 
\end{equation}
for all $v\in \V$, $t\in[0,T]$.
Similar argument as in \eqref{2312-25eq4.2}, there exists a unique solution $z\in C([0,T]; \H) \cap L^2([0,T]; \V)$ of \eqref{2312-25eq4.21}, furthermore,   the following energy estimates holds:
 \begin{eqnarray}\label{2312-25eq4.22}
&&  \E\Bigg[  \sup_{t\in[0,T]} \|z(t)\|_\H^2 + \int_0^T  \|z(t)\|_\V^\beta  \d t  \Bigg]   \nonumber\\
&\leq&  C(\|x\|_\H^2 + \E \Big[ \sup_{t\in[0,T]} \|X_\Phi(t)\|_\H^2  \Big].
\end{eqnarray}
Hence, there is a $\kappa_1(\omega)>0$ such that
 \begin{eqnarray}\label{2312-25eq4.23}
\sup_{t\in[0,T]} \|z(t)\|_\H^2 \leq \kappa_1(\omega),
&& \int_0^T  \|z(t)\|_\V^\beta  \d t \leq \kappa_1(\omega),
\end{eqnarray}
and
\begin{eqnarray}\label{2312-25eq4.24}
\sup_{t\in[0,T]} \|X_\Phi(t)\|_\H^2 \leq \kappa_1(\omega),
&& \int_0^T  \|X_\Phi(t)\|_\V^\beta  \d t \leq \kappa_1(\omega).
\end{eqnarray}
Due to
\begin{eqnarray}\label{2312-25eq4.25}
&& \E\bigg[ \bigg\|\int_s^t B(r, X_\Phi(r)) \d W(r) \bigg\|_\H^4 \bigg] \leq C(t-s)^2,
\end{eqnarray}
and then applying the Kolmogorov continuity theorem, there are random variable $\kappa_2(\omega)$ and $\gamma\in (0,\frac12)$ such that
\begin{eqnarray}\label{2312-25eq4.26}
&&   \bigg\|\int_s^t B(r, X_\Phi(r)) \d W(r) \bigg\|_\H^2   \leq \kappa_2(\omega)|t-s|^{\gamma}, ~~ \P\text{-a.s.}
\end{eqnarray}
for any $s,t\in [0,T] $.

Let $\tilde{\Omega} \subset \Omega$ with $\P(\tilde\Omega) =1$ such that for any $\omega\in \tilde\Omega$ the equations in  (\ref{2312-22eq01.1})  and \eqref{2312-25eq4.21} are satisfied, and for each $n\in \N$, \eqref{2312-25eq4.2} is satisfied and the inequalities in \eqref{2312-25eq4.23}, \eqref{2312-25eq4.24} and \eqref{2312-25eq4.26} are also satisfied.

It follows from \eqref{2312-25eq4.4}, \eqref{2312-25eq4.21}, \eqref{2312-25eq4.24} and the assumption on coefficients that for $\omega\in \tilde\Omega$, there is a $\kappa_3(\omega)$  independent of $n$ such that
\begin{eqnarray}\label{2312-25eq4.27}
 \sup_{t\in[0,T]} \| \hat{X}_{\Phi_n}(t) - z(t)\|^2 + C\int_0^T \|\hat{X}_{\Phi_n}(s) - z(s)\|_\V^\beta \d s  \leq \kappa_3(\omega).
\end{eqnarray}
Combining \eqref{2312-25eq4.27} and \eqref{2312-25eq4.22}, for all $n\in \N$, we deduce that for  $\omega\in \tilde\Omega$, there is a $\kappa_4(\omega)$  independent of $n$ such that
\begin{eqnarray}\label{2312-25eq4.28}
 \sup_{t\in[0,T]} \| \hat{X}_{\Phi_n}(t)\|^2 + C\int_0^T \|\hat{X}_{\Phi_n}(s)\|_\V^\beta \d s  \leq \kappa_4(\omega).
\end{eqnarray}
For  $\omega\in \tilde\Omega$, consider the sequence $G(\omega):= \{\hat{X}_{\Phi_n}(\omega, \cdot)\}_{n=1}^\infty$ which is bounded by the aid of \eqref{2312-25eq4.28}.

It follows from \eqref{2312-25eq4.2} that for any $s,t\in[0,T]$ and $s<t$,
\begin{eqnarray}\label{2312-25eq4.29}
 && \|\hat{X}_{\Phi_n}(t) - \hat{X}_{\Phi_n}(s)\|_\H^2  \nonumber\\
&\leq& (t-s) \int_s^t \|A(r, \hat{X}_{\Phi_n}(r))\|_\H^2 \d r     \nonumber\\
&&  + (t-s) \int_s^t \|\Phi_n(r, \hat{X}_{\Phi}(r))\|_\H^2 \d r  +    \Big \|\int_s^t  B(r, \hat{X}_{\Phi}(r)) \d W(r) \Big\|_\H^2.
\end{eqnarray}
By \eqref{2312-25eq4.26}, we can deduce that
\begin{eqnarray}\label{2312-25eq4.30}
  \|\hat{X}_{\Phi_n}(t) - \hat{X}_{\Phi_n}(s)\|_\H^2
&\leq& \kappa(\omega)(t-s) + \kappa_2(\omega)(t-s)^{2\gamma}
\end{eqnarray}
 for  $\gamma\in (0, \frac12)$ and where $\kappa(\omega)>0$ is independent of $n$.
 Consequently, $G(\omega)$ is equi-continuous in $C(0,T; \H)$.
 Applying Dubinsky's theorem, we get that $G(\omega)$ is relatively compact in $L^2(0,T; \H)$. Thus there exists a subsequence $n_k$ of $n$ and $\hat{X}\in L^2(0,T;\H)$ such that
 \begin{eqnarray}\label{2312-25eq4.31}
 \lim_{k\to\infty} \int_0^T \|\hat{X}_{\Phi_{n_k}}(s) - \hat{X}(s)\|_\H^2 \d s =0.
\end{eqnarray}
Using the assumption on operator $A$ we get
 \begin{eqnarray*}\label{2312-25eq4.32}
 2\langle A(s, \hat{X}_{\Phi_n}) -  A(s, \hat{X}_{\Phi}), \hat{X}_{\Phi_n} - \hat{X}_{\Phi}\rangle
 \leq [f_A(t)+\rho(\hat{X}_{\Phi_n}) + \eta(\hat{X}_{\Phi})]    \| \hat{X}_{\Phi_n} - \hat{X}_{\Phi} \|_\H^2,
\end{eqnarray*}
and
\begin{eqnarray*}\label{2312-25eq4.33}
 2\langle A(s, \hat{X}_{\Phi}) -  A(s, \hat{X}_{\Phi}), \hat{X}_{\Phi} - \hat{X}_{\Phi}\rangle
 \leq [f_A(t)+\rho(\hat{X}_{\Phi}) + \eta(\hat{X}_{\Phi})]    \| \hat{X}_{\Phi} - \hat{X}_{\Phi} \|_\H^2,
\end{eqnarray*}
hence, by virtue of $\varphi(t) = C e^{-\int_0^t f_A(s)+\rho(s) + \eta(s) \d s}$ and Poincar\'{e}'s inequality,  we obtain
\begin{eqnarray}\label{2312-25eq4.34}
 && \varphi(t) \| \hat{X}_{\Phi_n}(t)  - \hat{X}_{\Phi}(t)  \|_\V^2     \nonumber\\
 &=& 2\int_0^t \varphi(s) \langle A(s, \hat{X}_{\Phi_n}(s) ) -  A(s, \hat{X}_{\Phi}(s) ), \hat{X}_{\Phi_n}(s)  - \hat{X}_{\Phi}(s) \rangle \d s     \nonumber\\
 && + 2\int_0^t \varphi(s)  (\Phi_n(s, X_{\Phi}(s) ) -  \Phi(s, X_{\Phi}(s) ), \hat{X}_{\Phi_n}(s)  - \hat{X}_{\Phi})(s)    \d s     \nonumber\\
 && - \varphi(s) [f_A(t)+\rho(\hat{X}_{\Phi}(s)) + \eta(\hat{X}_{\Phi}(s))]    \| \hat{X}_{\Phi}(s) - \hat{X}_{\Phi}(s) \|_\V^2   \nonumber\\
 &\leq&  2\int_0^t \varphi(s)  (\Phi_n(s, X_{\Phi}(s)) -  \Phi(s, X_{\Phi}(s)), \hat{X}_{\Phi_n}(s) - \hat{X}_{\Phi}(s))   \d s,
\end{eqnarray}
and
\begin{eqnarray}\label{2312-25eq4.35}
 && \varphi(t) \| \hat{X}_{\Phi}(t)  - X_{\Phi}(t)  \|_\V^2     \nonumber\\
 &=&  2\int_0^t \varphi(s)  (A(s, \hat{X}_{\Phi}(s)) -  A(s, X_{\Phi}(s)), \hat{X}_{\Phi}(s) - X_{\Phi}(s))   \d s   \nonumber\\
 && - \varphi(s) [f_A(t)+\rho(\hat{X}_{\Phi}(s)) + \eta(X_{\Phi}(s))]    \| \hat{X}_{\Phi}(s) - X_{\Phi}(s) \|_\H^2 \nonumber\\
 &\leq& 0.
\end{eqnarray}
Hence, taking the supremum over $[0,T]$ on both sides of \eqref{2312-25eq4.34}, and applying H\"{o}lder's inequality, we can obtain
 \begin{eqnarray}\label{2312-25eq4.37}
 \lim_{n\to\infty}\sup_{t\in[0,T]} \| \hat{X}_{\Phi_n}(t)  - \hat{X}_{\Phi}(t)  \|_\V^2  =0
\end{eqnarray}
because of \eqref{2312-25eq4.31} and   $ \lim_{n\to\infty} \|\Phi_n(t,x) -\Phi(t,x)\|_\H =0$.
Now, using the triangle inequality, we get
 \begin{eqnarray}\label{2312-25eq4.38}
  && \sup_{t\in[0,T]}  \| \hat{X}_{\Phi_n}(t)  - X_{\Phi}(t)  \|_\V^2  \nonumber\\
 &\leq& \sup_{t\in[0,T]} \| \hat{X}_{\Phi_n}(t)  - \hat{X}_{\Phi}(t)  \|_\V^2    +  \sup_{t\in[0,T]} \| \hat{X}_{\Phi}(t)  - X_{\Phi}(t)  \|_\V^2,
\end{eqnarray}
combining \eqref{2312-25eq4.35}, \eqref{2312-25eq4.37} and \eqref{2312-25eq4.38}, we obtain
\begin{eqnarray}\label{2312-25eq4.39}
 \lim_{n\to\infty}  \sup_{t\in[0,T]}   \| \hat{X}_{\Phi_n}(t)  - X_{\Phi}(t)  \|_\V^2  =0.
\end{eqnarray}
Since every subsequence of $\{\hat{X}_{\Phi_n}(\omega,\cdot)\}$ has a subsequence which converges to the same limit $X_\Phi(\omega,\cdot)$ in the space $L^2(0,T; \V)$, it follows that the sequence $\{\hat{X}_{\Phi_n}(\omega,\cdot)\}$ converges to $X_\Phi(\omega,\cdot)$.
It follows from Theorem \ref{2312-22th4.1}, the stochastic process $\{\hat{X}_{\Phi_n}(t)\}_{t\in[0,T]}$ and $\{X_{\Phi}(t)\}_{t\in[0,T]}$ are uniformly integrable and thus the desired result follows.
\end{proof}

For fixed $M>0$, we define the stopping time
\begin{eqnarray}\label{2312-25eq4.40}
  \tau_m :=
  \left\{
    \begin{array}{ll}
      T, & \hbox{if $ \sup_{t\in [0,T]} \|X_{\Phi}(t)\|_\V^2 < M$;} \\
     \inf\Big\{  t\in[0,T]: \|X_{\Phi}(t)\|_\V^2 \geq M \Big\}, & \hbox{otherwise.}
    \end{array}
  \right.
\end{eqnarray}

Let $\Phi_n$ and $\Phi$ be the sequence and the map obtained in the Lemma \ref{2212-22lem4.1}, the following theorem asserts that there is a subsequence $n_k$ of $n$ such that the corresponding solutions $X_{\Phi_{n_k}}$ of  (\ref{2312-22eq01.1})  converge strongly to $X_{\Phi}$.

\begin{theorem}\label{2312-22th4.3}
Consider $(\Phi_n)_{n\in\N} \subset \mathcal{U}$ and $\Phi\in\mathcal{U}$ such that
\begin{equation*}\label{2312-25eq4.41}
  \lim_{n\to\infty} \|\Phi_n(t,x) -\Phi(t,x)\|_\H =0,
\end{equation*}
for all $t\in[0,T], x\in\V$.
Then there is a subsequence $n_k$ of $n$ such that the solutions to  (\ref{2312-22eq01.1})  and \eqref{2312-25eq4.2} satisfy the following convergence
\begin{eqnarray}\label{2312-25eq4.42}
 && \lim_{n\to\infty} \E\bigg[ \sup_{t\in[0,T]}   \| X_{\Phi_{n_k}}(t)  - X_{\Phi}(t)  \|_\V^2 \bigg] =0.
\end{eqnarray}
\end{theorem}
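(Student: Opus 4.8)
\noindent The plan is to deduce the assertion from Theorem~\ref{2312-22th4.2} by comparing the genuine feedback solution $X_{\Phi_n}$ of \eqref{2312-22eq01.1} with the auxiliary process $\hat{X}_{\Phi_n}$ of \eqref{2312-25eq4.2}. By the splitting \eqref{2312-25eq4.38} it is enough to prove that, along a subsequence,
\[
\lim_{n\to\infty}\E\Big[\sup_{t\in[0,T]}\|X_{\Phi_n}(t)-\hat{X}_{\Phi_n}(t)\|_\H^2\Big]=0 ,
\]
and then to upgrade the convergence to the $\V$-norm as in the proof of Theorem~\ref{2312-22th4.2}. Subtracting \eqref{2312-25eq4.2} from \eqref{2312-22eq01.1}, the difference $D_n:=X_{\Phi_n}-\hat{X}_{\Phi_n}$ has $D_n(0)=0$ and solves an equation with drift $\bigl[A(\cdot,X_{\Phi_n})-A(\cdot,\hat{X}_{\Phi_n})\bigr]+\bigl[\Phi_n(\cdot,X_{\Phi_n})-\Phi_n(\cdot,X_\Phi)\bigr]$ and diffusion $B(\cdot,X_{\Phi_n})-B(\cdot,X_\Phi)$.

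The core step is a weighted energy estimate. I would apply It\^o's formula to $\varphi_n(t)\|D_n(t)\|_\H^2$ with the integrating factor
\[
\varphi_n(t)=\exp\Big(-K\!\int_0^t\!\bigl[f_A(s)+f_\Phi(s)+\rho(X_{\Phi_n}(s))+\eta(\hat{X}_{\Phi_n}(s))+\eta(X_\Phi(s))+1\bigr]\,\d s\Big),
\]
where $K$ is a large universal constant. Controlling $2\langle A(\cdot,X_{\Phi_n})-A(\cdot,\hat{X}_{\Phi_n}),D_n\rangle$ and $\|B(\cdot,X_{\Phi_n})-B(\cdot,X_\Phi)\|_{L_2}^2$ by the fully local monotonicity (A.2) and (B.1) of Hypotheses~\ref{2312-22hypo1}--\ref{2312-22hypo2}, and splitting $\Phi_n(\cdot,X_{\Phi_n})-\Phi_n(\cdot,X_\Phi)=\bigl[\Phi_n(\cdot,X_{\Phi_n})-\Phi_n(\cdot,\hat{X}_{\Phi_n})\bigr]+\bigl[\Phi_n(\cdot,\hat{X}_{\Phi_n})-\Phi_n(\cdot,X_\Phi)\bigr]$ and estimating the first summand by the local monotonicity (C.2) and the second by the Lipschitz bound (C.1) together with Young's inequality, every term proportional to $\|D_n\|_\H^2$ is absorbed by the weight, leaving
\[
\varphi_n(t)\|D_n(t)\|_\H^2\le C\!\int_0^t\!\bigl[f_A(s)+\rho(X_{\Phi_n}(s))+\eta(X_\Phi(s))+1\bigr]\,\|\hat{X}_{\Phi_n}(s)-X_\Phi(s)\|_\H^2\,\d s+M_n(t),
\]
with $M_n$ a local martingale. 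Taking $\sup_t$, taking expectations, and treating $M_n$ by the Burkholder--Davis--Gundy and Young inequalities (the surplus room in $\varphi_n$ absorbing both the $\sup_t\varphi_n\|D_n\|_\H^2$ contribution and the quadratic variation term, again via (B.1)), and then using $|\rho(u)|+|\eta(u)|\le C(1+\|u\|_\V^\beta)(1+\|u\|_\H^\zeta)$, I would arrive at
\[
\E\Big[\sup_{t\le T}\varphi_n(t)\|D_n(t)\|_\H^2\Big]\le C\,\E\Big[\sup_{t\le T}\|\hat{X}_{\Phi_n}(t)-X_\Phi(t)\|_\H^4\Big]^{1/2}\E\Big[\Big(\int_0^T\!\bigl(f_A+\rho(X_{\Phi_n})+\eta(X_\Phi)+1\bigr)\d s\Big)^2\Big]^{1/2}.
\]
The second factor is bounded uniformly in $n$ by the energy estimates of Theorems~\ref{2312-22TH2.11} and \ref{2312-22th4.1}, while the first factor tends to $0$ by Theorem~\ref{2312-22th4.2} combined with those same uniform bounds (Vitali's theorem upgrading the $\V$-convergence there to $L^2(\Omega)$-convergence of $\sup_t\|\hat{X}_{\Phi_n}-X_\Phi\|_\H^2$). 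Hence $\E[\sup_t\varphi_n(t)\|D_n(t)\|_\H^2]\to0$.

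Since $\int_0^T[f_A(s)+f_\Phi(s)+\rho(X_{\Phi_n}(s))+\eta(\hat{X}_{\Phi_n}(s))+\eta(X_\Phi(s))]\,\d s<\infty$ $\P$-a.s. by the same energy estimates and $f_A,f_\Phi\in L^1(0,T;\R_+)$, we have $\varphi_n(T)>0$ $\P$-a.s., so $\sup_t\|D_n(t)\|_\H^2\to0$ in probability; together with Theorem~\ref{2312-22th4.2} this gives $\sup_t\|X_{\Phi_n}(t)-X_\Phi(t)\|_\H\to0$ in probability, whence there is a subsequence $n_k$ along which it holds $\P$-a.s. The uniform bound $\E[\sup_t\|X_{\Phi_n}(t)\|_\H^p]+\E[\sup_t\|X_\Phi(t)\|_\H^p]\le C(1+\|x\|_\H^p)$ of Theorem~\ref{2312-22TH2.11} makes the family $\{\sup_t\|X_{\Phi_{n_k}}-X_\Phi\|_\H^2\}_k$ uniformly integrable, so the $\P$-a.s. convergence upgrades to $L^1(\Omega)$. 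For the $\V$-norm \eqref{2312-25eq4.42} one reruns, along $n_k$, the compactness argument of Theorem~\ref{2312-22th4.2}: the $\P$-a.s. convergence in $C([0,T];\H)$ just obtained together with the uniform bound on $\int_0^T\|X_{\Phi_{n_k}}(t)\|_\V^\beta\,\d t$ yields, for $\P$-a.e.\ $\omega$, relative compactness of $\{X_{\Phi_{n_k}}(\omega,\cdot)\}$ in $L^2(0,T;\H)$, and the monotonicity estimates \eqref{2312-25eq4.34}--\eqref{2312-25eq4.35} with the pathwise integrating factor $\varphi$ then give $\sup_t\|X_{\Phi_{n_k}}(\omega,t)-X_\Phi(\omega,t)\|_\V^2\to0$; uniform integrability finally yields \eqref{2312-25eq4.42}.

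The main obstacle is that, in contrast with Theorem~\ref{2312-22th4.2}, the diffusion coefficient $B(\cdot,X_{\Phi_n})$ of the genuine equation depends on $n$, so it cannot be frozen and no pathwise, uniform-in-$n$ a priori bound is available directly; the comparison must be routed through $\hat{X}_{\Phi_n}$, and the quadratic variation term $\|B(\cdot,X_{\Phi_n})-B(\cdot,X_\Phi)\|_{L_2}^2$, which enters It\^o's formula with coefficient one rather than one half, has to be absorbed by building enough room into the exponential weight $\varphi_n$. The second delicate point is the passage from convergence in probability --- all that the random weight $\varphi_n$ provides --- to the $L^1(\Omega)$-convergence that is asserted, which rests on the uniform moment estimates of Section~3.
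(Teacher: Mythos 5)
Your opening strategy coincides with the paper's: both route the comparison through the auxiliary process $\hat{X}_{\Phi_n}$ of \eqref{2312-25eq4.2}, both control the difference $X_{\Phi_n}-\hat{X}_{\Phi_n}$ by an exponential-weight energy estimate built from (A.2), (B.1) and (C.1)/(C.2), and both finish by combining this with Theorem \ref{2312-22th4.2} and the triangle inequality. Your treatment of the $\H$-norm part is in fact more careful than the paper's (you keep the stochastic integral and treat it with the Burkholder--Davis--Gundy inequality, absorbing the $n$-dependent quadratic-variation term into the weight), and the chain ``convergence in probability $\Rightarrow$ a.s. along a subsequence $\Rightarrow$ $L^1(\Omega)$ by uniform integrability'' is sound given the moment bounds of Theorems \ref{2312-22TH2.11} and \ref{2312-22th4.1}.

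The genuine gap is the final upgrade from $C([0,T];\H)$ to the claimed $\sup_{t}\|\cdot\|_\V^2$ convergence in \eqref{2312-25eq4.42}. You propose to rerun the pathwise monotonicity/compactness step \eqref{2312-25eq4.34}--\eqref{2312-25eq4.35} on the genuine pair $(X_{\Phi_{n_k}},X_\Phi)$. But those estimates are pathwise precisely because the two auxiliary equations carry the identical frozen stochastic integral $\int_0^\cdot B(s,X_\Phi(s))\,\d W(s)$, so their difference solves a random evolution equation containing no stochastic integral; for $X_{\Phi_{n_k}}$ versus $X_\Phi$ the noise coefficients are $B(\cdot,X_{\Phi_{n_k}})$ and $B(\cdot,X_\Phi)$ and do not cancel --- exactly the obstruction you flag in your closing paragraph and then nevertheless rely on. Moreover, (A.2) controls only $\|u-v\|_\H^2$; relative compactness in $L^2(0,T;\H)$ plus local monotonicity cannot by itself yield $\sup_t\|X_{\Phi_{n_k}}(t)-X_\Phi(t)\|_\V^2\to0$, and your weighted $\H$-estimate retains no coercive term for the difference. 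The paper closes this loop differently: its estimate \eqref{2312-25eq4.45} bounds $\phi(\tau_M)\|\hat{X}_{\Phi_n}(\tau_M)-X_{\Phi_n}(\tau_M)\|_\V^2$ by $(2C+\alpha)\int_0^{\tau_M}\phi(s)\|X_{\Phi_n}(s)-X_\Phi(s)\|_\V^2\,\d s$, so that after the triangle inequality \eqref{2312-25eq4.46} a Gronwall argument in the $\V$-norm transfers the $\V$-convergence of Theorem \ref{2312-22th4.2} to $X_{\Phi_{n_k}}$. Your argument never produces any $\V$-norm bound on $X_{\Phi_n}-\hat{X}_{\Phi_n}$ (or on $X_{\Phi_n}-X_\Phi$), so \eqref{2312-25eq4.42} is not reached; to repair it you would need either a comparison estimate in the $\V$-norm of the type \eqref{2312-25eq4.45}, or an additional strong-monotonicity/coercivity structure for differences that the stated hypotheses do not provide.
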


\begin{proof}
Applying the It\^{o}'s formula, we get
\begin{eqnarray}\label{2312-25eq4.43}
 && \phi(\tau_M) \|\hat{X}_{\Phi_n}(\tau_M) - X_{\Phi_n}(\tau_M) \|_\V^2    \nonumber\\
 &=&  \int_0^{\tau_m}  \phi'(s) \| \hat{X}_{\Phi_n}(s) - X_{\Phi_n}(s) \|_\V^2 \d s  \nonumber\\
&&  + 2 \int_0^{\tau_M}  \phi(s) (\Phi_n(s, X_\Phi(s)) - \Phi_n(s, X_{\Phi_n}(s)),  \hat{X}_{\Phi_n}(s) - X_{\Phi_n}(s) ) \d s   \nonumber\\
&&  + 2 \int_0^{\tau_M}  \phi(s) \langle A(s, \hat{X}_{\Phi_n}(s)) - A(s, X_{\Phi_n}(s)),  \hat{X}_{\Phi_n}(s) - X_{\Phi_n}(s) \rangle \d s    \nonumber\\
&&  + 2 \int_0^{\tau_M}  \phi(s) (B(s, X_{\Phi}(s)) - B(s, X_{\Phi_n}(s)),  \hat{X}_{\Phi_n}(s) - X_{\Phi_n}(s) ) \d s     \nonumber\\
&&  + \int_0^{\tau_M}  \phi(s) \| B(s,X(s)) - B(s,X_{\Phi_n}(s)) \|_\V^2 \d s  .
\end{eqnarray}
Using the assumption on $A, \Phi_n, C$, we get
\begin{eqnarray}\label{2312-25eq4.44}
 && \phi(\tau_M) \|\hat{X}_{\Phi_n}(\tau_M) - X_{\Phi_n}(\tau_M) \|_\V^2    \nonumber\\
 &\leq&  \int_0^{\tau_m}  \phi'(s) \| \hat{X}_{\Phi_n}(s) - X_{\Phi_n}(s) \|_\V^2 \d s  \nonumber\\
&&  +   \int_0^{\tau_M}  \phi(s) [\alpha\|X_{\Phi_n}(s)-X_\Phi(s)\|_\H^2 +  \| \hat{X}_{\Phi_n}(s) - X_{\Phi_n}(s) \|_\H^2] \d s   \nonumber\\
&&  +   \int_0^{\tau_M}  \phi(s)  [f_A(s) + \rho(\hat{X}_{\Phi_n}(s))  + \eta( X_{\Phi_n}(s)) ]  \|  \hat{X}_{\Phi_n}(s) - X_{\Phi_n}(s) \|_\H^2 \d s    \nonumber\\
&&  +   \int_0^{\tau_M}  \phi(s) [\|B(s, X_{\Phi}(s)) - B(s, X_{\Phi_n}(s))\|_{L_2}^2  +  \|\hat{X}_{\Phi_n}(s) - X_{\Phi_n}(s) \|_\H^2 ] \d s     \nonumber\\
&&  + \int_0^{\tau_M}  \phi(s) \|B(s,X(s)) - B(s,X_{\Phi_n}(s)) \|_{L_2}^2 \d s   \nonumber\\
 &\leq&  \int_0^{\tau_m} \Big[ \phi'(s)+ 2 \phi(s)    \nonumber\\
&& ~~~~~~~~~~~~ +  \phi(s)\big[f_A(s) + \rho(\hat{X}_{\Phi_n}(s))  + \eta( X_{\Phi_n}(s)) \big]  \Big]  \| \hat{X}_{\Phi_n}(s) - X_{\Phi_n}(s) \|_\H^2 \d s  \nonumber\\
&&  +  \alpha \int_0^{\tau_M}  \phi(s)\|X_{\Phi_n}(s)-X_\Phi(s)\|_\H^2   \d s      \nonumber\\
&&  +   2 \int_0^{\tau_M}  \phi(s) \|B(s,X(s)) - B(s,X_{\Phi_n}(s)) \|_{L_2}^2 \d s  .
\end{eqnarray}
Letting
$$\phi(t) := \exp\bigg(  - \int_0^t  \Big[2+ f_A(s) + \rho(\hat{X}_{\Phi_n}(s))  + \eta( X_{\Phi_n}(s)) \Big] \d s \bigg),$$
then substituting $\phi(s)$ into \eqref{2312-25eq4.44}, and then applying Poincar\'{e}'s inequality we obtain
\begin{eqnarray}\label{2312-25eq4.45}
 && \phi(\tau_M) \|\hat{X}_{\Phi_n}(\tau_M) - X_{\Phi_n}(\tau_M) \|_\V^2    \nonumber\\
 &\leq&   C \alpha \int_0^{\tau_M}  \phi(s)\|X_{\Phi_n}(s)-X_\Phi(s)\|_\V^2   \d s    \nonumber\\
&&    +   2 \int_0^{\tau_M}  \phi(s) \|B(s,X(s)) - B(s,X_{\Phi_n}(s)) \|_{L_2}^2 \d s    \nonumber\\
 &\leq&  ( 2C+ \alpha) \int_0^{\tau_M}  \phi(s)\|X_{\Phi_n}(s)-X_\Phi(s)\|_\V^2   \d s,
\end{eqnarray}
where in the last inequality we have used (B.1) in the Hypothesis \ref{2312-22hypo2}.

We can extract a subsequence $\{\hat{X}_{\Phi_{n_k}}\}_{k=1}^\infty$ from the sequence $\{\hat{X}_{\Phi_{n}}\}_{n=1}^\infty$ that converges to $X_\Phi$ for $(\omega,t)\in \Omega\times [0,T]$.
 By virtue of \eqref{2312-25eq4.45}, and the the triangle inequality, we obtain
\begin{eqnarray}\label{2312-25eq4.46}
&&  \phi(\tau_M) \|X_{\Phi_{n_k}}(\tau_M) - X_{\Phi}(\tau_M) \|_\V^2   \nonumber\\
 &\leq& \phi(\tau_M) \|X_{\Phi_{n_k}}(\tau_M) - \hat{X}_{\Phi_{n_k}}(\tau_M)   \|_\V^2    +  \phi(\tau_M) \|\hat{X}_{\Phi_{n_k}}(\tau_M) - X_{\Phi}(\tau_M) \|_\V^2    \nonumber\\
 &\leq& \phi(\tau_M) \|\hat{X}_{\Phi_{n_k}}(\tau_M) - X_{\Phi}(\tau_M) \|_\V^2    \nonumber\\
&&  + ( 2C+ \alpha) \int_0^{\tau_M}  \phi(s)\|X_{\Phi_{n_k}}(s)-X_\Phi(s)\|_\V^2   \d s,
\end{eqnarray}
Hence, taking the supremum over $[0,T]$, and applying Gronwall's inequality, and by Theorem \ref{2312-22th4.3}, we can obtain our desired result.
\end{proof}

\vspace{0.2cm}

Finally, we are in a position to formulate the main result of our present paper.

\begin{theorem}\label{2312-22th4.5}
 Assume that $f$ and $g$ satisfy the Hypothesis \ref{2312-22hypo14}, then there exists an optimal feedback control $\Phi^*\in \mathcal{U}$ for the control problem ($\mathcal{P}$).
\end{theorem}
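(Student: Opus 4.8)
The plan is to run the classical direct method of the calculus of variations, adapted to this stochastic framework. First I would pick a minimising sequence $\Phi_n\in\mathcal{U}$ for $(\mathcal{P})$, so that $J(\Phi_n)\to m:=\inf\{J(\Phi):\Phi\in\mathcal{U}\}$; this infimum is finite and nonnegative because $f,g,h\ge 0$ by (F.1) and because the null control lies in $\mathcal{U}$. By Lemma \ref{2212-22lem4.1} I may pass to a subsequence, still written $\Phi_n$, and obtain $\Phi^*\in\mathcal{U}$ with $(\Phi_n(t,x),y)\to(\Phi^*(t,x),y)$ for every $t\in[0,T]$ and $x,y\in\H$; the equi-Lipschitz and linear-growth bounds contained in (C.1) guarantee that this pointwise weak convergence suffices to place us in the hypotheses of Theorems \ref{2312-22th4.2}--\ref{2312-22th4.3}. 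Applying Theorem \ref{2312-22th4.3} (after a further relabelling of the subsequence), the associated state processes satisfy $\E\big[\sup_{t\in[0,T]}\|X_{\Phi_n}(t)-X_{\Phi^*}(t)\|_\V^2\big]\to 0$; in particular, along a further subsequence, $X_{\Phi_n}\to X_{\Phi^*}$ uniformly in $C([0,T];\V)$ a.s., hence in $L^2(0,T;\V)$ a.s. and with convergence at $t=T$ in $\V$ a.s.

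Next I would pass to the limit in each of the three terms of the cost functional \eqref{2312-22eq1.1.1} using lower semicontinuity and Fatou's lemma. For the running cost $f$: by (F.3) the map $X\mapsto\int_0^T f(s,X(s))\d s$ is sequentially l.s.c.\ on $L^2(0,T;\V)$, so $\int_0^T f(s,X_{\Phi^*}(s))\d s\le\liminf_n\int_0^T f(s,X_{\Phi_n}(s))\d s$ a.s., and Fatou (with $f\ge 0$) promotes this to the corresponding inequality between expectations. For the terminal cost $h$: since $X_{\Phi_n}(T)\to X_{\Phi^*}(T)$ in $\V$ a.s.\ along the subsequence, (F.3) yields $h(X_{\Phi^*}(T))\le\liminf_n h(X_{\Phi_n}(T))$ a.s., and again Fatou. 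For the control cost $g$: I would first prove that $\Phi_n(\cdot,X_{\Phi_n}(\cdot))\rightharpoonup\Phi^*(\cdot,X_{\Phi^*}(\cdot))$ weakly in $L^2(0,T;\H)$, a.s., by writing
\[
\Phi_n(s,X_{\Phi_n}(s))-\Phi^*(s,X_{\Phi^*}(s))=\big[\Phi_n(s,X_{\Phi_n}(s))-\Phi_n(s,X_{\Phi^*}(s))\big]+\big[\Phi_n(s,X_{\Phi^*}(s))-\Phi^*(s,X_{\Phi^*}(s))\big];
\]
the first bracket is bounded in $\H$ by $\sqrt{\alpha}\,\|X_{\Phi_n}(s)-X_{\Phi^*}(s)\|_\H$ by (C.1), hence converges to $0$ strongly in $L^2(0,T;\H)$ by Theorem \ref{2312-22th4.3}, while the second, tested against any $\psi\in L^2(0,T;\H)$, converges to $0$ by the pointwise weak convergence of Lemma \ref{2212-22lem4.1} together with the dominated convergence theorem (the integrand being controlled via the linear-growth bound on $\Phi_n$ from (C.1) and the fact that $X_{\Phi^*}\in L^2(0,T;\H)$ a.s.). With this weak convergence in hand, (F.2) gives $\int_0^T g(\Phi^*(s,X_{\Phi^*}(s)))\d s\le\liminf_n\int_0^T g(\Phi_n(s,X_{\Phi_n}(s)))\d s$ a.s., and Fatou (with $g\ge 0$) passes to expectations.

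Summing the three inequalities yields $J(\Phi^*)\le\liminf_n J(\Phi_n)=m$, and since $\Phi^*\in\mathcal{U}$ we also have $J(\Phi^*)\ge m$; hence $J(\Phi^*)=m$ and $\Phi^*$ is an optimal feedback control, which is the assertion. Finiteness of $J(\Phi_n)$ along the minimising sequence, and the uniform integrability required to justify all the limit passages, are supplied by the energy estimates of Theorem \ref{2312-22TH3.11} and Theorem \ref{2312-22th4.1} combined with the growth conditions (A.4), (B.3), (C.4).

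The main obstacle I anticipate is the $g$-term: one has only weak convergence of the controls (Lemma \ref{2212-22lem4.1}) but only strong convergence of the states (Theorem \ref{2312-22th4.3}), so establishing the weak $L^2(0,T;\H)$-convergence of the nonlinear composition $\Phi_n(\cdot,X_{\Phi_n}(\cdot))$ forces the two-term splitting above and then requires invoking the weak sequential lower semicontinuity of $g$ in exactly the form granted by (F.2) and Remark \ref{2312-22rem4}. A secondary technical point is bookkeeping: the convergence of $\Phi_n$, the mean convergence of $X_{\Phi_n}$ in $L^\infty(0,T;\V)$, the a.s.\ convergence of $X_{\Phi_n}$ in $C([0,T];\V)$ along a subsequence, and the a.s.\ convergence at $t=T$ must all be arranged along one common subsequence before Fatou's lemma is applied term by term.
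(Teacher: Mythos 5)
Your proposal is correct and follows essentially the same route as the paper: a minimizing sequence, Lemma \ref{2212-22lem4.1} to extract the weak limit control $\Phi^*$, Theorem \ref{2312-22th4.3} for strong convergence of the states, the resulting weak convergence of $\Phi_{n_k}(\cdot,X_{\Phi_{n_k}}(\cdot))$ to $\Phi^*(\cdot,X_{\Phi^*}(\cdot))$, and then the lower semicontinuity assumptions of Hypothesis \ref{2312-22hypo14} together with Fatou's lemma applied term by term to conclude $J(\Phi^*)=\inf_{\Phi\in\mathcal{U}}J(\Phi)$. The only difference is that you spell out, via the two-term splitting and (C.1), the weak convergence of the composed feedback term that the paper asserts directly from \eqref{2312-25eq4.48} and \eqref{2312-25eq4.50}, which is a useful elaboration of the same argument rather than a different approach.
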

\begin{proof}
  Let us consider a minimizing sequence $(\Phi_n)\subset \mathcal{U}$. Then we have
\begin{equation*}\label{2312-25eq4.47}
  \inf_{\Phi\in  \mathcal{U}} J(\Phi) = \lim_{n\to\infty} J(\Phi_n).
\end{equation*}
Due to Lemma \ref{2212-22lem4.1}, there exist a subsequence $(\Phi_{n_k})$ of $(\Phi_n)$ and an admissible control $\Phi^*$, such that for all $t\in[0,T]$ and $x, y\in \V$, we have
\begin{equation}\label{2312-25eq4.48}
 \lim_{k\to\infty} (\Phi_{n_k}(t,x) , y) = (\Phi^*(t,x), y).
\end{equation}
Due to Theorem \ref{2312-22th4.3}, there exists a subsequence of $(X_{\Phi_{n_k}})$ still denoted by $(X_{\Phi_{n_k}})$ such that for all $t\in[0,T]$,  and $\P$-a.s.
\begin{equation}\label{2312-25eq4.50}
 X_{\Phi_{n_k}}\to  X_{\Phi^*}, ~~  \text{ strong in } \V.
\end{equation}
The relations \eqref{2312-25eq4.48} and \eqref{2312-25eq4.50}, give that for all $t\in[0,T]$ and $\P$-a.s.
\begin{equation*}\label{2312-25eq4.51}
  \Phi_{n_k}(t, X_{\Phi_{n_k}}) \rightharpoonup \Phi^*(t, X_{\Phi^*}), ~~  \text{weakly in } \H.
\end{equation*}
Taking into account the lower semicontinuity assumptions in Hypothesis \ref{2312-22hypo14} and the Fatou's lemma, we deduce
\begin{eqnarray}\label{2312-25eq4.52}
 && \E \int_0^T \Big[f(t, X_{\Phi^*}) + g( \Phi^*(t, X_{\Phi^*})) \Big]\d t  \nonumber\\
&\leq& \E \int_0^T \liminf_{k\to\infty} \Big[f(t, X_{\Phi_{n_k}}) + g( \Phi_{n_k}(t, X_{\Phi_{n_k}}))\Big] \d t  \nonumber\\
&\leq& \liminf_{k\to\infty}  \E \int_0^T \Big[f(t,   X_{\Phi_{n_k}}) + g( \Phi_{n_k}(t, X_{\Phi_{n_k}}))\Big] \d t,
\end{eqnarray}
and
\begin{eqnarray}\label{2312-25eq4.53}
 && \E h(   X_{\Phi^*}(T))  \leq  \E  \liminf_{k\to\infty} h(  X_{\Phi_{n_k}}(T)) \leq \liminf_{k\to\infty} \E h(  X_{\Phi_{n_k}}(T)).
\end{eqnarray}
Hence, it follows from \eqref{2312-25eq4.52} and \eqref{2312-25eq4.53} that
$$J(\Phi^*) = \inf_{\Phi\in \mathcal{U}} J(\Phi),$$
which completes the proof of this theorem.
\end{proof}

\begin{remark}
In our optimal feedback  control problem $(\mathcal{P})$, the cost functional $J$ to be minimized contains the state and control variable in a separated form. However, we can consider a more general form for the cost functional as follows:
\begin{eqnarray}\label{2410-25eq-6.1}
  && J'(\Phi):= \E\bigg[ \int_0^T \Big[ f(s, X_\Phi(s), \Phi(s, X_\Phi(s))) \Big] \d s  + h(X_\Phi(T))  \bigg],
\end{eqnarray}
At this point, we replace Hypothesis \ref{2312-22hypo14} with the following hypothesis
\begin{hypothesis}\label{2410-25hypo-J2}
\begin{itemize}
\item[(F'.1)]   $f: [0,T]\times \V \times \H \to \R_+$ and $h:\V\to \R_+$.
\item[(F'.2)] $u_n\rightharpoonup u$ weakly in $\H$,   $X_n\to X$ strongly in $\V$ imply $f(s,X, u) \leq \lim\limits_{n\to\infty}\inf f(s,X_n, u_n)$, and the integral in \eqref{2410-25eq-6.1} exists and is finite.
\item[(F'.3)]  The function $h$ is sequentially lower semi-continuous.
	\end{itemize}
\end{hypothesis}
Actually, Hypothesis \ref{2410-25hypo-J2} remains the same as  Hypothesis \ref{2312-22hypo14}. 
To get the existence of our optimal feedback controls corresponding to the cost functional $J'(\Phi)$, we have to upgrade the proof in the Theorem \ref{2312-22th4.5}. 
In fact, we only need to modify \eqref{2312-25eq4.52} in the proof of the Theorem \ref{2312-22th4.5} as follows:
Taking into account the lower semicontinuity assumptions in our new Hypothesis \ref{2410-25hypo-J2} and the Fatou's lemma, we deduce
\begin{eqnarray}\label{2410-25eq6.52}
 \E \int_0^T \Big[f(t, X_{\Phi^*}, \Phi^*(t, X_{\Phi^*})) \Big]\d t  
&\leq& \E \int_0^T \liminf_{k\to\infty} \Big[f(t, X_{\Phi_{n_k}}, \Phi_{n_k}(t, X_{\Phi_{n_k}}))\Big] \d t  \nonumber\\
&\leq& \liminf_{k\to\infty}  \E \int_0^T \Big[f(t,   X_{\Phi_{n_k}}, \Phi_{n_k}(t, X_{\Phi_{n_k}}))\Big] \d t.
\end{eqnarray}
Therefore, there exists an optimal feedback control $\Phi^*\in \mathcal{U}$ for the new control problem corresponding to the cost functional $J'(\Phi)$.
\end{remark}

\section{Applications}

The results established in this paper are applicable to many interesting stochastic nonlinear evolution models under our hypotheses such as Hypothesis \ref{2312-22hypo1}. 
 In this section, we will provide a few examples to illustrate our results. In this section, we will list some existing control problem models in previous works that can fall within the framework we have set up, and in Section \ref{2410-25subsection5.2}, we will present an example which can not be covered in the framework previously in the existing literature.  For more example, one can see the book Liu and R\"{o}ckner  (2015).

It should be remarked that all the examples considered in Coayla-Teran (2020)  can be covered by our framework, including the controlled linear stochastic evolution model, the controlled stochastic reaction-diffusion model, the controlled stochastic nonlocal parabolic model, the controlled stochastic semilinear model. 
Lisei (2002) investigated the controlled tow dimensional stochastic Navier-Stokes equation, and demonstrated the existence of optimal controls to the controlled stochastic Navier-Stokes model. In fact, the coefficients of stochastic Navier-Stokes equations satisfy the fully local monotonicity conditions. Our results can cover the existence of optimal controls for stochastic Navier-Stokes equations in Lisei (2002).

\subsection{Controlled quasilinear stochastic partial differential equation}\label{2410-25subsection5.2}

Let $\Omega$ be a bounded domain in $\mathbb{R}^d$ with smooth boundary $\partial\Omega$. 
Set $H := L^2(\Omega)$ and $V := W^{1,\alpha}_0(\Omega)$, the usual Sobolev space with zero trace. Obviously,  we have the Gelfand triple $V \subseteq H \subseteq V^*$ By the Sobolev embedding theorem, and the embedding $V \subseteq H$ is compact. For $u, v \in V$,  we consider the following operator $A$
\begin{equation}\label{2410-25eq5.01}
\langle A(t, u), v \rangle = - \int_{\Omega} \Big[ \sum_{i=1}^{d} a_i(t, x, u(x), \nabla u(x)) \frac{\partial u(x)}{\partial x_i} + a_0(t, x, u(x), \nabla u(x)) v(x) \Big] dx. 
\end{equation}
Where, we assume that $a_i : [0, T] \times \Omega \times \mathbb{R} \times \mathbb{R}^d \rightarrow \mathbb{R}^d$, $i = 0, 1, 2, \ldots, d$, satisfy the following conditions: there exists a constant $\alpha$ satisfies
\begin{equation*}
  \left\{
     \begin{array}{ll}
       \alpha > 1, & \hbox{if $d = 1, 2$,} \\
       \alpha \geq \frac{2d}{d+2}, & \hbox{if $d \geq 3$,}
     \end{array}
   \right.
\end{equation*}
such that, for some nonnegative constants $c_j$, $j=1,2,3,4$ and $c>0$,
\begin{description}
\item[(H1)]   $a_i(t, x, u, z)$ is continuous in $(u, z) \in \mathbb{R} \times \mathbb{R}^d$ for a.e. fixed $(t, x) \in [0, T] \times \Omega$; $a_i(t, x, u, z)$ is measurable with respect to $(t, x) \in [0, T] \times \Omega$ for each fixed $(u, z) \in \mathbb{R} \times \mathbb{R}^d$.

\item[(H2)] There exists a function $f_1 \in L^{\frac{\alpha}{\alpha-1}}([0, T] \times \Omega, \mathbb{R}^+)$ such that for  all $(u, z) \in \mathbb{R} \times \mathbb{R}^d$, $i = 1, \ldots, d$,
\begin{equation}\label{2410-25eq5.02}
|a_i(t, x, u, z)| \leq c_1 |z|^{\alpha-1} + c_2 |u|^{\frac{\alpha-1}{d+2}} + f_1(t, x),  ~~ \text{a.e. for all $(t, x) \in [0, T] \times \Omega$}.  
\end{equation}

\item[(H3)] There exists a function $f_2 \in L^1([0, T] \times \Omega, \mathbb{R}^+)$ such that for  $(u, z) \in \mathbb{R} \times \mathbb{R}^d$,
\begin{equation}\label{2410-25eq5.03}
\sum_{i=1}^{d} a_i(t, x, u, z)z_i + a_0(t, x, u, z)u \geq c_3|z|^\alpha - c_4|u|^2 - f_2(t, x),  ~~ \text{a.e. for all $(t, x) \in [0, T] \times \Omega$}.    
\end{equation}

\item[(H4)]  Let $\gamma \in \big[0, \alpha \left(1 + \frac{2}{d}\right) - 2\big]$ and $f_3 \in L^1([0, T], \mathbb{R}^+)$. There exists a constant $c > 0$ such that for all $u, \tilde{u} \in \mathbb{R}$ and $z, \tilde{z} \in \mathbb{R}^d$,
\begin{eqnarray}\label{2410-25eq5.12}
&&\sum_{i=1}^{d} [a_i(t, x, u, z) - a_i(t, x, \tilde{u}, \tilde{z})](z_i - \tilde{z}_i) + [a_0(t, x, u, z) - a_0(t, x, \tilde{u}, \tilde{z})](u - \tilde{u}) \nonumber\\
&\geq& -c \left( f_3(t) + |u|^\gamma + |\tilde{u}|^\gamma \right) |u - \tilde{u}|^2, ~~~ \text{a.e. for all $(t, x) \in [0, T] \times \Omega$}.  
\end{eqnarray}

\end{description}

We consider the following controlled  quasi-linear stochastic partial differential equation:
\begin{eqnarray}\label{2410-25eq5.06}
\d u_\Phi(t, x) &=& \Big[\nabla \cdot a \left( t, x, u_\Phi(t, x), \nabla u_\Phi(t, x) \right) - a_0 \left( t, x, u_\Phi(t, x), \nabla u_\Phi(t, x) \right) + \Phi(t, u_\Phi(t,x))\Big] \d t  \nonumber\\
&&   + B(t, u_\Phi(t,x)) \d W(t)
\end{eqnarray}
with the zero Dirichlet boundary conditions, where $u : [0, T] \times \Omega \rightarrow \mathbb{R}$, the vectors $a = (a_1, a_2, \ldots, a_d)$ and $\nabla u(t, x) = \left( \frac{\partial_i u(t, x)}{\partial x_i} \right)_{i=1}^d$ is the gradient of $u$ with respect to the spatial variable $x$.

Recall the Gagliardo–Nirenberg inequality for $1 \leq p \leq \infty$,
\begin{equation}\label{2410-25eq5.07}
\|u\|_{L^p(\Omega)} \leq C \|\nabla u\|^\delta_{L^\alpha(\Omega)} \|u\|^{1-\delta}_{L^2(\Omega)}, 
\end{equation}
where $\delta \in [0, 1]$ and $\frac{1}{p} = \frac{\delta}{\alpha} - \frac{1}{d} \delta + \frac{1 - \delta}{2}.$
Then it follows from (H2) and \eqref{2410-25eq5.07} that $A$ is a measurable mapping from $[0, T] \times V$ to $V^*$, and moreover,
\begin{equation}\label{2410-25eq5.09}
\|A(t, u)\|_{V^*} \leq c_1 \|u\|^\alpha_V + c \|u\|^\alpha_V \|u\|^{\frac{2\alpha}{d}}_{H} + F(t),  
\end{equation}
where $F(t) = \int_{\Omega} f_1(t, x)^\frac{\alpha}{\alpha-1} dx$ is integrable on $[0,T]$.
And, it follows from (H4) and \eqref{2410-25eq5.07} that the operator $A$ satisfies
\[
\langle A(t, u) - A(t, v), u - v \rangle \leq \left( f_3(t) + C\|u\|^\gamma_{V} \|u\|^{1-\gamma}_{H} + C\|v\|^\gamma_{V} \|v\|^{1-\gamma}_{H} \right) \|u - v\|^2_{H},
\]
with $\delta = \frac{\alpha d}{\alpha d + 2\alpha - 2d}$. 

In this case, consider our optimal feedback control problem $(\mathcal{P})$, all conditions in Section 2 are satisfied. 
Thus, there is an optimal control $\Phi$ for our optimal feedback control problem $(\mathcal{P})$.


\vspace{0.5cm}

\noindent\textbf{Acknowledgements} This work is supported by Shandong Province Social Science Planning Research Project (No.24CJJJ18).

\addcontentsline{toc}{section}{Declarations}
\section*{Declarations}
\textbf{Conflict of interest}
The authors have no competing interests to declare that are relevant to the content of
this article.

\noindent\textbf{Data}
Data sharing not applicable to this article as no datasets were generated or analysed during the current study.

\addcontentsline{toc}{section}{References}

\end{document}